\providecommand{\U}[1]{\protect\rule{.1in}{.1in}}
\numberwithin{equation}{section}
\numberwithin{equation}{section}
\newtheorem{thm}{Theorem}[section]
\newtheorem{prop}[thm]{Proposition}
\newtheorem{lem}[thm]{Lemma}
\newtheorem{cor}[thm]{Corollary}
\newtheorem{rmk}[thm]{Remark}
\newtheorem{define}[thm]{Definition}
\newtheorem*{lem*}{Lemma}
\def\<{\langle}
\def\>{\rangle}
\def\d{{\rm d}}
\def\div{{\rm div \,}}
\def\E{\mathbb{E}}
\def\P{\mathbb{P}}
\def\R{\mathbb{R}}
\def\eps{\varepsilon}
\title{Well-posedness of stochastic mSQG equations with\\ Kraichnan noise and $L^p$ data}
\author{Shuaijie Jiao\footnotemark[1] \quad Dejun Luo\footnotemark[2]}
\begin{document}
	\begin{sloppypar}

		\maketitle
		\vspace{-20pt}
		\renewcommand{\thefootnote}{\fnsymbol{footnote}}
		\footnotetext[1]{Email: jiaoshuaijie@amss.ac.cn. School of Mathematical Sciences, University of Chinese Academy of Sciences, Beijing 100049, China, and Academy of Mathematics and Systems Science, Chinese Academy of Sciences, Beijing 100190, China}
		
		\footnotetext[2]{Email: luodj@amss.ac.cn. Key Laboratory of RCSDS, Academy of Mathematics and Systems Science, Chinese Academy of Sciences, Beijing 100190, China, and School of Mathematical Sciences, University of Chinese Academy of Sciences, Beijing 100049, China}
		
		\begin{abstract}
			We consider stochastic mSQG (modified Surface Quasi-Geostrophic) equations with multiplicative transport noise of Kraichnan type, and $L^p$-initial conditions. Inspired by the recent work of Coghi and Maurelli [arXiv:2308.03216], we show weak existence and pathwise uniqueness of solutions to the equations for suitable choices of parameters in the nonlinearity, the noise and the integrability of initial data.
		\end{abstract}
		
		\textbf{Keywords:} mSQG equation, Kraichnan noise, regularization by noise, well-posedness
		
\section{Introduction}\label{sec-intro}

We consider the stochastic modified Surface Quasi-Geostrophic (mSQG) equation driven by multiplicative noise of transport type:
\begin{equation}\label{SmSQG}
	\left\{\aligned
	& \d \theta + u\cdot \nabla\theta\,\d t+ \circ\d W\cdot \nabla\theta =0,\\
	& u= \nabla^\perp (-\Delta)^{-\beta/2} \theta,
	\endaligned \right.
\end{equation}
where $\nabla^\perp= (\partial_2, -\partial_1)$, $\beta\in (1,2)$, $\circ\d$ stands for the Stratonovich stochastic differential, and $W=W(t,x)$ is a space-time noise which is white in time, colored and divergence free in space.  The existence of weak solutions to \eqref{SmSQG} with $L^p$-initial data is relatively classical for suitable choices of $\beta$ and $p>1$. In this paper, we are mainly interested in the uniqueness of solutions to \eqref{SmSQG} in the $L^p$-setting. Motivated by Coghi and Maurelli's work \cite{CogMau} on the stochastic 2D Euler equations, we will show that \eqref{SmSQG} enjoys pathwise uniquenss of weak solutions in suitable spaces, provided that the random perturbation $W(t,x)$ is the famous Kraichnan noise (see \cite{Kraichnan, Kra2}), whose covariance matrix $Q(x-y)= \E(W(1,x) \otimes W(1,y))$ is characterized by its Fourier transform:
\begin{equation}\label{Kraichnan-covariance}
	\hat Q(\xi)= (1+|\xi|^2)^{-1-\alpha} \bigg(I_2 - \frac{\xi\otimes \xi}{|\xi|^2} \bigg), \quad \xi\in \R^2,
\end{equation}
where $\alpha\in (0,1)$ and $I_2$ is the $2\times 2$ identity matrix. We remark that, under very general conditions on $Q$ (see e.g. \cite[Section 2.1]{GalLuo23}), the noise admits the decomposition $W(t,x)= \sum_k \sigma_k(x) B^k_t$, where $\{\sigma_k \}_{k\ge 1}$ are divergence free vector fields and $\{B^k \}_{k\ge 1}$ are independent Brownian motions; thus the first equation in \eqref{SmSQG} can be written more precisely as
$$\d \theta + u\cdot \nabla\theta\,\d t+ \sum_k \sigma_k\cdot \nabla\theta\circ\d B^k_t =0. $$

\subsection{Motivations}

The deterministic mSQG equation
\begin{equation}\label{DmSQG}
	\left\{\aligned
	& \partial_t \theta + u\cdot \nabla\theta =0,\\
	& u= \nabla^\perp (-\Delta)^{-\beta/2} \theta
	\endaligned \right.
\end{equation}
serves as a bridge linking the classical 2D Euler equation in vorticity form ($\beta=2$) and the SQG equation ($\beta=1$), the latter being used in meteorology and oceanography to model the temperature $\theta$ in a rapidly rotating stratified fluid with uniform potential vorticity, cf. \cite{HPGS95, Ped87}. In the influential work \cite{CMT94}, Constantin et al. pointed out the structural similarities between the SQG equation and the 3D Euler equation; more precisely, $\nabla^\perp \theta$ satisfies an equation which looks similar to the 3D Euler equation in vorticity form. Let us briefly recall a few well-posedness results for the SQG equation, i.e. \eqref{DmSQG} with $\beta=1$. Resnick proved in his PhD thesis \cite{Res95} the existence of weak solutions to the SQG equation with $L^2$-initial data; the result was extended by Marchand \cite{Mar08} to the $L^p$-setting with $p>4/3$, leaving open the uniqueness of solutions. Using the techniques of convex integration, Buckmaster et al. \cite{BSV19} proved the nonuniqueness of weak solutions to the SQG equation, even in the presence of fractional dissipation term.

The mSQG equation \eqref{DmSQG} was introduced in \cite{CFMR05} to approach the SQG equation, and therein the authors have shown evidence of formation of singularities in finite time. The existence of local strong solutions for smooth initial data in $C^r\, (r>1)$ is known, but the global existence is open. Thanks to the conservation form of \eqref{DmSQG}, the family of equations preserve $L^p$-norm of solutions, at least for smooth solutions. Chae et al. \cite{ChCoWu} introduced more general 2D inviscid models which include the mSQG equation \eqref{DmSQG} and the log-Euler equation. The local existence of vortex patch problem related to \eqref{DmSQG} was studied by Gancedo \cite{Gan08} and Kiselev et al. \cite{KYZ17}, while the validity of point vortex description for mSQG equation was established by Geldhauser and Romito \cite{GelRom20}. In the recent paper \cite{LS21}, the second author and Saal considered the point vortex system associated to the mSQG equation \eqref{DmSQG}, and proved that a certain non-degenerate and space-dependent noise prevents the collapse of vortex system, extending a previous result in \cite{FGP11} on the point vortex system of 2D Euler equations. Flandoli and Saal \cite{FS19} proved the existence of stationary white noise solutions to \eqref{DmSQG} for $\beta\in (1,2)$; Luo and Zhu \cite{LZ21} obtained similar results for stochastic mSQG equations with transport noise and showed, under a suitable scaling of the noise, that the white noise solutions converge weakly to the unique stationary solution of the dissipative mSQG equation driven by space-time white noise; see \cite{FlLu20} for related results for 2D Euler equations.  We refer to the introduction of \cite{FS19} for some other references related to the mSQG equation \eqref{DmSQG}.

The 2D Euler equation in vorticity form (i.e. \eqref{DmSQG} with $\beta=2$)
\begin{equation}\label{2D-Euler}
	\left\{\aligned
	& \partial_t \theta + u\cdot \nabla\theta =0,\\
	& u= \nabla^\perp (-\Delta)^{-1} \theta
	\endaligned \right.
\end{equation}
is a fundamental model in fluid dynamics, where $u$ and $\theta$ now represent the fluid velocity and vorticity, respectively. The classical result of Yudovich \cite{Yud63} asserts that if $\theta_0\in (L^1\cap L^\infty)(\R^2)$, then \eqref{2D-Euler} admits a unique weak solution in $L^\infty([0,T];L^1\cap L^\infty)$. Since then, it has been a challenging open problem in fluid dynamics to show the uniqueness of weak solutions to \eqref{2D-Euler} for $(L^1\cap L^p)$-initial data with $p\in (1,\infty)$. In recent years, however, a number of nonuniqueness results for \eqref{2D-Euler} appeared. Bressan et al. \cite{BreMur, BreShe} proposed a (numerically assisted) scheme for showing the existence of nonunique weak solutions to \eqref{2D-Euler}. By adding a carefully designed forcing term $f$ to \eqref{2D-Euler}, Vishik \cite{Vishik1, Vishik2} constructed nonunique solutions with null initial condition; the construction was revisited by De Lellis and his group in \cite{ABCDGJK} where the authors improved many of the arguments. Albritton et al. \cite{AlBrCo} adapted the ideas in \cite{ABCDGJK} to construct nonunique Leray-Hopf solutions of 3D Navier-Stokes equations with an external force.

On the other hand, the growing theory of regularization by noise demonstrates that the addition of suitable noises can improve the solution theory of many systems, see \cite{FlaBook, Gess} for surveys of some early results and the introductions of \cite{CogMau, GalLuo23} for more references. In the case of fluid dynamics equations like \eqref{2D-Euler}, it is by now generally accepted that the transport noise in Stratonovich form is a physically well motivated random perturbation (cf. \cite{FP22, Holm2015}). Thus, the problem is to search for appropriate space-time noises $W(t,x)= \sum_k \sigma_k(x) B^k_t$ such that the solutions to
\begin{equation}\label{stoch-2D-Euler}
	\left\{\aligned
	& \d \theta + u\cdot \nabla\theta\, \d t + \sum_k \sigma_k\cdot \nabla\theta\circ\d B^k_t =0,\\
	& u= \nabla^\perp (-\Delta)^{-1} \theta
	\endaligned \right.
\end{equation}
are unique in a certain sense. Flandoli \cite{Fla15} proposed a simplified problem in which the nonlinearity is slightly regularized, namely, the relation between $u$ and $\theta$ is replaced by
$$u= \nabla^\perp (-\Delta)^{-1-\eps} \theta,$$
where $\eps>0$ is a given small number. He also discussed possible approaches for solving the problem; a key idea, originally due to P. Malliavin, is to use the Girsanov transform which formally removes the nonlinearity and gives rise to a stochastic linear transport equation. This strategy was rigorously implemented by Galeati and the second author \cite{GalLuo23}, yielding the uniqueness in law of weak solutions to the regularized stochastic 2D Euler equation with any $\eps>0$ and appropriately chosen noise.

In the recent work \cite{CogMau}, Coghi and Maurelli studied the true 2D Euler equation perturbed by Kraichnan transport noise, namely, the noise $W(t,x)$ in \eqref{stoch-2D-Euler} has a covariance matrix function $Q$ characterized by \eqref{Kraichnan-covariance}. Thanks to the noise, for initial data $\theta_0$ in the homogeneous Sobolev space $\dot H^{-1}$, they are able to show existence of weak solutions to \eqref{stoch-2D-Euler} in the space $L^\infty\big([0,T], L^2(\Omega, \dot H^{-1})\big) \cap L^2([0,T]\times \Omega, H^{-\alpha})$, $\Omega$ being the probability space where the noise $W$ lives (possibly different from the original one), and $\alpha\in (0,1)$ is the parameter in \eqref{Kraichnan-covariance}. Furthermore, for initial data in $L^1\cap L^p\cap \dot H^{-1}$ with suitable choices of $p>1$ and $\alpha\in (0,1)$, Coghi and Maurelli successfully proved the pathwise uniqueness of solutions to \eqref{stoch-2D-Euler}, thus obtaining existence of probabilistically strong solutions. Let us briefly discuss the origin of the additional $L^2([0,T]\times \Omega, H^{-\alpha})$-regularity, which is the key ingredient for passing to the limit in the nonlinear terms of approximate equations. Let $(\theta, u)$ be a solution to \eqref{stoch-2D-Euler}. Transforming \eqref{stoch-2D-Euler} in It\^o equations and formally applying the It\^o formula to $\|\theta \|_{\dot H^{-1}}^2= \<\theta, G\ast \theta\>$, where $G$ is the Green function on $\R^2$, one obtains
$$\d \|\theta \|_{\dot H^{-1}}^2 = \d M_t - 2\<G\ast \theta, u \cdot \nabla\theta\>\,\d t + \big\< {\rm tr}[(Q(0)- Q) D^2 G]\ast \theta, \theta\big\>\,\d t, $$
where $\d M_t$ is the martingale part, ${\rm tr}[\cdot]$ means trace of matrices and the last quantity comes from the noise term by integration by parts. It is easy to see that $\<G\ast \theta, u \cdot \nabla\theta\>=0$ after integrating by parts. The key estimate in \cite{CogMau} is that
\begin{equation}\label{key-estimate}
	\big\< {\rm tr}[(Q(0)- Q) D^2 G]\ast \theta, \theta\big\> \le -c \|\theta \|_{H^{-\alpha}}^2 + C \|\theta \|_{\dot H^{-1}}^2
\end{equation}
for some positive constants $c, C>0$. Inserting this estimate into the above identity, taking expectation and applying Gronwall's inequality lead to the desired regularity estimates of the solutions. We emphasize that, in the proof of pathwise uniqueness, the negative quantity in \eqref{key-estimate} is also the key ingredient to cancel the terms arising from the nonlineaity.

Inspired by Coghi and Maurelli's work, we want to study the stochastic mSQG equation \eqref{SmSQG} with Kraichnan transport noise. Our main purpose is to show the pathwise uniqueness of solutions to \eqref{SmSQG} for suitable choices of parameters $\alpha\in (0,1)$, $\beta\in (1,2)$, and $p>1$ which stands for the integrability of initial data. For this purpose, we will first provide an existence result in the next subsection. Before going to the details, we mention a major difference between the nonlinearities of equations \eqref{SmSQG} and \eqref{stoch-2D-Euler}. Indeed, we have $u= \nabla^\perp (-\Delta)^{-1} \theta$ in \eqref{stoch-2D-Euler}, and one easily deduces that
$$u\cdot \nabla\theta = \nabla^\perp \cdot \div(u\otimes u). $$
This identity simplifies the estimates of $u\cdot \nabla\theta$ in many cases, see e.g. the term $\tilde S_1$ on page 37 of \cite{CogMau}. However, we no longer have such simple relation for the nonlinearity in equation \eqref{SmSQG}; this fact has several consequences, for instance, to show the existence of solutions, we need to assume that the initial data is in $L^p$ for some suitable $p>1$.

\subsection{Main results}

First, we give the It\^o formulation of \eqref{SmSQG} with Kraichnan noise $W(t,x)= \sum_k \sigma_k(x) B^k_t$. Thanks to the explicit expression \eqref{Kraichnan-covariance}, one can show that (see \cite[Section 2.3]{CogMau})
$$Q(0)= \E[W(1,x)\otimes W(1,x)] = \sum_k \sigma_k(x) \otimes \sigma_k(x) = \frac\pi{2\alpha} I_2, \quad x\in \R^2. $$
From this, some simple computations lead to the It\^o equation of \eqref{SmSQG}:
\begin{equation}\label{mSQG}
	\left\{\aligned
	& \d \theta + u\cdot \nabla\theta\,\d t+ \sum_k \sigma_k\cdot \nabla\theta\,\d B^k_t = \frac\pi{4\alpha} \Delta\theta\,\d t,\\
	& u= \nabla^\perp (-\Delta)^{-\beta/2} \theta.
	\endaligned \right.
\end{equation}
In the sequel we always work with this formulation. We will often write $u= K_\beta\ast \theta$ where $K_\beta$ is the kernel corresponding to $\nabla^\perp (-\Delta)^{-\beta/2}$; note that $K_\beta$ reduces to the classical Biot-Savart kernel if $\beta=2$.

\begin{define}\label{def-sol}
	Let $\alpha\in (0,1), \beta\in (1,2)$ and $p>1$ be given. A weak solution in $\dot H^{-\frac\beta 2} \cap L^p$ to \eqref{mSQG} is a collection of objects $\big(\Omega, \mathcal F, (\mathcal F_t)_t, \P, (B^k)_k, \theta \big)$ where $(\Omega, \mathcal F, (\mathcal F_t)_t, \P)$ is a filtered probability space satisfying the usual conditions, $(B^k)_k$ are independent real Brownian motions, $\theta:[0,T]\times \Omega\to \big(\dot H^{-\frac\beta 2} \cap L^p \big)(\R^2)$ is an $(\mathcal F_t)_t$-progressively measurable process satisfying
	\begin{equation}\label{path-bd}
		\P\text{-a.s.}, \quad \theta\in L^\infty \big([0,T], \dot H^{-\frac\beta 2}\big) \cap C\big([0,T], H^{-4} \big), \quad \sup_{t\in [0,T]} \|\theta_t \|_{L^p} \le \|\theta_0 \|_{L^p},
	\end{equation}
	and it holds
	\begin{equation}\label{weak-solution}
		\theta_t= \theta_0 -\int_0^t\div(u_r\theta_r)\,\d r - \sum_k \int_0^t \div(\sigma_k \theta_r)\, \d B^k_r + \frac\pi{4\alpha} \int_0^t \Delta\theta_r\,\d r, \quad \forall\, t\in [0,T]
	\end{equation}
	as an equality in $H^{-4}$.
\end{define}

The following result gives the existence of weak solutions to \eqref{mSQG}.

\begin{thm}\label{exist}
	Let $\alpha\in (0,1)$, $\beta\in (1,2)$ satisfy $1-\frac{\beta}{2}<\alpha< \frac{\beta}{2}$, and $\max \big\{\frac{2}{1+\beta/2 -\alpha}, \frac4{\beta+1} \big\} <p \le 2$, then for any $\theta_0\in \dot H^{-\frac\beta 2} \cap L^p$, the stochastic mSQG equation \eqref{mSQG} admits a weak solution in the sense of Definition \ref{def-sol}, satisfying in addition
	\begin{equation}\label{exist-bound}
		\sup_{t\in [0,T]} \E\Big[\|\theta _t\|_{\dot H^{-\frac\beta2}}^2 \Big] + \int_0^T \E\Big[\|\theta _t\|_{H^{-\frac\beta2 +1-\alpha}}^2 \Big]\,\d t<+\infty.
	\end{equation}
\end{thm}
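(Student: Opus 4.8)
The plan is to use the stochastic compactness method: regularize \eqref{mSQG} to obtain globally well-posed approximations, derive uniform bounds matching \eqref{exist-bound}, prove tightness of the laws, and pass to the limit through a Skorokhod representation. Concretely, I would first regularize the nonlinearity and truncate the noise to finitely many modes $\sigma_1,\dots,\sigma_n$, arranging the approximation so that for each $n$ the system
$$ \d\theta^n + \div(u^n\theta^n)\,\d t + \sum_{k\le n}\div(\sigma_k\theta^n)\,\d B^k_t = \tfrac{\pi}{4\alpha}\Delta\theta^n\,\d t, \qquad u^n = \nabla^\perp(-\Delta)^{-\beta/2}\theta^n, $$
admits a unique smooth global solution $\theta^n$ by classical SPDE theory, while preserving the divergence-free transport structure.

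The heart of the argument is a pair of uniform a priori estimates. First, in Stratonovich form the transport by the (mollified) divergence-free fields conserves all $L^p$ norms pathwise --- equivalently, in the It\^o form the dissipation from $\tfrac{\pi}{4\alpha}\Delta$ is exactly balanced by the quadratic variation of the noise, since $\sum_k\sigma_k\otimes\sigma_k=\tfrac{\pi}{2\alpha}I_2$ --- so that $\sup_{t\le T}\|\theta^n_t\|_{L^p}\le\|\theta_0\|_{L^p}$, which will furnish the $L^p$ bound in \eqref{path-bd}. Second, I would apply It\^o's formula to $\|\theta^n\|_{\dot H^{-\beta/2}}^2=\langle\theta^n,(-\Delta)^{-\beta/2}\theta^n\rangle$. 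Writing $\psi^n=(-\Delta)^{-\beta/2}\theta^n$ and integrating by parts, the nonlinear contribution $2\langle\nabla\psi^n\cdot u^n,\theta^n\rangle$ vanishes because $u^n=\nabla^\perp\psi^n$ is pointwise orthogonal to $\nabla\psi^n$. The noise correction together with the dissipative drift combine, after integration by parts, into a trace term $\langle\operatorname{tr}[(Q(0)-Q)D^2G_\beta]*\theta^n,\theta^n\rangle$, with $G_\beta$ the kernel of $(-\Delta)^{-\beta/2}$. The key step is the mSQG analogue of \eqref{key-estimate}, namely
$$ \big\langle\operatorname{tr}[(Q(0)-Q)D^2G_\beta]*\theta,\theta\big\rangle \le -c\,\|\theta\|_{H^{-\beta/2+1-\alpha}}^2 + C\,\|\theta\|_{\dot H^{-\beta/2}}^2, $$
which I expect to hold precisely under $1-\tfrac\beta2<\alpha<\tfrac\beta2$: this range makes the gained exponent $-\tfrac\beta2+1-\alpha$ negative yet strictly above $-\tfrac\beta2$ (for $\beta=2$ it recovers the $\dot H^{-1}\!\to H^{-\alpha}$ gain of \cite{CogMau}). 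Inserting this, taking expectations to drop the martingale, and invoking Gronwall yields \eqref{exist-bound} uniformly in $n$.

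With these bounds I would establish tightness of the laws of $(\theta^n)$ on a path space such as $C([0,T];H^{-4})\cap L^2([0,T];H^{-\beta/2+1-\alpha}_{\mathrm{loc}})$ with suitable weak topologies. Uniform control of the time increments comes from the equation itself --- the drift and the stochastic integral give fractional-in-time H\"older/Sobolev bounds in a negative space via the Burkholder--Davis--Gundy inequality --- and, together with the compact embedding $H^{-\beta/2+1-\alpha}\hookrightarrow H^{-\beta/2}$ on bounded sets, this provides an Aubin--Lions type compactness. The Jakubowski--Skorokhod theorem then yields a new probability space carrying $\tilde\theta^n\to\tilde\theta$ almost surely in the path topology together with $\tilde B^{k,n}\to\tilde B^k$, all with the prescribed laws. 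The linear terms and the martingale pass to the limit, while the $L^p$ bound, the path regularity \eqref{path-bd} and the estimate \eqref{exist-bound} survive by lower semicontinuity of the norms; progressive measurability and the weak formulation \eqref{weak-solution} in $H^{-4}$ are then verified on the new space.

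I expect the main obstacle to be the passage to the limit in the nonlinear term $\div(u^n\theta^n)$. Unlike the 2D Euler case there is no identity $u\cdot\nabla\theta=\nabla^\perp\cdot\div(u\otimes u)$, so the nonlinearity cannot be recast in a convenient divergence form and genuine compactness is required. The extra regularity in \eqref{exist-bound} upgrades the weak convergence of $\tilde\theta^n$ to strong convergence in $L^2([0,T];H^{-\beta/2}_{\mathrm{loc}})$, and since $u^n=K_\beta*\tilde\theta^n$ gains $\beta-1$ derivatives, interpolation against the uniform $L^p$ bound lets me identify the limit of the product $u^n\tilde\theta^n$. This is exactly where the thresholds on $p$ enter: $p>\tfrac4{\beta+1}$ makes $u^n\tilde\theta^n$ a well-defined, uniformly integrable product via Hardy--Littlewood--Sobolev bounds on $K_\beta*\theta$, while $p>\tfrac2{1+\beta/2-\alpha}$ secures the interpolation and time-regularity needed for the compactness. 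Verifying these integrability thresholds, and establishing the kernel estimate for $G_\beta$ above, constitute the technical core of the proof.
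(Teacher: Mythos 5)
Your proposal has the same skeleton as the paper's proof: regularized approximations, the pathwise $L^p$ bound, an It\^o computation for a negative-order energy whose coercive part is the Kraichnan trace estimate (your displayed inequality corresponds to the paper's Lemmas \ref{A-bound} and \ref{extrareg}), Gronwall, and then tightness, Skorohod representation and passage to the limit. (A minor misattribution: that estimate needs only $2\alpha+\beta>2$; the upper bound $\alpha<\frac\beta2$ enters instead through the compact embedding $H^{\frac\beta2-\alpha}\hookrightarrow L^2_{loc}$ for the velocity, cf.\ Remark \ref{rem-existence}.) Two of your steps, however, contain genuine gaps.

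First, the cancellation in the energy estimate is inconsistent with your approximation. For the approximate equations to be globally well-posed you must mollify the nonlinearity (as you say yourself), but then $u^n$ is no longer $\nabla^\perp$ of the potential $\psi^n=(-\Delta)^{-\beta/2}\theta^n$ occurring in your energy, and $\langle\nabla\psi^n\cdot u^n,\theta^n\rangle\neq 0$ in general; if instead you keep the exact kernel (as in your displayed system), the cancellation holds but global solvability of the approximation with merely $L^p\cap\dot H^{-\frac\beta2}$ data is unjustified --- the truncated noise together with the It\^o Laplacian leaves only a degenerate elliptic correction, so this is essentially the original open problem. Moreover, It\^o's formula cannot be applied directly to $\Vert\cdot\Vert_{\dot H^{-\beta/2}}^2$ on the state space $H^{-4}$, where this functional is not even finite. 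The paper resolves all of this at once by mollifying the kernel into $G_\beta^\delta$ and using the \emph{same} kernel in the equation ($K_\beta^\delta=\nabla^\perp G_\beta^\delta$ in \eqref{reg-mSQG}) and in the energy functional $\langle\theta^\delta,G_\beta^\delta\ast\theta^\delta\rangle$ (Lemma \ref{Ito}), so the nonlinear term vanishes identically; the true norm is then recovered through Lemma \ref{approx-energy} and the remainder bounds of Lemmas \ref{R1R2-bound} and \ref{R3-bound}, all of which require the smoothed initial data $\theta_0^\delta\in L^1\cap L^\infty$ with the moment and smallness conditions \eqref{condition1}, \eqref{condition3} --- a regularization absent from your scheme but needed to close the estimate.

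Second, your identification of the limit of the nonlinear term rests on a step that fails. From tightness of $(\theta^n)$ in local negative Sobolev spaces you infer convergence of $u^n=K_\beta\ast\theta^n$ because ``$K_\beta\ast$ gains $\beta-1$ derivatives''. The gain of derivatives holds only in global Sobolev scales: $K_\beta\ast$ is nonlocal, so strong convergence of $\theta^n$ on compact sets gives no local control of $u^n$, whose values on $B_R$ depend on $\theta^n$ everywhere. This is exactly the obstruction the paper emphasizes in \textbf{Step 3} (in the mirrored direction): there, the velocity $u^\delta$ is taken as the primary tightness variable, in the weighted space $L_t^2(L^2(\R^2;w))$ via Lemma \ref{Aubin-Lions}, and since strong convergence of $u^{\delta_n}$ cannot be transferred back to strong convergence of $\theta^{\delta_n}$, the term $A_{23}$ is only shown to converge \emph{weakly} in $L^2(\Omega\times[0,T])$, the limit equation being identified by testing against $X\phi(t)$ with $X\in L^\infty(\Omega)$, $\phi\in C_c^\infty([0,T])$. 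To repair your argument you must either carry the velocities along in the tightness (jointly with $\theta^n$, splitting the product as in the paper's $A_{21}$, $A_{22}$, $A_{23}$) or reproduce this weak-convergence identification; as written, ``interpolation against the uniform $L^p$ bound'' does not bridge the nonlocality.
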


The proof of this result follows the classical compactness argument: we approximate the initial condition, the nonlinearity and the noise with smooth objects, prove uniform estimates on the solutions to approximate equations, which imply compactness of their laws in suitable spaces, then we apply the Skorohod representation theorem to get almost sure convergent sequence on some new probability space, finally we show that the limit is a weak solution to the stochastic mSQG equation. Let us briefly explain the roles of our conditions.

\begin{rmk}\label{rem-existence}
	\begin{itemize}
		\item[(a)] Note that $\beta >2\alpha$ implies $\frac{2}{1+\beta/2 -\alpha}<2$, and $\beta>1$ implies $\frac4{\beta+1}<2$, so the range of $p$ is not empty. Next, the condition $2\alpha+ \beta>2$ will be used in Lemma \ref{extrareg} below to yield the $ L^2\big([0,T]\times \Omega, H^{-\frac\beta2 +1-\alpha}\big)$-regularity of solutions.
		
		\item[(b)] Uniform estimate like \eqref{exist-bound} on the approximate solutions $\{\theta^\delta\}_{\delta>0}$ implies that $\{ u^\delta= K_\beta\ast \theta^\delta \}_{\delta>0}$ are bounded in $ L^2\big([0,T]\times \Omega, H^{\frac\beta2-\alpha}\big) $, and the condition $\beta >2\alpha$ ensures that the space $H^{\frac\beta2-\alpha}$ is compactly embedded in $L^2_{loc}$; these facts play important roles in passing to the limit of nonlinear terms in approximate equations.
		
		\item[(c)] The condition $p> \frac4{\beta+1}$ will be used to establish time continuity estimate of the nonlinear parts of approximate equations, see the beginning of the proof of Lemma \ref{time-continuity}. On the other hand, the condition $p> \frac{2}{1+\beta/2 -\alpha}$ ensures the convergence of $A_{22}$ which comes from the nonlinearities, see \eqref{A22} in \textbf{Step 3} of the proof of Theorem \ref{exist}.
	\end{itemize}
\end{rmk}

Now we state the main theorem of our paper which implies pathwise uniqueness of solutions to \eqref{mSQG} for suitable choices of parameters. We write $a\wedge b =\min\{a, b\}$ for $a,b\in \R$.

\begin{thm}\label{unique}
	Let $\alpha\in (0,\frac{1}{2})$, $\beta\in (\frac{3}{2},2)$ satisfy $\frac2{p\wedge 2} -\frac\beta2  <\alpha < \frac\beta2 -\frac1{p\wedge 2}$ for some $p> \frac3{\beta}$. Assume $\theta_0\in L^1 \cap L^p$, then the following assertions hold.
	\begin{itemize}
		\item[(1)] There exists a weak solution in the sense of Definition \ref{def-sol} to the stochastic mSQG equation \eqref{mSQG}, satisfying \eqref{exist-bound} and
		\begin{equation}\label{path-bd2}
			\P\mbox{-a.s.}, \quad \sup_{t\in[0,T]} (\|\theta_t \|_{L^1}+ \|\theta_t \|_{L^p}) \le \|\theta_0 \|_{L^1}+ \|\theta_0 \|_{L^p} .
		\end{equation}
		\item[(2)] Pathwise uniqueness holds for weak solutions of \eqref{mSQG} in the space
		$$\mathcal X:= L^\infty\big(\Omega\times [0,T], L^1 \cap L^p \big) .$$
		More precisely, if $\theta^1,\, \theta^2\in \mathcal X$ are two weak solutions to \eqref{mSQG} in the sense of Definition \ref{def-sol}, with the same initial condition $\theta_0$ and the same sequence of Brownian motions $(B^k)_k$ on the common filtered probability space $(\Omega, \mathcal F, (\mathcal F_t)_t, \P)$, then $\P$-a.s. for all $t\in [0,T]$, $\theta^1_t= \theta^2_t$.
	\end{itemize}
\end{thm}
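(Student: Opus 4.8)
Under the standing hypotheses one checks that, with integrability exponent $q:=p\wedge 2\le 2$, all the conditions of Theorem \ref{exist} are met: from $q\le 2$ one gets $\frac2q-\frac\beta2\ge 1-\frac\beta2$, so $1-\frac\beta2<\alpha$, while $\alpha<\frac\beta2-\frac1q<\frac\beta2$, and the inequalities $q>\frac2{1+\beta/2-\alpha}$ and $q>\frac4{\beta+1}$ follow from $q\ge 1$ and $q>3/\beta$ respectively. Since $\theta_0\in L^1\cap L^p\subset \dot H^{-\frac\beta2}\cap L^{q}$, Theorem \ref{exist} produces a weak solution satisfying \eqref{exist-bound}. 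The extra $L^1$-control in \eqref{path-bd2} is not asserted by Theorem \ref{exist}, but it is inherited along the smooth approximations used in its proof: the transport terms (with divergence free $u$ and $\sigma_k$) preserve every $L^r$-norm and $e^{t\frac\pi{4\alpha}\Delta}$ is an $L^r$-contraction, so the approximants obey \eqref{path-bd2}, and the bound passes to the limit by lower semicontinuity.

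\textbf{Part (2).} Let $\theta^1,\theta^2\in\mathcal X$ be two solutions with common data $\theta_0$ and noise, put $\theta=\theta^1-\theta^2$ and $u=K_\beta\ast\theta$. Using $u^1\cdot\nabla\theta^1-u^2\cdot\nabla\theta^2=\div(u^1\theta)+\div(u\theta^2)$, subtracting the two copies of \eqref{weak-solution} shows that $\theta$ solves
$$\d\theta+\big[\div(u^1\theta)+\div(u\theta^2)\big]\,\d t+\sum_k\div(\sigma_k\theta)\,\d B^k_t=\tfrac\pi{4\alpha}\Delta\theta\,\d t,\qquad \theta_0=0 ,$$
a linear equation in the unknown $\theta$ with $\theta^1,\theta^2$ as given coefficients. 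Following \cite{CogMau}, I would run the energy estimate on $\|\theta_t\|_{\dot H^{-\beta/2}}^2=\big\langle\theta_t,(-\Delta)^{-\beta/2}\theta_t\big\rangle$. Applying the It\^o formula --- made rigorous by mollifying $\theta$, controlling the resulting transport commutators in the spirit of DiPerna--Lions and sending the mollification parameter to $0$, which is legitimate thanks to the regularity $\theta\in L^\infty([0,T],\dot H^{-\frac\beta2})\cap L^2([0,T]\times\Omega,H^{-\frac\beta2+1-\alpha})$ from \eqref{exist-bound} --- the martingale term has zero mean, and the It\^o correction of the noise combines with $\frac\pi{4\alpha}\Delta\theta$ into $\big\langle{\rm tr}[(Q(0)-Q)D^2G_\beta]\ast\theta,\theta\big\rangle$, where $G_\beta$ is the kernel of $(-\Delta)^{-\beta/2}$. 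The mSQG analogue of \eqref{key-estimate}, supplied by Lemma \ref{extrareg}, bounds this quantity by $-c\|\theta\|_{H^{-\beta/2+1-\alpha}}^2+C\|\theta\|_{\dot H^{-\beta/2}}^2$. Hence, writing $I_1,I_2$ for the two nonlinear contributions, which after integration by parts are (up to sign) $\big\langle\nabla(-\Delta)^{-\beta/2}\theta,u^1\theta\big\rangle$ and $\big\langle\nabla(-\Delta)^{-\beta/2}\theta,u\theta^2\big\rangle$,
$$\frac{\d}{\d t}\,\E\|\theta_t\|_{\dot H^{-\beta/2}}^2\le -c\,\E\|\theta_t\|_{H^{-\beta/2+1-\alpha}}^2+C\,\E\|\theta_t\|_{\dot H^{-\beta/2}}^2+2\,\E\big(|I_1|+|I_2|\big).$$

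The decisive step is to absorb the nonlinearity, i.e. to establish
$$|I_1|+|I_2|\le \tfrac c4\,\|\theta\|_{H^{-\beta/2+1-\alpha}}^2+C\big(\|\theta^1\|_{L^p}\vee\|\theta^2\|_{L^p}\big)\,\|\theta\|_{\dot H^{-\beta/2}}^2 .$$
This is exactly where the lack of the $\nabla^\perp\cdot\div(u\otimes u)$ structure of the 2D Euler case is felt, so I would not imitate \cite{CogMau} directly; instead I would bound $I_1,I_2$ by fractional product (Kato--Ponce / paraproduct) estimates, distributing the derivatives so that $\theta$ is only ever required at the regularity $H^{-\beta/2+1-\alpha}$ that it actually enjoys, placing the coefficient fields $u^1=\nabla^\perp(-\Delta)^{-\beta/2}\theta^1$ and $\theta^2$ in the Lebesgue/Sobolev spaces furnished by the uniform $L^1\cap L^p$-bound \eqref{path-bd2}, and interpolating the remaining regularity against $\|\theta\|_{\dot H^{-\beta/2}}$. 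The two-sided window $\frac2{p\wedge2}-\frac\beta2<\alpha<\frac\beta2-\frac1{p\wedge2}$ together with $p>3/\beta$ is precisely the condition under which the relevant Sobolev exponents close up: the upper bound on $\alpha$ keeps the regularity demanded of $\theta$ no larger than the gain $-\frac\beta2+1-\alpha$, and the lower bound makes that gain big enough to dominate. I expect $I_2$, which is genuinely quadratic in the difference through $u=K_\beta\ast\theta$ and carries only the rough $L^p$-coefficient $\theta^2$, to be the main obstacle. Granting this estimate, the inequality collapses to $\frac{\d}{\d t}\E\|\theta_t\|_{\dot H^{-\beta/2}}^2\le C\,\E\|\theta_t\|_{\dot H^{-\beta/2}}^2$, and Gronwall with $\theta_0=0$ forces $\E\|\theta_t\|_{\dot H^{-\beta/2}}^2\equiv 0$; combined with the path continuity in \eqref{path-bd} this gives $\theta^1_t=\theta^2_t$ $\P$-a.s. for all $t\in[0,T]$.
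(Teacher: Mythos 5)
There is a genuine gap at the decisive step of Part (2). Your argument hinges on the absorption estimate
$|I_1|+|I_2|\le \tfrac c4\|\theta\|_{H^{-\beta/2+1-\alpha}}^2+C\|\theta\|_{\dot H^{-\beta/2}}^2$,
but you do not prove it: you outline a Kato--Ponce/paraproduct strategy, identify $I_2$ as ``the main obstacle'', and then continue by ``granting this estimate''. That estimate is the entire content of the uniqueness proof, so the proposal is incomplete exactly where it matters. The idea you are missing is that $I_2$ needs almost no estimate at all: since $K_\beta=\nabla^\perp G_\beta$, one has the pointwise orthogonality
\begin{equation*}
(\nabla G_\beta\ast\theta)\cdot(K_\beta\ast\theta)
=\nabla(G_\beta\ast\theta)\cdot\nabla^\perp(G_\beta\ast\theta)=0,
\qquad\text{hence}\qquad
\langle \nabla G_\beta\ast\theta,\,(K_\beta\ast\theta)\,\theta^2\rangle=0 .
\end{equation*}
This is how the paper treats $I_2$: the It\^o computation is run on the regularized functional $\langle\theta,G_\beta^\delta\ast\theta\rangle$ (a smooth functional on $H^{-4}$, so It\^o is legitimate without any commutator analysis), the exact part of $I_2$ vanishes by the cancellation above, and only the error $\langle\nabla(G_\beta^\delta-G_\beta)\ast\theta,(K_\beta\ast\theta)\theta^2\rangle$ survives, which is shown to be $o(1)$ as $\delta\to0$ via Lemma \ref{harmonic}, Sobolev embedding and dominated convergence. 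Note also that your diagnosis is inverted: this perpendicular-gradient cancellation is the same one as in 2D Euler, so on this point the paper \emph{does} follow \cite{CogMau}; the missing identity $u\cdot\nabla\theta=\nabla^\perp\cdot\div(u\otimes u)$ is felt in the existence part, not here.

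Two further issues. First, your plan to justify It\^o on $\|\theta\|_{\dot H^{-\beta/2}}^2$ by mollifying $\theta$ and controlling DiPerna--Lions-type commutators (in a stochastic transport setting) is itself a nontrivial unproven step; the paper's device of mollifying the kernel instead, with the quantitative error bounds of Lemma \ref{errer-kernel}, is what simultaneously legitimizes It\^o and produces the tractable $\delta$-error in $I_2$. (Your paraproduct bound for $I_2$ is not obviously impossible --- the exponent bookkeeping closes precisely under $\alpha<\frac\beta2-\frac1{p\wedge2}$ --- but you give no proof, and the cancellation makes it unnecessary.) Relatedly, Lemma \ref{extrareg} as stated concerns $Q^\delta$, $G_\beta^\delta$ and the bounded regularized solutions $\theta^\delta$; for the difference of two $\mathcal X$-solutions the paper must redo the decomposition (terms $A$, $R_1$, $R_3$ with $Q$ and $G_\beta^\delta$) and estimate the remainder $R_1$ against $\|\theta\|_{L^{p\wedge2}}^2$, and it is exactly there that the lower bound $\alpha>\frac2{p\wedge2}-\frac\beta2$ enters --- a point your sketch leaves implicit. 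Second, a slip in Part (1): $q>\frac2{1+\beta/2-\alpha}$ does not follow from $q\ge1$ (the right-hand side always exceeds $1$ because $\beta<2$); the correct chain is $q>\frac3\beta>\frac4{\beta+1}>\frac2{1+\beta/2-\alpha}$, the last inequality using $\alpha<\frac12$, as in Remark \ref{condition-unique}. Part (1) otherwise matches the paper (Theorem \ref{exist} plus the Step 2 argument for \eqref{path-bd2}).
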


\begin{rmk}\label{condition-unique}
	The conditions $\alpha\in (0,\frac{1}{2})$ and $\beta\in (\frac{3}{2},2)$ imply $\frac{3}{\beta}>\frac{4}{\beta+1}>\frac{2}{1+\beta/2 -\alpha}$.
	Recall the Sobolev embedding in dimension 2: $L^q \subset \dot H^{1-\frac2q}$ for any $1< q\le 2$, see e.g. \cite[Corollary 1.39]{Bahouri2011FourierAA}. By the restriction $p>\frac{3}{\beta}$, one easily sees that $L^1 \cap L^p \subset \dot H^{-\frac\beta2}$ and the class $\mathcal X$ is contained in $L^\infty\big([0,T], L^2(\Omega, \dot H^{-\frac\beta2}) \big)$, hence all conditions in Theorem \ref{exist} are satisfied here. In the same way, the condition $\alpha>\frac2p -\frac\beta2$, or equivalently $p> \frac{2}{\beta/2 +\alpha}$, implies that the class $\mathcal X$ is contained in $L^2\big(\Omega\times [0,T], H^{-\frac\beta2 +1-\alpha}\big)$.
\end{rmk}

\begin{rmk}\label{rem-uniqueness-2}
    The results in Theorem \ref{unique} can be understood as follows: for every fixed $\beta\in (\frac{3}{2},2)$, assume that the initial datum $\theta_0$ belongs to the space $L^1\cap L^p$ with $p>\frac{3}{\beta}$, then we can find a special multiplicative transport noise, namely the Kraichnan noise with some suitable index $\alpha$, such that the 2D stochastic mSQG equation \eqref{mSQG} admits a pathwise unique solution. Such a uniqueness result is not known for the deterministic mSQG equation \eqref{DmSQG}, thus the addition of noise improves the solution theory for this equation.
	
\end{rmk}

Almost at the same time of our work, Bagnara, Galeati and Maurelli \cite{BGM24} studied the same regularization problem. They assume $p\ge 2$ and are able to treat the full range of $\beta\in (1,2)$ (note that $\beta$ here is related to the parameter in their paper by the relation $\beta\mapsto 2-\beta$); for $\beta$ close to 1, the parameter $p$ has to be big enough.

We finish the short introduction with the structure of the paper. In Section \ref{sec-prelim} we make some preparations for later use, in particular, we introduce smooth approximations of the Kraichnan covariance function $Q$, the Riesz kernel $G_\beta$ and the initial data $\theta_0\in \dot H^{-\frac\beta2}$. Then we consider in Section \ref{sec-a-priori} the approximate equations and prove several a priori estimates on their solutions. Theorem \ref{exist} will be proved in Section \ref{weak-exist} by using the classical compactness approach. Finally, we prove Theorem \ref{unique} in Section \ref{sec-uniqueness} by following the method of Coghi and Maurelli \cite{CogMau}.

\section{Preliminaries}\label{sec-prelim}
In this section, we present essential prerequisites used in the sequel, including some properties of the covariance of the noise and the kernel related to the nonlinear part in the mSQG equation \eqref{mSQG}. Then we follow the ideas in \cite[Section 3]{CogMau} to smooth the irregular elements in \eqref{mSQG}.

To begin with, we introduce some notations that are used throughout the paper.
Let $(\Omega,\mathcal{F},(\mathcal{F}_t)_t,\P)$ be a filterd probability space which satisfies the usual conditions and $\mathbb{E}$ denotes the expectation.
For $x = (x^1,x^2)\in \mathbb{R}^2$, we define $x^{\perp} = (x^2,-x^1)$ as the vector obtained by rotating $x$ clockwise 90 degrees and define $\langle x \rangle=(1+\lvert x \rvert^2)^{\frac{1}{2}}$. Write the open (resp. closed) ball of center $x$ and radius $R$ as $B_R(x)$ (resp. $\bar{B}_R(x)).$

Throughout this paper, the notation $\langle\cdot,\cdot\rangle$ stands both for the scalar product in a Hilbert space and the pairing between a space and its dual. For a tempered distribution $u\in \mathcal{S}', \hat{u}$ denotes its Fourier transform. For $s\in \mathbb{R}$, let $\dot{H}^s(\mathbb{R}^2)\,(\text{resp. }H^s(\mathbb{R}^2))$ denote the usual homogeneous (resp. inhomogeneous) Sobolev spaces, see for instance \cite[Chapter 1]{Bahouri2011FourierAA}.
For $\gamma\in (0,1)$ and a Banach space $B$, we use the notation
\begin{equation*}
	\Vert f \Vert_{C_t^{\gamma}(B)}
	= \sup\limits_{t\in [0,T]} \Vert f(t) \Vert_B
	+ \sup\limits_{0\leq s\neq t \leq T}
	\dfrac{\Vert f(t)-f(s) \Vert_B}{\lvert t-s \rvert^\gamma},
\end{equation*}
\noindent to denote the norm of $\gamma$-H{\"o}lder continuous functions.

The symbol $f\lesssim g$ for two functions $f$ and $g$ means that there exists a positive constant $C$ such that $f(x)\leq Cg(x)$ for all $x$. We use the symbol $f\asymp g$ to stand for $f\lesssim g$ and $g\lesssim f$.





\subsection{Mollifying the covariance $Q$}

Take 
$0<\alpha<1$, recall that the Kraichnan covariance $Q:\mathbb{R}^2\rightarrow \mathbb{R}^{2\times2}$ is determined by its Fourier transform
\begin{equation*}
	\hat{Q}(\xi)=\langle \xi\rangle^{-(2+2\alpha)}\bigg(I_2-\dfrac{\xi\otimes \xi}{\lvert \xi\rvert^2 }\bigg),\quad \xi\in \mathbb{R}^2.
\end{equation*}
The following properties about the covariance matrix are known, see e.g. \cite[Section 2]{CogMau} and \cite[Section 10]{LeJan-Raim02}.
\begin{lem}
	There exists a sequence of divergence-free vector fields $\sigma_k\in H^{1+\alpha}(\mathbb{R}^2,\mathbb{R}^2),k\in \mathbb{N}$, such that
	\begin{equation*}
		Q(x,y)=Q(x-y)=\sum_{k}\sigma_k(x)\otimes\sigma_k(y),\quad x,y\in \mathbb{R}^2
	\end{equation*}
	(the series converge absolutely for $x$ and $y$) and
	\begin{equation}\label{Q-converge}
		\sup\limits_{x,y\in \mathbb{R}^2}
		\sum_{k}\vert \sigma_k(x) \vert \, \vert \sigma_k(y) \vert\leq
		\sup\limits_{x\in \mathbb{R}^2}
		\sum_{k}\vert \sigma_k(x) \vert^2
		< \infty.
	\end{equation}
\end{lem}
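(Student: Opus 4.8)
The plan is to construct the fields $\sigma_k$ explicitly from the spectral representation of $Q$, using that in two dimensions the projection appearing in \eqref{Kraichnan-covariance} has rank one. First I would factor the Fourier symbol. Since $\{\xi/|\xi|,\,\xi^\perp/|\xi|\}$ is an orthonormal basis of $\R^2$, one has $I_2-\xi\otimes\xi/|\xi|^2=(\xi^\perp/|\xi|)\otimes(\xi^\perp/|\xi|)$, so that putting
\[
f(\xi):=\langle\xi\rangle^{-(1+\alpha)}\,\frac{\xi^\perp}{|\xi|}\in\R^2
\]
gives $\hat Q(\xi)=f(\xi)\otimes f(\xi)$. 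Two features of $f$ drive the entire argument: first $\xi\cdot f(\xi)=0$ (because $\xi\perp\xi^\perp$), which will yield the divergence-free property; second $|f(\xi)|^2=\langle\xi\rangle^{-(2+2\alpha)}$, so that $f\in L^2(\R^2;\R^2)$ \emph{precisely} because $\alpha>0$, and moreover $\langle\xi\rangle^{2(1+\alpha)}|f(\xi)|^2\equiv1$, which gives the $H^{1+\alpha}$-regularity essentially for free.

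Next I would turn the continuous spectral representation into a discrete series. Writing $Q(x-y)$ (up to a normalization constant) as $\int_{\R^2}e^{i\xi\cdot(x-y)}f(\xi)\otimes f(\xi)\,\d\xi$ and recognizing the right-hand side component-wise, after using that $f$ is real, as the $L^2(\R^2;\C)$ inner product of the functions $\xi\mapsto e^{i\xi\cdot x}f_j(\xi)$ and $\xi\mapsto e^{i\xi\cdot y}f_l(\xi)$, I fix any orthonormal basis $\{\phi_k\}_k$ of $L^2(\R^2;\C)$ and expand. Setting $\tilde\sigma_k(x):=\int e^{i\xi\cdot x}f(\xi)\overline{\phi_k(\xi)}\,\d\xi$ (a well-defined bounded continuous $\C^2$-valued function, since $f\,\overline{\phi_k}\in L^1$ by Cauchy--Schwarz) and applying Parseval in the index $k$ gives $Q(x-y)=\sum_k\tilde\sigma_k(x)\otimes\overline{\tilde\sigma_k(y)}$. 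Because $Q$ is real, taking the real part and writing $\tilde\sigma_k=a_k+ib_k$ collapses this to $\sum_k\big[a_k(x)\otimes a_k(y)+b_k(x)\otimes b_k(y)\big]$; relabelling all the real and imaginary parts into a single sequence $\{\sigma_k\}$ produces the claimed decomposition with genuinely real fields. Each $\sigma_k$ is divergence-free, since its Fourier transform is a multiple of $f(\xi)\overline{\phi_k(\xi)}$ and $\widehat{\div\sigma_k}(\xi)=i\,(\xi\cdot f(\xi))\,\overline{\phi_k(\xi)}\cdot(\mathrm{const})=0$, a property inherited by real and imaginary parts.

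Finally I would verify regularity and the bound. As $\widehat{\tilde\sigma_k}$ is proportional to $f\,\overline{\phi_k}$, the identity $\langle\xi\rangle^{2(1+\alpha)}|f(\xi)|^2\equiv1$ gives $\|\tilde\sigma_k\|_{H^{1+\alpha}}^2=\int|\phi_k(\xi)|^2\,\d\xi=1$, uniformly in $k$, so every $\sigma_k\in H^{1+\alpha}(\R^2;\R^2)$. The decisive estimate \eqref{Q-converge} then follows from Parseval applied in $k$ for fixed $x$: because $|e^{i\xi\cdot x}f(\xi)|=|f(\xi)|$ does not depend on $x$,
\[
\sum_k|\sigma_k(x)|^2=\sum_k|\tilde\sigma_k(x)|^2={\rm tr}\,Q(0)=\frac{\pi}{\alpha},
\]
a finite constant independent of $x$ (finiteness being exactly $\int_{\R^2}\langle\xi\rangle^{-(2+2\alpha)}\,\d\xi<\infty$, valid since $\alpha>0$); the first inequality in \eqref{Q-converge} is then Cauchy--Schwarz, $\sum_k|\sigma_k(x)|\,|\sigma_k(y)|\le\sup_z\sum_k|\sigma_k(z)|^2$, which simultaneously yields the asserted absolute convergence. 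I expect the \emph{main obstacle} to be bookkeeping the passage from the complex spectral integral to a real, divergence-free, square-summable family while keeping the clean form $\sum_k\sigma_k\otimes\sigma_k$; in particular the uniform-in-$x$ square-summability is the one place where the translation invariance $Q(x,y)=Q(x-y)$ is truly used, and it is delivered by Parseval only thanks to $|e^{i\xi\cdot x}|=1$, whereas each individual property (factorization, divergence-free, Sobolev regularity) reduces to a one-line computation on the symbol.
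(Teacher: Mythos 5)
Your proof is correct: the rank-one factorization $\hat Q(\xi)=f(\xi)\otimes f(\xi)$ with $f(\xi)=\langle\xi\rangle^{-(1+\alpha)}\xi^\perp/|\xi|$, the expansion over an orthonormal basis of $L^2$ via Parseval, the realification step, and the uniform-in-$x$ identity $\sum_k|\sigma_k(x)|^2=\mathrm{tr}\,Q(0)=\pi/\alpha$ all check out (the constant is consistent with the paper's $Q(0)=\frac{\pi}{2\alpha}I_2$). The paper itself does not prove this lemma but cites it as known from \cite[Section 2]{CogMau} and \cite[Section 10]{LeJan-Raim02}, and your argument is essentially the standard construction given there (square root of the spectral density plus orthonormal-basis expansion), so you have produced a correct, self-contained version of the same approach rather than a genuinely different one.
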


\begin{lem}\label{Q-structure}
	We have:
	\begin{equation*}
		Q(x)=B_L(\vert x \vert)\dfrac{x\otimes x}{\vert x \vert^2}+B_N(\vert x \vert)\bigg(I_2-\dfrac{x\otimes x}{\vert x \vert^2}\bigg),
	\end{equation*}
	with
	\begin{align*}
		B_L(R)&=\frac{\pi}{2\alpha}-\beta_LR^{2\alpha}-\mathrm{Rem}_{1-u^2}(R), \\
		B_N(R)&=\frac{\pi}{2\alpha}-\beta_NR^{2\alpha}-\mathrm{Rem}_{u^2}(R), \\
		\beta_N&=(1+2\alpha)\beta_L>\beta_L>0,
	\end{align*}
	where the remainders satisfy $\vert \mathrm{Rem}_{1-u^2}(R) \vert+\vert \mathrm{Rem}_{u^2}(R) \vert \lesssim R^2$ for all $R>0$. In particular, we have
	\begin{equation*}
		\vert Q(0)-Q(x) \vert \lesssim \vert x \vert^{2\alpha}\wedge1,\quad \forall x\in \mathbb{R}^2.
	\end{equation*}
\end{lem}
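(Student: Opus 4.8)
The plan is to invert the Fourier transform of $\hat Q$ and read off the two radial profiles, working throughout with the normalization fixed above, for which $Q(0)=\frac{\pi}{2\alpha}I_2$. This is essentially a computation of the type carried out in \cite{LeJan-Raim02, CogMau}, and it splits into a symmetry step (which produces the longitudinal/transverse form) and an asymptotics step (which produces the $R^{2\alpha}$ term and the $O(R^2)$ remainders).

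First I would obtain the structural decomposition purely from invariance. Since $\langle R\xi\rangle=\langle\xi\rangle$ and $I_2-\frac{(R\xi)\otimes(R\xi)}{|R\xi|^2}=R\big(I_2-\frac{\xi\otimes\xi}{|\xi|^2}\big)R^{\top}$ for every $R\in O(2)$, the symbol obeys $\hat Q(R\xi)=R\,\hat Q(\xi)\,R^{\top}$, and this covariance is inherited by $Q$, i.e. $Q(Rx)=R\,Q(x)\,R^{\top}$. Rotations then transport $Q$ along each circle $\{|x|=R\}$, so it is determined by its value at $(R,0)$; and the reflection $S=\mathrm{diag}(1,-1)$, which fixes $(R,0)$, forces $Q((R,0))=S\,Q((R,0))\,S^{\top}$, hence $Q((R,0))$ is diagonal. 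This gives exactly $Q(x)=B_L(|x|)\frac{x\otimes x}{|x|^2}+B_N(|x|)\big(I_2-\frac{x\otimes x}{|x|^2}\big)$ with $B_L(R)=Q_{11}((R,0))$ and $B_N(R)=Q_{22}((R,0))$, and in particular $B_L(0)=B_N(0)=\frac{\pi}{2\alpha}$.

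Next I would extract the small-$R$ expansion. Using that $\hat Q$ is even, one writes (up to the fixed normalization constant)
\[
B_L(0)-B_L(R)=\frac{1}{(2\pi)^2}\int_{\R^2}\big(1-\cos(R\xi_1)\big)\,\langle\xi\rangle^{-(2+2\alpha)}\,\frac{\xi_2^2}{|\xi|^2}\,\d\xi ,
\]
and the analogous formula for $B_N$ with $\xi_1^2/|\xi|^2$ replacing $\xi_2^2/|\xi|^2$. Splitting $\langle\xi\rangle^{-(2+2\alpha)}=|\xi|^{-(2+2\alpha)}-h(\xi)$, where $h\ge 0$ and $h(\xi)\asymp|\xi|^{-(4+2\alpha)}$ as $|\xi|\to\infty$, the homogeneous piece is evaluated by the scaling $\xi\mapsto\xi/R$, which produces precisely $\beta_L R^{2\alpha}$ with $\beta_L=\frac{1}{(2\pi)^2}\int_{\R^2}(1-\cos\xi_1)|\xi|^{-(2+2\alpha)}\frac{\xi_2^2}{|\xi|^2}\,\d\xi>0$. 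For the remaining piece I use $1-\cos t\le \tfrac12 t^2$ to bound it by $\frac{R^2}{2(2\pi)^2}\int_{\R^2}\xi_1^2\,h(\xi)\frac{\xi_2^2}{|\xi|^2}\,\d\xi$; this integral converges (near the origin because $\alpha<1$, at infinity because $h$ decays two orders faster than $|\xi|^{-(2+2\alpha)}$), yielding $\mathrm{Rem}_{1-u^2}(R)=O(R^2)$, and the same argument gives $\beta_N$ and $\mathrm{Rem}_{u^2}(R)=O(R^2)$.

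For the identity $\beta_N=(1+2\alpha)\beta_L$ there are two routes: compute the angular integrals $\int_0^{2\pi}|\cos\phi|^{2\alpha}\cos^2\phi\,\d\phi$ and $\int_0^{2\pi}|\cos\phi|^{2\alpha}\sin^2\phi\,\d\phi$ in polar coordinates and take their ratio via the Beta function (which is exactly $1+2\alpha$); or, more conceptually, use that the fields $\sigma_k$ are divergence-free, so $\partial_i Q_{ij}=0$, which upon inserting the structural form reduces to $B_N(R)=B_L(R)+R\,B_L'(R)$, and differentiating the expansion of $B_L$ reproduces $\beta_N=(1+2\alpha)\beta_L$; positivity of $\beta_L$ then gives $\beta_N>\beta_L>0$. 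The concluding estimate $|Q(0)-Q(x)|\lesssim |x|^{2\alpha}\wedge 1$ follows from the expansion for $|x|\le 1$ (bounding $R^2\lesssim R^{2\alpha}$ since $\alpha<1$) and from the uniform boundedness of $Q$ (Cauchy--Schwarz on the covariance) for $|x|>1$. The main obstacle is the asymptotics step: ensuring that the homogeneous/remainder split genuinely yields an $O(R^2)$ error rather than a second $O(R^{2\alpha})$ contribution -- this hinges on the two-orders-faster decay of $h$ at infinity -- together with pinning down the exact coefficient relation $\beta_N=(1+2\alpha)\beta_L$.
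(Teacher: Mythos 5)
Your proof is correct, but there is no internal proof to compare it against: the paper states this lemma as known, deferring to \cite[Section 2]{CogMau} and \cite[Section 10]{LeJan-Raim02}, so your write-up supplies an argument the paper omits. It is essentially the standard derivation behind those citations, and each step checks out. The $O(2)$-covariance of $\hat Q$ passes to $Q$, and the reflection fixing $(R,0)$ forces diagonality there, yielding the longitudinal/transverse form with $B_L(R)=Q_{11}((R,0))$ and $B_N(R)=Q_{22}((R,0))$; your assignment of angular weights ($\xi_2^2/|\xi|^2=1-u^2$ to $B_L$, $\xi_1^2/|\xi|^2=u^2$ to $B_N$) is exactly what the paper's remainder subscripts record. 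The split $\langle\xi\rangle^{-(2+2\alpha)}=|\xi|^{-(2+2\alpha)}-h(\xi)$, with $0\le h(\xi)\le|\xi|^{-(2+2\alpha)}$ and $h(\xi)\asymp|\xi|^{-(4+2\alpha)}$ at infinity, produces $\beta_L R^{2\alpha}$ by scaling, with $\beta_L$ finite and positive (integrability near $0$ uses $\alpha<1$, at infinity uses $\alpha>0$), while $1-\cos t\le \tfrac12 t^2$ together with $\int \xi_1^2\,h(\xi)\,\xi_2^2|\xi|^{-2}\,\d\xi<\infty$ gives $|\mathrm{Rem}|\lesssim R^2$ for all $R>0$. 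The Beta-function ratio is indeed $1+2\alpha$: writing $\beta_L=C_\alpha\int_0^{2\pi}|\cos\phi|^{2\alpha}\sin^2\phi\,\d\phi$ and $\beta_N=C_\alpha\int_0^{2\pi}|\cos\phi|^{2\alpha+2}\,\d\phi$ with $C_\alpha=\int_0^\infty(1-\cos s)\,s^{-1-2\alpha}\,\d s$, one gets $\beta_N/\beta_L=\Gamma(\alpha+\tfrac{3}{2})\Gamma(\tfrac{1}{2})/[\Gamma(\alpha+\tfrac{1}{2})\Gamma(\tfrac{3}{2})]=1+2\alpha$; your divergence-free route $B_N=B_L+R\,B_L'$ works too, provided you add the small check that $R\,\mathrm{Rem}'(R)=O(R^2)$ (differentiate under the integral and dominate as before, using $|\sin t|\le|t|$). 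The only blemish is a normalization mismatch: you write the inversion with a $(2\pi)^{-2}$ prefactor and phase $\cos(R\xi_1)$, whereas the convention you fixed at the outset, $Q(0)=\int\hat Q(\xi)\,\d\xi=\frac{\pi}{2\alpha}I_2$, corresponds to $Q(x)=\int\hat Q(\xi)\,e^{2\pi i x\cdot\xi}\,\d\xi$, i.e.\ no prefactor and phase $\cos(2\pi R\xi_1)$. This affects only the unclaimed numerical value of $\beta_L$ --- not the structure, the $O(R^2)$ remainders, or the convention-independent ratio $\beta_N/\beta_L$ --- but the two conventions should be made consistent.
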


\begin{rmk}
	Lemma \ref{Q-structure} describes the behaviour of the covariance $Q$ near $x=0$, which will play a key role in the energy bounds of $\theta$.
\end{rmk}

Now we show how to mollify the covariance $Q$. For $\delta>0$, suppose $\rho^\delta$ is a function such that $\widehat{\rho^\delta}$ is real-valued, radial and smooth with $0\leq \widehat{\rho^\delta} \leq 1$ everywhere, $\widehat{\rho^\delta}(\xi)=1$ on $\vert \xi \vert\leq \frac{1}{\delta}$ and $\widehat{\rho^\delta}(\xi)=0$ on $\vert \xi \vert\geq \frac{2}{\delta}$, hence $\rho^{\delta}$ is rapidly decreasing. Take
\begin{equation*}
	\begin{split}
		\sigma_k^{\delta}&=\rho^{\delta} \ast \sigma_k,\\
		Q^{\delta,h}&=\rho^{\delta} \ast Q,\\
		Q^{\delta}&=\rho^{\delta}*\rho^{\delta} \ast Q.
	\end{split}
\end{equation*}
Note that
\begin{equation}  \label{Q-delta}
	\widehat{Q^{\delta}}(\xi)
	=\hat{Q}(\xi)\widehat{\rho^\delta}(\xi)^2
	=\langle \xi\rangle^{-(2+2\alpha)}\bigg(I_2-\dfrac{\xi\otimes \xi}{\lvert \xi\rvert^2 }\bigg)
	\widehat{\rho^\delta}(\xi)^2.
\end{equation}
Therefore, $Q^{\delta}\in \mathcal{S}(\mathbb{R}^2)$ satisfies $\Vert Q^{\delta} \Vert_{L^{\infty}}
\leq \Vert \widehat{Q^{\delta}} \Vert_{L^1}
\leq \Vert \hat{Q} \Vert_{L^1}$.

Moreover, it is easy to check   $Q^{\delta,h}(x-y)=\sum_{k}\sigma_k^{\delta}(x)\otimes \sigma_k(y)=\sum_{k}\sigma_k(x)\otimes \sigma_k^{\delta}(y)$ and $Q^{\delta}(x-y)=\sum_{k}\sigma_k^{\delta}(x)\otimes \sigma_k^{\delta}(y)$. Also, as $\delta\rightarrow0$,
\begin{equation}\label{eq-c-delta}
	Q^{\delta}(0)=\frac{1}{2}\bigg(\int_{\mathbb{R}^2}
	\langle \xi\rangle^{-(2+2\alpha)}\widehat{\rho^\delta}(\xi)^2\,\mathrm{d}\xi\bigg)I_2=:c_{\delta}I_2\rightarrow\dfrac{\pi}{2\alpha}I_2=Q(0).
\end{equation}

\subsection{Regularizing the kernel $G_{\beta}$}
Our task now is to give some descriptions of the kernel corresponding to the nonlinear operator $\nabla^\perp(-\Delta)^{-\frac{\beta}{2}}$ in mSQG \eqref{mSQG}.
We recall some fundamental facts about the Riesz potentials in $\mathbb{R}^2$. For $0<\beta<2$, define
\begin{equation}
	G_{\beta}(x)=\dfrac{1}{\gamma(\beta)}\vert x \vert^{-2+\beta}, \quad
	\gamma(\beta)=\dfrac{2^{\beta}\pi
		\Gamma(\frac{\beta}{2})}{\Gamma(\frac{2-\beta}{2})}.
\end{equation}
It is known that $\hat{G}_{\beta}(\cdot)=(2\pi \vert \cdot \vert)^{-\beta}$, hence we have $(-\Delta)^{-\frac{\beta}{2}}f=G_{\beta}*f$ for all $f\in \mathcal{S}'(\mathbb{R}^2)$. For more details, see \cite[Chapter V, Section 1]{Stein1971}.
In this paper, we are mainly concerned with the case $1<\beta<2$. The derivative and second derivative of $G_\beta$ are
\begin{align*}
	\nabla G_{\beta}(x)&=-\frac{2-\beta}{\gamma(\beta)}
	\dfrac{x}{\vert x \vert ^{4-\beta}}, \\
	D^2 G_{\beta}(x)&=-\frac{2-\beta}{\gamma(\beta)}
	\dfrac{1}{\vert x \vert ^{4-\beta}}\bigg(I_2-(4-\beta)\dfrac{x\otimes x}{\vert x \rvert^2}\bigg),
\end{align*}
and the kernel $K_{\beta}$ is defined as the orthogonal derivative of $G_{\beta}$:
\begin{equation}
	K_{\beta}(x)=\nabla^\perp G_{\beta}(x)
	=-\frac{2-\beta}{\gamma(\beta)}
	\dfrac{x^\perp}{\vert x \vert ^{4-\beta}}, \quad x\neq 0.
\end{equation}
Note that when $\beta=2$, $K_2$ is the Biot-Savart kernel on $\mathbb{R}^2$, which is usually denoted by $K$.

It is straightforward to verify that $G_\beta$ is the Green function of the fractional Laplacian operator $(-\Delta)^{\frac{\beta}{2}}$ on $\mathbb{R}^2$, that is,
\begin{equation*}
	(-\Delta)^{\frac{\beta}{2}}G_\beta = \delta_0.
\end{equation*}
To obtain `good' approximation of the Riesz potentials $G_\beta$, we exploit the relation between Riesz potential and the corresponding fractional heat kernel. Indeed, suppose that $p(t,x)$ is the fundamental solution of the fractional heat equation:
\begin{equation}
	\left\{
	\begin{lgathered}
		\partial_t p + (-\Delta)^{\frac{\beta}{2}}p=0
		\quad\text{in }(0,\infty)\times \mathbb{R}^2,\\
		p(0,\cdot)=\delta_0.
	\end{lgathered}
	\right.  \label{fraction heat eqn}
\end{equation}
Applying the Fourier transform to \eqref{fraction heat eqn}, we get  $\hat{p}(t,\xi)=e^{-(2\pi \vert \xi \vert)^{\beta}t}$, thus we have the following relation:
\begin{equation}\label{G}
	G_{\beta}(x)=\int_{0}^{\infty}p(t,x)\,\mathrm{d}t.
\end{equation}
Now for $0<\delta<1$, we define the regularized kernels $G_\beta^\delta,\, K_\beta^\delta$ as follows:
\begin{equation}\label{G-approx}
	G_\beta^\delta(x)=\int_{\delta}^{1/\delta}p(t,x)\, \mathrm{d}t,\quad K_\beta^\delta=\nabla^{\perp}G_\beta^\delta,\quad x\in \mathbb{R}^2.
\end{equation}
The following property of $G_\beta^\delta$ is straightforward.
\begin{lem}
	The Fourier transform of $G_\beta^\delta$ is
	\begin{equation}\label{G-Fourier}
		\hat{G}_\beta^\delta(\xi)
		=\dfrac{1}{(2\pi \vert \xi \vert)^\beta}
		\big(e^{-(2\pi\vert \xi \vert)^\beta \delta}
		-e^{-(2\pi\vert \xi \vert)^\beta/\delta}\big).
	\end{equation}
	As $\delta\rightarrow0$, for every $\theta\in \dot{H}^{-\frac{\beta}{2}}$,
	\begin{equation}\label{G-Fourier-error}
		\langle\theta,G_\beta^\delta \ast \theta\rangle
		\rightarrow
		\int_{\mathbb{R}^2}(2\pi \vert \xi \vert)^{-\beta}
		\vert \hat{\theta}(\xi) \vert^2\,\mathrm{d}\xi
		=\langle\theta,G_\beta \ast \theta\rangle
		=(2\pi)^{-\beta}\Vert \theta \Vert_{\dot{H}^{-\frac{\beta}{2}}}^2.
	\end{equation}
\end{lem}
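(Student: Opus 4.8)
The plan is to establish the two assertions separately, both by elementary Fourier-analytic manipulations. For the formula \eqref{G-Fourier}, I would start from the definition \eqref{G-approx}, namely $G_\beta^\delta = \int_\delta^{1/\delta} p(t,\cdot)\,\mathrm{d}t$, and interchange the time integral with the Fourier transform. This interchange is licensed by Fubini's theorem: since $p(t,\cdot)$ is the fractional heat kernel, it is a probability density, so $\|p(t,\cdot)\|_{L^1}=1$ for each $t$, whence $\int_\delta^{1/\delta}\|p(t,\cdot)\|_{L^1}\,\mathrm{d}t = \tfrac1\delta-\delta<\infty$ and $G_\beta^\delta\in L^1(\R^2)$. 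Using the already-recorded identity $\hat p(t,\xi)=e^{-(2\pi|\xi|)^\beta t}$, I then compute, with the shorthand $a=(2\pi|\xi|)^\beta$, the elementary integral
\begin{equation*}
	\hat G_\beta^\delta(\xi)=\int_\delta^{1/\delta} e^{-at}\,\mathrm{d}t=\frac1a\big(e^{-a\delta}-e^{-a/\delta}\big),
\end{equation*}
which is exactly \eqref{G-Fourier}.

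For the convergence \eqref{G-Fourier-error}, I would first rewrite the pairing on the Fourier side. By the convolution theorem together with Plancherel's theorem,
\begin{equation*}
	\<\theta, G_\beta^\delta\ast\theta\> = \int_{\R^2} \hat G_\beta^\delta(\xi)\,|\hat\theta(\xi)|^2\,\mathrm{d}\xi,
\end{equation*}
which is well defined because $\hat G_\beta^\delta$ is bounded (note it even stays finite at $\xi=0$, with value $\tfrac1\delta-\delta$, by expanding the exponentials). The plan is then to pass to the limit $\delta\to0$ under the integral sign via dominated convergence. For fixed $\xi\neq0$ one has $e^{-a\delta}\to1$ and $e^{-a/\delta}\to0$, so $\hat G_\beta^\delta(\xi)\to a^{-1}=(2\pi|\xi|)^{-\beta}=\hat G_\beta(\xi)$ pointwise almost everywhere. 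The uniform domination is the key structural point: since $\delta<1/\delta$ for $\delta\in(0,1)$ and $t\mapsto e^{-at}$ is decreasing, the bracket $e^{-a\delta}-e^{-a/\delta}$ is nonnegative and bounded by $1$, giving $0\le \hat G_\beta^\delta(\xi)\le (2\pi|\xi|)^{-\beta}$ for all $\xi$ and all $\delta$. Hence the integrands are dominated by $(2\pi|\xi|)^{-\beta}|\hat\theta(\xi)|^2$, whose integral equals $(2\pi)^{-\beta}\|\theta\|_{\dot H^{-\beta/2}}^2$ and is finite precisely because $\theta\in\dot H^{-\beta/2}$. Dominated convergence then yields the stated limit and the final identification with $\<\theta,G_\beta\ast\theta\>=(2\pi)^{-\beta}\|\theta\|_{\dot H^{-\beta/2}}^2$.

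This lemma is essentially a routine computation, so I do not expect a genuine obstacle; the only points requiring care are the justifications of the two interchanges. The Fubini step for \eqref{G-Fourier} relies on the $L^1$-bound on the heat kernel noted above, and the dominated-convergence step for \eqref{G-Fourier-error} relies on the monotone bound $0\le\hat G_\beta^\delta\le\hat G_\beta$, which is exactly the place where the hypothesis $\theta\in\dot H^{-\beta/2}$ enters to guarantee an integrable majorant. If one wished to avoid dominated convergence, the nonnegativity and monotonicity of $\delta\mapsto \hat G_\beta^\delta(\xi)$ would also allow an appeal to the monotone convergence theorem, giving the same conclusion.
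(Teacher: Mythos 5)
Your proof is correct, and it is precisely the routine verification the paper has in mind: the paper states this lemma without proof (introducing it as ``straightforward''), and your argument—Fubini justified by $\|p(t,\cdot)\|_{L^1}=1$ to get \eqref{G-Fourier}, then Plancherel plus dominated (or monotone) convergence with the majorant $0\le\hat G_\beta^\delta(\xi)\le(2\pi|\xi|)^{-\beta}$ to get \eqref{G-Fourier-error}—is exactly the intended computation, with the interchange-of-limits justifications properly supplied.
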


Now we give pointwise estimates on $G_\beta^\delta$. The proof of this lemma relies on the estimate of the fractional heat kernel, see Appendix \ref{error-G}.

\begin{lem}\label{errer-kernel}
	For every nonnegative integer $m$, there exists a constant $C_m$ such that for all $x\in \mathbb{R}^2$,
	\begin{equation}\label{0G}
		\vert D^mG_\beta^\delta(x) \vert
		\lesssim_m \dfrac{1}{\vert x \vert^{m+2-\beta}
		}.
	\end{equation}
	Moreover, the following estimates hold:
	\begin{equation}\label{1G}
		\begin{split}
			\vert \nabla(G_\beta-G_\beta^\delta)(x) \vert
			&\lesssim_{\beta}\delta^{1/2}
			{\bf 1}_{\{\delta^{1/(\beta+3)}\leq \vert x \vert\leq \delta^{-1/\beta}\}}   \\
			&\quad+\vert x \vert^{\beta-3}{\bf 1}_{\{\vert x \vert\leq \delta^{1/(\beta+3)}\text{ or } \vert x \vert\geq \delta^{-1/\beta}\}},
		\end{split}
	\end{equation}
	\begin{equation*}\label{2G}
		\vert D^2(G_\beta-G_\beta^\delta)(x) \vert
		\lesssim_{\beta}
		\delta+\vert x \vert^{\beta-4}{\bf1}_{\{\vert x \vert \leq \delta^{1/(4+\beta)}\}}.
	\end{equation*}
	Note that all the implicit constants are independent of $\delta$.
\end{lem}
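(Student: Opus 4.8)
The plan is to reduce everything to pointwise bounds on the fractional heat kernel $p(t,x)$ and its spatial derivatives, and then integrate in $t$ over the ranges dictated by \eqref{G} and \eqref{G-approx}. The key analytic input, which I would record first (this is the content to be placed in the appendix), is the scaling identity $p(t,x) = t^{-2/\beta} P(t^{-1/\beta} x)$, where $P$ is the time-$1$ density of the symmetric $\beta$-stable process on $\R^2$, together with the classical decay estimates $|D^m P(y)| \lesssim_m (1+|y|)^{-(m+2+\beta)}$ for the stable density and its derivatives. Combining these gives, for every $m \ge 0$, $t>0$ and $x \neq 0$,
\begin{equation*}
  |D^m p(t,x)| \lesssim_m t^{-(m+2)/\beta}\big(1+ t^{-1/\beta}|x|\big)^{-(m+2+\beta)} \asymp \min\Big\{ t^{-(m+2)/\beta},\ \frac{t}{|x|^{m+2+\beta}}\Big\}.
\end{equation*}
The crossover between the two regimes occurs at the scale $t \asymp |x|^\beta$, and every subsequent estimate is obtained by splitting the $t$-integral at this scale and comparing it with the cut-offs $\delta$ and $1/\delta$.

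For \eqref{0G} I would simply bound $|D^m G_\beta^\delta(x)| \le \int_\delta^{1/\delta} |D^m p(t,x)|\,\d t \le \int_0^\infty |D^m p(t,x)|\,\d t$; passing to the full time integral is harmless since it only enlarges the quantity, and the resulting bound is automatically uniform in $\delta$. Splitting at $t = |x|^\beta$ and using the tail bound for $t \le |x|^\beta$ and the on-diagonal bound for $t \ge |x|^\beta$ yields $\int_0^{|x|^\beta} t|x|^{-(m+2+\beta)}\,\d t + \int_{|x|^\beta}^\infty t^{-(m+2)/\beta}\,\d t \lesssim |x|^{-(m+2-\beta)}$, where convergence of the second integral uses $(m+2)/\beta>1$. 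This is exactly \eqref{0G}.

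For the difference estimates \eqref{1G} and \eqref{2G} the starting point is that \eqref{G} and \eqref{G-approx} give $D^m(G_\beta - G_\beta^\delta)(x) = \int_0^\delta D^m p(t,x)\,\d t + \int_{1/\delta}^\infty D^m p(t,x)\,\d t$, so the error comes entirely from the very short and the very long times that were cut away. For the long-time piece I would distinguish $|x| \le \delta^{-1/\beta}$ (so that $1/\delta \ge |x|^\beta$, only the on-diagonal bound is active, and the integral is a power of $\delta$) from $|x| \ge \delta^{-1/\beta}$ (where a tail part appears and produces $|x|^{\beta - m - 2}$). For the short-time piece I would distinguish $|x| \le \delta^{1/\beta}$ (where the crossover lies inside $(0,\delta)$ and the integral is again of size $|x|^{\beta-m-2}$) from $|x| \ge \delta^{1/\beta}$ (where only the tail bound is active and the integral is of size $\delta^2 |x|^{-(m+2+\beta)}$). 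The thresholds $\delta^{1/(\beta+3)}$ in \eqref{1G} (for $m=1$) and $\delta^{1/(4+\beta)}$ in \eqref{2G} (for $m=2$) are precisely where this model term $\delta^2 |x|^{-(m+2+\beta)}$ equals $\delta$; elementary comparisons — namely $\delta^2 |x|^{-(m+2+\beta)} \lesssim |x|^{\beta-m-2}$ as soon as $|x| \gtrsim \delta^{1/\beta}$, the same quantity being $\le \delta$ once $|x|$ exceeds the stated threshold, and the long-time powers $\delta^{(m+2)/\beta-1}$ being dominated by $\delta^{1/2}$ (resp.\ $\delta$) via $\beta<2$ — then match every region with the announced right-hand side.

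The main obstacle is the first step, namely the pointwise derivative bounds on $p(t,x)$: these are the only genuinely nontrivial ingredient and require either the known sharp estimates for the symmetric $\beta$-stable density (the decay $|D^m P(y)|\lesssim (1+|y|)^{-(m+2+\beta)}$) or a direct analysis of the oscillatory integral $p(t,x)=\int_{\R^2} e^{2\pi i x\cdot\xi}\, e^{-(2\pi|\xi|)^\beta t}\,\d\xi$. Once this estimate is in hand, the remaining work is pure bookkeeping: a careful but routine case analysis according to the position of the crossover scale $|x|^\beta$ relative to $\delta$ and $1/\delta$, together with the elementary inequalities coming from $\beta\in(1,2)$ and $\delta<1$.
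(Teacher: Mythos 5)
Your proposal is correct, and its integration half is essentially the paper's own argument: the paper, too, writes $D^m(G_\beta-G_\beta^\delta)$ as $\big(\int_0^\delta+\int_{1/\delta}^\infty\big)D^m_xp(t,x)\,\mathrm{d}t$, splits at the crossover scale $t\asymp|x|^\beta$, and performs the same region-by-region comparisons (its display \eqref{B.12} is exactly your model computation, and your observation that the long-time tails $\delta^{3/\beta-1}$, $\delta^{4/\beta-1}$ are absorbed into $\delta^{1/2}$, resp.\ $\delta$, using only $\beta<2$ matches the paper's final simplification). The genuine difference is how the pointwise derivative bounds on the fractional heat kernel are procured. You import them as a citation: self-similar scaling $p(t,x)=t^{-2/\beta}P(t^{-1/\beta}x)$ plus the classical decay $|D^mP(y)|\lesssim_m(1+|y|)^{-(m+2+\beta)}$ of the stable density, giving $|D^mp(t,x)|\lesssim\min\{t^{-(m+2)/\beta},\,t|x|^{-(m+2+\beta)}\}$. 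The paper instead assumes only the zero-order two-sided bound $p(t,x)\asymp t\,(t^{2/\beta}+|x|^2)^{-(d+\beta)/2}$ (cited from Bogdan--Jakubowski/Kwa\'snicki) and manufactures the derivative bounds itself via Bochner subordination: differentiating $p(t,x)=\int_0^\infty h(s,x)\rho_t(s)\,\mathrm{d}s$ under the integral (justified by $\rho_t(s)\lesssim t\,s^{-1-\beta/2}$) yields the exact identities $\nabla_xp(t,x)=-2\pi\,p_{(d+2)}(t,x)\,x$ and $D^2_xp(t,x)=-2\pi\,p_{(d+2)}(t,x)\,I_d+4\pi^2p_{(d+4)}(t,x)\,x\otimes x$, where $p_{(d+2)},p_{(d+4)}$ are the fractional heat kernels in dimensions $d+2$ and $d+4$, so the scalar bound in those dimensions gives two-sided control of all derivatives. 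What each route buys: the paper's is self-contained modulo a single standard scalar estimate and yields matching lower bounds (not needed here); yours is shorter but rests on a stronger black box (derivative decay of stable densities --- true and classical, provable e.g.\ by the very same subordination trick). Your upper bound is also slightly lossier on the diagonal for odd $m$ (it misses the vanishing factor in $|\nabla_xp(t,x)|\lesssim|x|\,t^{-(d+2)/\beta}$ for $|x|\le t^{1/\beta}$), but as your case-checking shows this loss is immaterial for all three estimates of the lemma.
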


\subsection{Smoothing the initial data}
Given initial condition $\theta_0\in \dot{H}^{-\frac{\beta}{2}}$,  we claim that there exists a family $(\theta_0^{\delta})_{\delta}$ satisfying
\begin{equation}\label{condition1}
	\begin{split}
		&\theta_0^{\delta}\in L^1\cap L^{\infty}\cap \dot{H}^{-\frac{\beta}{2}},\\
		&\int_{\mathbb{R}^2}\theta_0^{\delta}(x)\, \mathrm{d}x=0,\\
		&\int_{\mathbb{R}^2}\vert x \vert^m\vert \theta_0^{\delta}(x) \vert\, \mathrm{d}x< \infty,\quad \forall m\in \mathbb{N},
	\end{split}
\end{equation}
and
\begin{equation}\label{condition2}
	\theta_0^{\delta}\rightarrow\theta_0 \quad\text{ in } \dot{H}^{-\frac{\beta}{2}}\text{ as } \delta\rightarrow0.
\end{equation}
Precisely, let $v_0=K*\theta_0$, where $K$ is the Biot-Savart kernel, then $v_0\in \dot{H}^{1-\frac{\beta}{2}}$ and $\mathrm{curl}(v_0)=\theta_0$. Since $C_c^{\infty}$ is dense in $\dot{H}^{1-\frac{\beta}{2}}$, we can choose a family $(v_0^{\delta})_\delta$ of $C_c^{\infty}$ vector fields converging to $v_0$ in $\dot{H}^{1-\frac{\beta}{2}}$ and take $\theta_0^{\delta}=\mathrm{curl}(v_0^{\delta})$. Due to the smoothness, compact support of $v_0^{\delta}$ and the definition of $\theta_0^\delta$ as a curl, the properties \eqref{condition1} are satisfied. The convergence \eqref{condition2} follows from the convergence of $v_0^{\delta}$.

If $\theta_0\in L^p\cap \dot{H}^{-\frac{\beta}{2}}~(\text{resp.}~ L^p\cap L^1\cap \dot{H}^{-\frac{\beta}{2}})$ for some $1<p<\infty$, then besides the properties \eqref{condition1} and \eqref{condition2}, we also claim that the approximate sequence $(\theta_0^{\delta})$  satisfies
\begin{equation}\label{condition4}
	\theta_0^{\delta}\rightarrow \theta_0 \quad\text{ in } L^p~(\text{resp.}~L^1\cap L^p)\text{ as } \delta\rightarrow0.
\end{equation}
The proof of this claim can be found in \cite[Section 3]{CogMau}.

Without loss of generality, we can always assume that in addition to the conditions \eqref{condition1} and \eqref{condition2}, the family $(\theta_0^\delta)_\delta$ also satisfies the following bounds: for some $0<2\bar{\epsilon}<\min \big\{\frac{2\alpha+\beta-2}{\beta+4},\frac{\beta-1}{\beta+3},\frac{1}{4} \big\}$, as $\delta\rightarrow0$,
\begin{equation}\label{condition3}
	\begin{split}
		\delta\Vert \theta_0^\delta \Vert_{L^1}
		\int_{\mathbb{R}^2} \vert x \vert^2
		\vert \theta_0^\delta(x)\vert\, \mathrm{d}x=o(1),\\
		\delta^{\bar{\epsilon}}(\Vert \theta_0^\delta \Vert_{L^\infty}
		+\Vert \theta_0^\delta \Vert_{L^1})=o(1).
	\end{split}
\end{equation}
Indeed, we can make this possible by relabeling the parameter $\delta$ of the family $(\theta_0^\delta)_\delta$.

\section{A priori estimates of the regularized model}\label{sec-a-priori}
In this section, we define the regularized model for mSQG equation \eqref{mSQG} using the regularized objects in the last section and obtain a priori estimates of energy and time continuity, which are essential in the proof of Theorem \ref{exist}, namely the existence of weak solutions.

\subsection{Regularized model}
After defining the regularized objects in the last section, we can consider the regularized equation for \eqref{mSQG}: for all $0<\delta<1$,
\begin{equation}\label{reg-mSQG}
	\left\{
	\begin{lgathered}
		\mathrm{d}\theta^\delta + (K_{\beta}^\delta \ast \theta^\delta)\cdot \nabla \theta^\delta \, \mathrm{d}t + \sum_{k} \sigma_k^\delta\cdot \nabla \theta^\delta \, \mathrm{d}B^k=\dfrac{c_\delta}{2}\Delta \theta^\delta\, \mathrm{d}t, \\
		\theta^\delta(0,\cdot) = \theta_0^\delta,
	\end{lgathered}
	\right.
\end{equation}
where $c_\delta$ is the constant in \eqref{eq-c-delta}. This is a Vlasov-type equation with smooth interaction kernel and smooth common noise. The properties of the solution to equation \eqref{reg-mSQG} are listed in the next lemma. For a detailed proof, we refer the reader to  \cite[Appendix C]{CogMau}.

\begin{lem}\label{reg-mSQG-sol}
	For all $\theta_0^\delta$ satisfying \eqref{condition1}, for every filtered probability space $(\Omega,\mathcal{F},(\mathcal{F}_t)_t,\P)$ and every sequence $(B^k)_k$ of independent real Brownian motions, there exists a $\dot{H}^{-\frac{\beta}{2}}$ solution to equation \eqref{reg-mSQG}, that is, an  $(\mathcal{F}_t)_t$-progressively measurable $\dot{H}^{-\frac{\beta}{2}}(\mathbb{R}^2)$-valued process  $\theta^\delta$ such that $\P\text{-a.s.}$, for every $t\in [0,T]$,
	\begin{align*}
		\theta_t^\delta
		=\theta_0^\delta-\int_{0}^{t}\mathrm{div}((K_\beta^\delta \ast \theta_r^\delta)\theta_r^\delta)\, \mathrm{d}r  
		-\sum_{k}\int_{0}^{t}\mathrm{div}(\sigma_k^\delta\theta_r^\delta)\, \mathrm{d}B_r^k
		+\frac{c_\delta}{2}\int_{0}^{t}\Delta\theta_r^\delta\, \mathrm{d}r,
	\end{align*}
	where the equality holds in distribution. The solution $\theta^\delta$ also satisfies that for all $1\leq p\leq \infty$,
	\begin{equation*}
		\begin{split}
			\sup\limits_{t\in [0,T]} \Vert \theta_t^\delta \Vert_{L^p}\leq \Vert \theta_0^\delta \Vert_{L^p},\quad \P\text{-}a.s.
		\end{split}	
	\end{equation*}
	Moreover, the solution is unique in the space $L^{\infty}\big([0,T];L^p(\Omega;L^2)\big)$ for every $p>2$.
\end{lem}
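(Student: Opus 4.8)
The plan is to treat the regularized equation \eqref{reg-mSQG} as a transport (Vlasov-type) stochastic PDE in which, for each fixed $\delta>0$, \emph{all} coefficients are smooth. The kernel $K_\beta^\delta=\nabla^\perp G_\beta^\delta$ is in fact a bounded smooth field with bounded derivatives of every order: for fixed $\delta$ the time cut-off in \eqref{G-approx} removes the singularity at the origin, whereas \eqref{0G} only records the $\delta$-uniform behaviour. Likewise the fields $\sigma_k^\delta=\rho^\delta\ast\sigma_k$ are smooth and divergence free, with $\sum_k|\sigma_k^\delta|^2$ bounded thanks to \eqref{Q-converge}. Since both $K_\beta^\delta\ast\theta^\delta$ and each $\sigma_k^\delta$ are divergence free, \eqref{reg-mSQG} is a transport equation along an incompressible stochastic flow whose drift is slaved to the transported density through a smooth convolution. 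I would therefore build the solution through its Lagrangian representation.

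First I would set up a fixed point on the velocity field. Given a candidate drift $b\in C([0,T];C^1_b)$, Kunita's theory of stochastic flows produces a unique flow of diffeomorphisms $\Phi^b$ solving $\d\Phi_t^b(x)=b(t,\Phi_t^b(x))\,\d t+\sum_k\sigma_k^\delta(\Phi_t^b(x))\circ\d B^k_t$, which is volume preserving because $b$ and the $\sigma_k^\delta$ are divergence free. Setting $\theta_t=(\Phi_t^b)_\#\theta_0^\delta$ and $\Gamma(b)_t=K_\beta^\delta\ast\theta_t$, a fixed point $b=\Gamma(b)$ yields a solution; that it takes values in $\dot H^{-\frac\beta2}$ follows from $\theta_0^\delta\in\dot H^{-\frac\beta2}$ being propagated by the smooth volume-preserving flow. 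The contraction of $\Gamma$ comes from the Lipschitz dependence of $\Phi^b$ on $b$ (standard Gronwall estimates on the SDE) together with the boundedness of $K_\beta^\delta$ and $\nabla K_\beta^\delta$; since the $L^p$ norms do not grow, the local-in-time contraction iterates to all of $[0,T]$. Volume preservation of $\Phi^b$ gives at once $\|\theta_t^\delta\|_{L^p}=\|\theta_0^\delta\|_{L^p}$ for every $p\in[1,\infty]$, which is the asserted bound. To see that the push-forward solves the It\^o equation I would apply the It\^o--Wentzell (Kunita) formula to $\langle\theta_t,\varphi\rangle=\langle\theta_0^\delta,\varphi\circ\Phi_t\rangle$; the Stratonovich-to-It\^o correction produces exactly $\tfrac{c_\delta}{2}\Delta\theta$, since $\sum_k\sigma_k^\delta\otimes\sigma_k^\delta\equiv Q^\delta(0)=c_\delta I_2$ by \eqref{eq-c-delta} and the divergence-free constraint annihilates the first-order part of the correction.

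For uniqueness in $L^\infty\big([0,T];L^p(\Omega;L^2)\big)$ with $p>2$, I would take two solutions $\theta^1,\theta^2$ with the same data and noise, set $w=\theta^1-\theta^2$ and $u^i=K_\beta^\delta\ast\theta^i$, and estimate the energy of $w$. As the noise is divergence free, the transport part contributes nothing to $\d\|w\|_{L^2}^2$ after integration by parts, and the It\^o Laplacian $\tfrac{c_\delta}{2}\Delta w$ cancels exactly against the quadratic variation $\sum_k\|\sigma_k^\delta\cdot\nabla w\|_{L^2}^2=c_\delta\|\nabla w\|_{L^2}^2$ from the martingale term. What remains is the nonlinear mismatch $u^1\cdot\nabla\theta^1-u^2\cdot\nabla\theta^2=u^1\cdot\nabla w+(K_\beta^\delta\ast w)\cdot\nabla\theta^2$, and here the smoothness of $K_\beta^\delta$ is decisive: the self-interaction is Lipschitz, with $\|K_\beta^\delta\ast w\|_{W^{1,\infty}}\lesssim_\delta\|w\|_{L^2}$, so after integrating by parts the mismatch is bounded by $C_\delta\|w\|_{L^2}^2$, the $L^2$ spatial regularity and the $p>2$ moments ensuring that all the stochastic-integral manipulations and expectations are licit. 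Taking expectations and applying Gronwall's lemma then forces $w\equiv0$.

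I expect the main difficulty to be the self-consistent (McKean--Vlasov) coupling in the existence step: unlike a linear transport equation, the drift is determined by the very density being transported, so the contraction must be run simultaneously for the flow and for its invariant push-forward, and one must check that the $\delta$-dependent constants controlling $K_\beta^\delta$ do not destroy the iteration. A secondary technical point is the careful It\^o--Wentzell bookkeeping needed to land precisely on the It\^o form \eqref{reg-mSQG} with the correct coefficient $c_\delta/2$, and, in the uniqueness argument, to locate the exact cancellation between the Stratonovich correction and the martingale bracket that prevents the loss of a spatial derivative.
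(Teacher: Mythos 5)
Your existence argument is sound and is essentially the route the paper takes: the paper itself offers no proof beyond describing \eqref{reg-mSQG} as a Vlasov-type equation with smooth kernel and smooth common noise and citing \cite[Appendix C]{CogMau}, and that reference proceeds exactly through the Lagrangian representation you describe (stochastic flow, fixed point on the self-consistent drift, volume preservation for the $L^p$ bounds, It\^o--Wentzell with $\sum_k\sigma_k^\delta\otimes\sigma_k^\delta=Q^\delta(0)=c_\delta I_2$ producing the coefficient $c_\delta/2$). Two repairable imprecisions: the fixed point must be run over \emph{adapted random} drifts, since $K_\beta^\delta\ast\theta_t$ depends on $\omega$ (you flag this yourself), and membership in $\dot H^{-\beta/2}$ is best justified through the zero mean and moment conditions \eqref{condition1} preserved by the flow, rather than by an abstract ``propagation'' of $\dot H^{-\beta/2}$.

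The genuine gap is in the uniqueness part. A competitor solution in $L^\infty\big([0,T];L^p(\Omega;L^2)\big)$ has no spatial derivatives, so the It\^o formula for $\Vert w\Vert_{L^2}^2$ must be justified by mollification; the formal cancellations you list (transport, and Laplacian against quadratic variation) do survive this, but the mismatch term does not: your claim that ``after integrating by parts the mismatch is bounded by $C_\delta\Vert w\Vert_{L^2}^2$'' fails. Integration by parts only transfers the derivative, $\langle w,(K_\beta^\delta\ast w)\cdot\nabla\theta^2\rangle=-\langle\theta^2,(K_\beta^\delta\ast w)\cdot\nabla w\rangle$; the trilinear form $(f,g)\mapsto\langle f,v\cdot\nabla g\rangle$ with $v$ Lipschitz and divergence free is \emph{not} bounded on $L^2\times L^2$ --- it is merely antisymmetric, and the only cancellation available, $\langle\theta^2,v\cdot\nabla\theta^2\rangle=0$, is already spent. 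No arrangement of commutators repairs this at the $L^2$ level, because one derivative must land on a function that is only $L^2$. The estimate does close one rung lower, e.g.\ for $\Vert w\Vert_{\dot H^{-1}}^2$ or $\langle w,G_\beta^\delta\ast w\rangle$, where $\Vert K_\beta^\delta\ast w\Vert_{W^{1,\infty}}\lesssim_\delta\Vert w\Vert_{\dot H^{-1}}$ bounds the mismatch by $C_\delta\Vert w\Vert_{\dot H^{-1}}^2\Vert\theta^2\Vert_{L^2}$ (this is exactly the quantity the paper runs its own uniqueness proof on in Section \ref{sec-uniqueness}); alternatively one avoids Eulerian energy estimates entirely by first showing, via the commutator/renormalization argument for the \emph{linear} equation with frozen Lipschitz drift $K_\beta^\delta\ast\theta^i$, that every solution in the class is a Lagrangian push-forward, and then running Gronwall on the difference of the flows. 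A second, related gap: even granting a bound, its constant is the random variable $\Vert\theta^2_t\Vert_{L^2}$, which in your class lies only in $L^p(\Omega)$; ``taking expectations and applying Gronwall'' therefore does not close, and one needs stopping times or a weight such as $\exp\big(-\lambda\int_0^t\Vert\theta^2_r\Vert_{L^2}\,\mathrm{d}r\big)$ --- this is precisely where the hypothesis $p>2$, which your argument never actually invokes, is needed to make the localized martingale and product terms integrable.
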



The following two technical lemmas illustrate that the approximation errors are small, which will be used in the passage to the limit $\delta\rightarrow0$.

\begin{lem}\label{approx-energy}
	Assume \eqref{condition1}, \eqref{condition2} and \eqref{condition3} on the initial condition $\theta_0^\delta$. Suppose $\theta^\delta$ is the unique solution to the regularized model \eqref{reg-mSQG}. We have
	\begin{equation*}
		\mathbb{E}\bigg[\sup\limits_{t\in [0,T]}\Big\vert \Vert\theta_t^\delta \Vert_{\dot{H}^{-\frac{\beta}{2}}}^2
		-(2\pi)^\beta \langle \theta_t^\delta,G_\beta^\delta\ast \theta_t^\delta \rangle \Big\vert \bigg]\rightarrow0, \quad \text{ as }\delta\rightarrow0.
	\end{equation*}
\end{lem}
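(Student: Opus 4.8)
The plan is to first collapse the bracketed quantity into a single bilinear form in the error kernel. By the last identity in \eqref{G-Fourier-error} one has $\|\theta\|_{\dot H^{-\beta/2}}^2 = (2\pi)^\beta \langle\theta, G_\beta\ast\theta\rangle$, so that
$$\|\theta_t^\delta\|_{\dot H^{-\beta/2}}^2 - (2\pi)^\beta \langle\theta_t^\delta, G_\beta^\delta\ast\theta_t^\delta\rangle = (2\pi)^\beta \langle\theta_t^\delta, (G_\beta - G_\beta^\delta)\ast\theta_t^\delta\rangle .$$
Passing to the Fourier side and using $\hat G_\beta(\xi)=(2\pi|\xi|)^{-\beta}$ together with \eqref{G-Fourier}, this equals $\int_{\R^2}\mu_\delta(\xi)\,|\hat\theta_t^\delta(\xi)|^2\,\d\xi$, where, writing $s=(2\pi|\xi|)^\beta$,
$$\mu_\delta(\xi) = \frac{1}{|\xi|^\beta}\big(1-e^{-s\delta}\big) + \frac{1}{|\xi|^\beta}\,e^{-s/\delta} \ge 0 .$$
Since $\mu_\delta$ is nonnegative the absolute value may be dropped, and it suffices to bound $\E\big[\sup_t \int\mu_\delta|\hat\theta_t^\delta|^2\big]$. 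I would treat the two summands separately: the first, $A^\delta(t)$, is the high-frequency truncation error and the second, $B^\delta(t)$, the low-frequency one.

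For $A^\delta$ I would use the elementary bound $|\xi|^{-\beta}(1-e^{-s\delta}) = (2\pi)^\beta s^{-1}(1-e^{-s\delta}) \lesssim \min(\delta,1/s) \le \delta^{1/2}$, valid for all $\xi$, which gives $A^\delta(t) \lesssim \delta^{1/2}\int|\hat\theta_t^\delta|^2 = \delta^{1/2}\|\theta_t^\delta\|_{L^2}^2 \le \delta^{1/2}\|\theta_0^\delta\|_{L^2}^2$ by the $L^2$-contraction of Lemma \ref{reg-mSQG-sol}. As $\|\theta_0^\delta\|_{L^2}^2\le\|\theta_0^\delta\|_{L^1}\|\theta_0^\delta\|_{L^\infty}$, the second line of \eqref{condition3} (with $\bar\epsilon<1/4$) forces $\delta^{1/2}\|\theta_0^\delta\|_{L^2}^2\to0$. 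This bound is deterministic and uniform in $t$, so it controls $\E[\sup_t A^\delta]$.

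For $B^\delta$ the factor $e^{-s/\delta}$ concentrates the integral on $|\xi|\lesssim\delta^{1/\beta}$, so I would exploit the cancellation $\int\theta_t^\delta=0$ (inherited from \eqref{condition1}, since every term of \eqref{reg-mSQG} is in divergence or Laplacian form and the flow below is measure-preserving). This yields $|\hat\theta_t^\delta(\xi)| = \big|\int\theta_t^\delta(x)(e^{-2\pi i\xi\cdot x}-1)\,\d x\big| \lesssim |\xi|\,M_1(\theta_t^\delta)$, with $M_1(f)=\int|x||f|\,\d x$. Inserting this and rescaling $\xi=\delta^{1/\beta}\eta$ gives $B^\delta(t) \lesssim \delta^{(4-\beta)/\beta} M_1(\theta_t^\delta)^2 \le \delta\, M_1(\theta_t^\delta)^2$, the last step because $(4-\beta)/\beta\ge1$ for $\beta\le2$. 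By Cauchy--Schwarz, $M_1(f)^2 \le \|f\|_{L^1}\int|x|^2|f|\,\d x$, so with the $L^1$-contraction it remains to bound the propagated second moment $\E\big[\sup_t\int|x|^2|\theta_t^\delta|\,\d x\big]$.

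I expect this last point to be the main obstacle. To handle it I would use that \eqref{reg-mSQG} is transport along the divergence-free stochastic flow $\d\phi_t = u^\delta(t,\phi_t)\,\d t + \sum_k\sigma_k^\delta(\phi_t)\circ\d B^k_t$, so $\theta_t^\delta=\theta_0^\delta\circ\phi_t^{-1}$ with $\phi_t$ measure-preserving, whence $\int|x|^2|\theta_t^\delta|\,\d x = \int|\phi_t(y)|^2|\theta_0^\delta(y)|\,\d y$. Splitting $|\phi_t(y)|^2\le 2|y|^2+2|\phi_t(y)-y|^2$ and estimating the displacement by the Burkholder--Davis--Gundy inequality gives $\E[\sup_t|\phi_t(y)-y|^2] \lesssim_T \|u^\delta\|_{L^\infty}^2 + \sup_x\sum_k|\sigma_k^\delta(x)|^2$, uniformly in $y$; the noise term is bounded by \eqref{Q-converge}, while $\|u^\delta\|_{L^\infty}$ is controlled through the uniform kernel bound $|K_\beta^\delta|\lesssim|\cdot|^{\beta-3}$ from \eqref{0G} and Young/H\"older, at the cost of a factor $o(\delta^{-\bar\epsilon})$ from \eqref{condition3}. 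Assembling,
$$\E\Big[\sup_t B^\delta\Big] \lesssim \delta\,\|\theta_0^\delta\|_{L^1}\Big(\int|x|^2|\theta_0^\delta|\,\d x + o(\delta^{-2\bar\epsilon})\,\|\theta_0^\delta\|_{L^1}\Big),$$
where the first term vanishes by the first line of \eqref{condition3} and the second is $o(\delta^{1-4\bar\epsilon})\to0$ precisely because $\bar\epsilon<1/4$. Making the moment propagation quantitative and matching it to the prescribed decay rates in \eqref{condition3} is where the real work lies.
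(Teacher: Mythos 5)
Your proposal is correct and follows the same skeleton as the paper's proof: the identical Fourier-side splitting of the error into the high-frequency piece ($1-e^{-(2\pi|\xi|)^\beta\delta}$) and the low-frequency piece ($e^{-(2\pi|\xi|)^\beta/\delta}$), the same elementary bound $|\xi|^{-\beta}(1-e^{-(2\pi|\xi|)^\beta\delta})\lesssim\delta$ combined with the pathwise $L^2$-contraction of Lemma \ref{reg-mSQG-sol}, and the same use of the zero-mean property $\widehat{\theta_t^\delta}(0)=0$ to gain a factor $|\xi|\,M_1(\theta_t^\delta)$ at low frequencies, so that everything reduces to controlling $\E\big[\sup_t M_1(\theta_t^\delta)^2\big]$ against the decay rates in \eqref{condition3}.

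The one place you genuinely diverge is the step you correctly identify as the crux: the paper disposes of $\E\big[\sup_t(\int|x|\,|\theta_t^\delta|\,\d x)^2\big]$ by citing \cite[Lemma C.1]{CogMau}, whereas you re-derive a sufficient version of that bound from scratch — Cauchy--Schwarz down to the second moment, the Lagrangian representation $\theta_t^\delta=\theta_0^\delta\circ\phi_t^{-1}$ along the measure-preserving Stratonovich flow, BDG for the displacement (whose quadratic variation is even deterministic, since $\sum_k|\sigma_k^\delta(x)|^2=\mathrm{tr}\,Q^\delta(0)=2c_\delta$), and the $L^\infty$ velocity bound from the kernel estimate \eqref{0G}. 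This is essentially a reconstruction of how the cited lemma is proved, and your bookkeeping of the rates ($o(\delta^{1-4\bar\epsilon})$, using $2\bar\epsilon<\tfrac14$) matches the paper's. What your route buys is self-containedness; what it costs is the need to justify rigorously that the unique solution of the Vlasov-type equation \eqref{reg-mSQG} coincides with the pushforward under the flow — in particular that the It\^o corrector $\tfrac{c_\delta}{2}\Delta$ is exactly the Stratonovich conversion term (this holds because $Q^\delta(0)=c_\delta I_2$ and $\nabla Q^\delta(0)=0$), and that the pushforward lies in the uniqueness class of Lemma \ref{reg-mSQG-sol}. These facts are true and are how \cite{CogMau} constructs the regularized solutions, but in a complete write-up they must be stated and either proved or cited, which is precisely what the paper's citation short-circuits.
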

\begin{proof}
	By the definition of $G_\beta^\delta$, we have
	\begin{equation*}
		\begin{split}
			&\Big\vert \Vert\theta_t^\delta \Vert_{\dot{H}^{-\frac{\beta}{2}}}^2
			-(2\pi)^\beta \langle \theta_t^\delta,G_\beta^\delta\ast \theta_t^\delta \rangle \Big\vert  \\
			&=\int_{\mathbb{R}^2} \vert \xi \vert^{-\beta} \big(1-e^{-(2\pi\vert \xi \vert)^\beta \delta} \big)
			\big\vert \widehat{\theta_t^\delta}(\xi) \big\vert^2\,\mathrm{d}\xi
			+\int_{\mathbb{R}^2} \vert \xi \vert^{-\beta}
			e^{-(2\pi\vert \xi \vert)^\beta/\delta}
			\big\vert \widehat{\theta_t^\delta}(\xi) \big\vert^2\,\mathrm{d}\xi \\
			&\leq (2\pi)^\beta\delta\int_{\mathbb{R}^2} \big| \widehat{\theta_t^\delta}(\xi) \big|^2\,\mathrm{d}\xi
			+\int_{\mathbb{R}^2} \vert \xi \vert^{-\beta}
			e^{-(2\pi\vert \xi \vert)^\beta/\delta}
			\big\vert \widehat{\theta_t^\delta}(\xi) \big\vert^2\,\mathrm{d}\xi.
		\end{split}
	\end{equation*}
	Note that $\widehat{\theta_t^\delta}(0)=\int_{\mathbb{R}^2}\theta_t^\delta(x)\, \mathrm{d}x=\int_{\mathbb{R}^2}\theta_0^\delta(x)\, \mathrm{d}x=0$, we get
	\begin{align*}
		\big\vert \widehat{\theta_t^\delta}(\xi) \big\vert
		\leq \big\vert \widehat{\theta_t^\delta}(0) \big\vert
		+ \big\Vert \nabla \widehat{\theta_t^\delta} \big\Vert_{L^{\infty}} \vert \xi \vert 
		\leq 
		2\pi \vert \xi \vert \int_{\mathbb{R}^2}\vert x \vert\, \vert \theta_t^\delta(x) \vert\, \mathrm{d}x.
	\end{align*}
	Hence we have
	\begin{align*}
		\int_{\mathbb{R}^2} \vert \xi \vert^{-\beta}
		e^{-(2\pi\vert \xi \vert)^\beta/\delta}
		\big\vert \widehat{\theta_t^\delta}(\xi) \big\vert^2\,\mathrm{d}\xi
		&\leq
		4\pi^2\int \vert \xi \vert^{2-\beta}e^{-(2\pi\vert \xi \vert)^\beta/\delta} \bigg(\int \vert x \vert\, \vert \theta_t^\delta(x) \vert\, \mathrm{d}x\bigg)^2 \,\mathrm{d}\xi \\
		&\lesssim \delta \bigg(\int \vert x \vert\, \vert \theta_t^\delta(x) \vert\, \mathrm{d}x\bigg)^2.
	\end{align*}
	Then we obtain
	\begin{equation*}
		\left\vert \Vert\theta_t^\delta \Vert_{\dot{H}^{-\frac{\beta}{2}}}^2
		-(2\pi)^\beta \langle \theta_t^\delta,G_\beta^\delta\ast \theta_t^\delta \rangle \right\vert
		\lesssim \delta \bigg( \Vert \theta_t^\delta \Vert_{L^2}^2+\Big(\int \vert x \vert\, \vert \theta_t^\delta(x) \vert\, \mathrm{d}x\Big)^2 \bigg).
	\end{equation*}
	By Lemma \ref{reg-mSQG-sol}, $\Vert \theta_t^\delta \Vert_{L^2}^2
	\leq  \Vert \theta_0^\delta \Vert_{L^2}^2
	\leq  \Vert \theta_0^\delta \Vert_{L^1}
	\Vert \theta_0^\delta \Vert_{L^\infty}$; by \cite[Lemma C.1]{CogMau}, we get
	\begin{align*}
		\mathbb{E}\bigg[\sup\limits_{t\in [0,T]}
		\bigg(\int_{\mathbb{R}^2} \vert x \vert\, \vert \theta_t^\delta(x) \vert\, \mathrm{d}x\bigg)^2 \bigg]
		\lesssim \Vert \theta_0^\delta \Vert_{L^1}
		\int_{\mathbb{R}^2} \vert x \vert^2
		\vert \theta_0^\delta(x)\vert\, \mathrm{d}x 
		+(\Vert \theta_0^\delta \Vert_{L^\infty}^2
		+\Vert \theta_0^\delta \Vert_{L^1}^2+1)
		\Vert \theta_0^\delta \Vert_{L^1}^2.
	\end{align*}
	Combining the above estimates yields
	\begin{align*}
		&\mathbb{E}\bigg[\sup\limits_{t\in [0,T]}\left\vert \Vert\theta_t^\delta \Vert_{\dot{H}^{-\frac{\beta}{2}}}^2
		-(2\pi)^\beta \langle \theta_t^\delta,G_\beta^\delta\ast\theta_t^\delta \rangle \right\vert \bigg] \\
		&\leq
		\delta \bigg(\Vert \theta_0^\delta \Vert_{L^1}
		\Vert \theta_0^\delta \Vert_{L^\infty}
		+\Vert \theta_0^\delta \Vert_{L^1}
		\int_{\mathbb{R}^2} \vert x \vert^2
		\vert \theta_0^\delta(x)\vert\, \mathrm{d}x
		+(\Vert \theta_0^\delta \Vert_{L^\infty}^2
		+\Vert \theta_0^\delta \Vert_{L^1}^2+1)
		\Vert \theta_0^\delta \Vert_{L^1}^2\bigg).
	\end{align*}
	Then the assertion is true thanks to the assumption \eqref{condition3} on the initial condition.
\end{proof}

\begin{lem}\label{kernel-term-err}
	Assume \eqref{condition1}, \eqref{condition2} and \eqref{condition3} on the initial condition $\theta_0^\delta$. Suppose $\theta^\delta$ is the unique solution to the regularized model \eqref{reg-mSQG}. We have
	\begin{equation*}
		\sup\limits_{t\in [0,T]}
		\big\Vert \vert \nabla(G_\beta-G_\beta^\delta) \vert \ast \vert \theta_t^\delta \vert \big\Vert_{L^\infty}
		\lesssim \delta^{\frac{\beta-1}{\beta+3}}(\Vert \theta_0^\delta \Vert_{L^\infty}
		+\Vert \theta_0^\delta \Vert_{L^1}).
	\end{equation*}
\end{lem}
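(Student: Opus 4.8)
The plan is to combine the pointwise bound \eqref{1G} from Lemma \ref{errer-kernel} with the uniform $L^p$-estimates $\sup_{t}\|\theta_t^\delta\|_{L^p}\le\|\theta_0^\delta\|_{L^p}$ provided by Lemma \ref{reg-mSQG-sol}. Writing the convolution explicitly,
\[
\big(|\nabla(G_\beta-G_\beta^\delta)|\ast|\theta_t^\delta|\big)(x)=\int_{\R^2}\big|\nabla(G_\beta-G_\beta^\delta)(y)\big|\,\big|\theta_t^\delta(x-y)\big|\,\d y,
\]
I would split the $y$-integral according to the three regimes appearing in \eqref{1G}: the inner region $\{|y|\le\delta^{1/(\beta+3)}\}$, the middle annulus $\{\delta^{1/(\beta+3)}\le|y|\le\delta^{-1/\beta}\}$, and the outer region $\{|y|\ge\delta^{-1/\beta}\}$.

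On the inner region the kernel is controlled by $|y|^{\beta-3}$, which is locally integrable in $\R^2$ precisely because $\beta>1$; pairing it with the $L^\infty$-bound $\|\theta_t^\delta\|_{L^\infty}\le\|\theta_0^\delta\|_{L^\infty}$ and using $\int_{|y|\le R}|y|^{\beta-3}\,\d y\asymp R^{\beta-1}$ with $R=\delta^{1/(\beta+3)}$ produces exactly the advertised factor $\delta^{(\beta-1)/(\beta+3)}\|\theta_0^\delta\|_{L^\infty}$; this is the dominant contribution. On the middle annulus the kernel is bounded uniformly by $\delta^{1/2}$, so the integral is controlled by $\delta^{1/2}\|\theta_t^\delta\|_{L^1}\le\delta^{1/2}\|\theta_0^\delta\|_{L^1}$. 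On the outer region I would exploit the monotonicity of $|y|^{\beta-3}$ (note $\beta-3<0$) to bound it by $(\delta^{-1/\beta})^{\beta-3}=\delta^{(3-\beta)/\beta}$, and pair this with the $L^1$-bound to get $\delta^{(3-\beta)/\beta}\|\theta_0^\delta\|_{L^1}$.

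It then remains to check that the two auxiliary exponents dominate the target one for $\delta<1$, namely $\tfrac12\ge\tfrac{\beta-1}{\beta+3}$ and $\tfrac{3-\beta}{\beta}\ge\tfrac{\beta-1}{\beta+3}$. Both hold for $\beta\in(1,2)$, since $\tfrac{\beta-1}{\beta+3}<\tfrac15$ while $\tfrac{3-\beta}{\beta}>\tfrac12$, so that $\delta^{1/2}$ and $\delta^{(3-\beta)/\beta}$ are each $\le\delta^{(\beta-1)/(\beta+3)}$. Summing the three contributions yields the stated bound, uniformly in $t\in[0,T]$ because every $L^p$-estimate invoked is. I do not expect a serious obstacle here; the only points demanding care are the local integrability of the singular kernel near the origin (which is exactly where the $L^\infty$-norm of $\theta_0^\delta$ is required, the $L^1$-norm handling the two tails) and confirming the ordering of the $\delta$-exponents, which guarantees that the inner region genuinely dictates the rate $\delta^{(\beta-1)/(\beta+3)}$.
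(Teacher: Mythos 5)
Your proof is correct and follows essentially the same route as the paper's: both split the convolution into the three regimes of \eqref{1G}, pair the locally integrable singularity $|y|^{\beta-3}$ on the inner ball with the $L^\infty$-bound and the other two regions with the $L^1$-bound from Lemma \ref{reg-mSQG-sol}, and conclude by comparing $\delta$-exponents. The only cosmetic difference is that on the outer region you retain the sharper exponent $\delta^{(3-\beta)/\beta}$ where the paper settles for $\delta^{1/\beta}$; both dominate $\delta^{(\beta-1)/(\beta+3)}$, so the conclusions coincide.
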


\begin{proof}
	Using the bound \eqref{1G}, we have
	\begin{align*}
		\vert \nabla(G_\beta-G_\beta^\delta) \vert \ast \vert \theta_t^\delta \vert (x)
		&=\int_{\mathbb{R}^2}
		\vert \nabla(G_\beta-G_\beta^\delta) (x-y)\vert \, \vert \theta_t^\delta(y) \vert \,\mathrm{d}y \\
		&\lesssim\int_{\vert x-y \vert\leq \delta^{\frac{1}{\beta+3}}}
		\vert \theta_t^\delta(y) \vert \,
		\vert x-y \vert^{\beta-3} \,\mathrm{d}y \\
		&\quad+\int_{\vert x-y \vert\geq \delta^{-\frac{1}{\beta}}}
		\vert \theta_t^\delta(y) \vert \,
		\vert x-y \vert^{\beta-3} \,\mathrm{d}y  \\
		&\quad+\delta^{\frac{1}{2}}\int_{\delta^{\frac{1}{\beta+3}}\leq
			\vert x-y \vert\leq \delta^{-\frac{1}{\beta}}}
		\vert \theta_t^\delta(y) \vert \,\mathrm{d}y \\
		& \lesssim \delta^{\frac{\beta-1}{\beta+3}}
		\Vert \theta_t^{\delta} \Vert_{L^\infty}
		+\delta^{\frac{1}{\beta}} \Vert \theta_t^{\delta} \Vert_{L^1}
		+\delta^{\frac{1}{2}}\Vert \theta_t^{\delta} \Vert_{L^1}.
	\end{align*}
	Hence, by Lamma \ref{reg-mSQG-sol} and condition \eqref{condition3} we complete the proof.
\end{proof}

\subsection{A priori energy estimate}\label{energy}

In this subsection, our goal is to establish the main energy estimate on the solution $\theta^\delta$ to \eqref{reg-mSQG}, namely the expected value of the energy $\Vert \theta^\delta \Vert_{\dot{H}^{-\beta/2}}^2$. The key point is to obtain a control of the $H^{-\beta/2+1-\alpha}$ norm through the special structure of the Kraichnan covariance $Q$.

Throughout this subsection and the next, we always fix a filtered probability space	$(\Omega,\mathcal{F},(\mathcal{F}_t)_t,\P)$ with the usual conditions and a sequence of independent real $(\mathcal{F}_t)$ Brownian motions $(B^k)_k$ on it. We take $Q^\delta,G_\beta^\delta$ and $K_\beta^\delta$ as before, and $\theta^\delta$ to be the solution of the regularized equation \eqref{reg-mSQG} on $(\Omega,\mathcal{F},(\mathcal{F}_t)_t,\P)$ with the Brownian motions $(B^k)_k$.

We start by computing the approximate $\dot{H}^{-\frac{\beta}{2}}$ norm of the solution in the regularized model.

\begin{lem}\label{Ito}
	We have $\P$-a.s. for every $t\in [0,T]$,
	\begin{equation}
		\begin{split}
			&\langle \theta_t^\delta,G_\beta^\delta \ast \theta_t^\delta \rangle
			-\int_{0}^{t} \iint\limits_{\mathbb{R}^2\times \mathbb{R}^2}
			\mathrm{tr}\big[(Q^\delta(0)-Q^\delta(x-y))D^2G_\beta^\delta(x-y)\big]
			\theta_r^\delta(x)\theta_r^\delta(y)\, \mathrm{d}x\mathrm{d}y\mathrm{d}r   \\
			&=\langle \theta_0^\delta,G_\beta^\delta \ast \theta_0^\delta \rangle
			-2\int_{0}^{t}\sum_{k}
			\langle \sigma_k^\delta\cdot\nabla\theta_r^\delta,G_\beta^\delta \ast \theta_r^\delta \rangle\, \mathrm{d}B_r^k.
		\end{split}
	\end{equation}
	In particular, for every $t\in [0,T]$:
	\begin{equation}
		\begin{split}
			&\mathbb{E}[\langle \theta_t^\delta,G_\beta^\delta \ast \theta_t^\delta \rangle]
			-\langle \theta_0^\delta,G_\beta^\delta \ast \theta_0^\delta \rangle \\
            &= \int_{0}^{t}\mathbb{E}\iint\limits_{\mathbb{R}^2\times \mathbb{R}^2}
			\mathrm{tr}\big[(Q^\delta(0)-Q^\delta(x-y))D^2G_\beta^\delta(x-y)\big]
			\theta_r^\delta(x)\theta_r^\delta(y)\, \mathrm{d}x\mathrm{d}y\mathrm{d}r.
		\end{split}
	\end{equation}
\end{lem}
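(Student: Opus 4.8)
The plan is to apply It\^o's formula to the quadratic functional $F(\theta)=\langle\theta,G_\beta^\delta\ast\theta\rangle$ along the solution $\theta^\delta$ of the regularized equation \eqref{reg-mSQG}. For fixed $\delta$ the coefficients $K_\beta^\delta\ast\,\cdot\,$, $\sigma_k^\delta$ and $c_\delta\Delta$ are smooth and the solution produced by Lemma \ref{reg-mSQG-sol} is regular and bounded in every $L^p$, so all the computations below are legitimate; rigorously one may diagonalise $F$ in Fourier variables, writing $F(\theta)=\int_{\mathbb{R}^2}\hat{G}_\beta^\delta(\xi)|\hat\theta(\xi)|^2\,\mathrm{d}\xi$, or project onto a finite-dimensional subspace and pass to the limit. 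Since $F$ is quadratic and $G_\beta^\delta$ is an even kernel, It\^o's formula reads
$$\mathrm{d}F(\theta_t^\delta)=2\langle G_\beta^\delta\ast\theta_t^\delta,\mathrm{d}\theta_t^\delta\rangle+\sum_k\langle\sigma_k^\delta\cdot\nabla\theta_t^\delta,\,G_\beta^\delta\ast(\sigma_k^\delta\cdot\nabla\theta_t^\delta)\rangle\,\mathrm{d}t,$$
the last sum being the It\^o correction coming from the martingale part $-\sum_k(\sigma_k^\delta\cdot\nabla\theta^\delta)\,\mathrm{d}B^k$ of \eqref{reg-mSQG}. Splitting the first term into its drift and martingale pieces produces three contributions: the nonlinear drift $-2\langle G_\beta^\delta\ast\theta^\delta,(K_\beta^\delta\ast\theta^\delta)\cdot\nabla\theta^\delta\rangle$, the viscous drift $c_\delta\langle G_\beta^\delta\ast\theta^\delta,\Delta\theta^\delta\rangle$, and the stochastic integral $-2\sum_k\langle G_\beta^\delta\ast\theta^\delta,\sigma_k^\delta\cdot\nabla\theta^\delta\rangle\,\mathrm{d}B^k$.

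Next I would check that the nonlinear drift vanishes. Writing $\psi=G_\beta^\delta\ast\theta^\delta$ and $u=K_\beta^\delta\ast\theta^\delta$, the identity $K_\beta^\delta=\nabla^\perp G_\beta^\delta$ gives $u=\nabla^\perp\psi$, which is divergence free, so $u\cdot\nabla\theta^\delta=\mathrm{div}(u\,\theta^\delta)$. Integrating by parts,
$$\langle\psi,u\cdot\nabla\theta^\delta\rangle=-\int_{\mathbb{R}^2}(\nabla\psi\cdot u)\,\theta^\delta\,\mathrm{d}x=-\int_{\mathbb{R}^2}(\nabla\psi\cdot\nabla^\perp\psi)\,\theta^\delta\,\mathrm{d}x=0,$$
since $\nabla\psi\cdot\nabla^\perp\psi\equiv0$ pointwise. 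This is the transport-structure cancellation that, for $\beta\in(1,2)$, replaces the simpler identity $u\cdot\nabla\theta=\nabla^\perp\cdot\mathrm{div}(u\otimes u)$ available only in the Euler case $\beta=2$.

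The core of the argument is to fold the viscous drift and the It\^o correction into the stated trace expression. Using $Q^\delta(0)=c_\delta I_2$ from \eqref{eq-c-delta} and moving the Laplacian onto the kernel by two integrations by parts (so that $c_\delta\Delta G_\beta^\delta=\mathrm{tr}[Q^\delta(0)D^2G_\beta^\delta]$),
$$c_\delta\langle G_\beta^\delta\ast\theta^\delta,\Delta\theta^\delta\rangle=\iint_{\mathbb{R}^2\times\mathbb{R}^2}\mathrm{tr}\big[Q^\delta(0)D^2G_\beta^\delta(x-y)\big]\theta^\delta(x)\theta^\delta(y)\,\mathrm{d}x\mathrm{d}y.$$
For the It\^o correction I would use that each $\sigma_k^\delta$ is divergence free, whence $\sigma_k^\delta\cdot\nabla\theta^\delta=\mathrm{div}(\sigma_k^\delta\theta^\delta)$; integrating by parts once in $x$ and once in $y$ and summing over $k$ with $\sum_k\sigma_k^\delta(x)\otimes\sigma_k^\delta(y)=Q^\delta(x-y)$ gives
$$\sum_k\langle\sigma_k^\delta\cdot\nabla\theta^\delta,\,G_\beta^\delta\ast(\sigma_k^\delta\cdot\nabla\theta^\delta)\rangle=-\iint_{\mathbb{R}^2\times\mathbb{R}^2}\mathrm{tr}\big[Q^\delta(x-y)D^2G_\beta^\delta(x-y)\big]\theta^\delta(x)\theta^\delta(y)\,\mathrm{d}x\mathrm{d}y.$$
Adding the two displays yields exactly the $\mathrm{tr}\big[(Q^\delta(0)-Q^\delta(x-y))D^2G_\beta^\delta(x-y)\big]$ integrand, and integrating in time from $0$ to $t$ produces the first asserted identity.

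Finally, for the second identity I would take expectations. The stochastic integral is a true martingale, not merely a local one: by Lemma \ref{reg-mSQG-sol} the norms $\|\theta_r^\delta\|_{L^p}$ are bounded uniformly in $r$ and $\sigma_k^\delta,G_\beta^\delta$ are smooth and integrable, so its bracket has finite expectation and its expectation vanishes, leaving only the trace term. I expect the genuine obstacle to be not the algebra but the rigorous justification of It\^o's formula for this quadratic functional together with the interchange of the infinite $k$-sum with expectation and time integration; this is precisely where the Fourier diagonalisation of $F$ and the uniform bound \eqref{Q-converge} on $\sum_k|\sigma_k^\delta|^2$ are used, and the details are carried out as in \cite[Appendix C]{CogMau}.
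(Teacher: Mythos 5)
Your proposal is correct and follows essentially the same route as the paper: Itô's formula for the quadratic functional $\langle\theta,G_\beta^\delta\ast\theta\rangle$, cancellation of the nonlinear drift via $\nabla\psi\cdot\nabla^\perp\psi\equiv 0$, conversion of the viscous drift and the Itô correction into the trace expression using $Q^\delta(0)=c_\delta I_2$ and $Q^\delta(x-y)=\sum_k\sigma_k^\delta(x)\otimes\sigma_k^\delta(y)$, and integrability of the martingale's quadratic variation to kill it under expectation. The only cosmetic difference is that the paper invokes the Hilbert-space Itô formula of Da Prato--Zabczyk on $H^{-4}$ directly, whereas you sketch a Fourier/finite-dimensional justification, which amounts to the same thing.
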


\begin{proof}
	Consider the functional $F:H^{-4}\ni \theta\rightarrow \langle \theta,G_\beta^\delta \ast \theta \rangle\in \mathbb{R}$,
	whose Fr\'echet derivatives are
	\begin{align*}
		DF(\theta)v&=2\langle v,G_\beta^\delta \ast \theta \rangle, \\
		D^2F(\theta)[v,w]&=2\langle v,G_\beta^\delta \ast w \rangle.
	\end{align*}
	The classical It\^o formula on Hilbert space (see \cite[Theorem 4.17]{DaPrato1992}) implies that
	\begin{equation}
		\begin{split}
			\mathrm{d}\langle \theta^\delta,G_\beta^\delta \ast \theta^\delta \rangle
			&=-2\big\langle (K_\beta^\delta \ast \theta^\delta)\cdot\nabla \theta^\delta,
			G_\beta^\delta \ast \theta^\delta \big\rangle \, \mathrm{d}t
			-2\sum_{k} \langle \sigma_k^\delta\cdot\nabla \theta^\delta,
			G_\beta^\delta \ast \theta^\delta \rangle \, \mathrm{d}B^k \\
			&\quad+\langle c_\delta\Delta\theta^\delta,G_\beta^\delta \ast \theta^\delta  \rangle \, \mathrm{d}t
			+\sum_{k}\langle \sigma_k^\delta\cdot\nabla \theta^\delta,G_\beta^\delta \ast (\sigma_k^\delta\cdot\nabla \theta^\delta) \rangle \, \mathrm{d}t.
		\end{split}\label{ItoFormula}
	\end{equation}
	Note that $K_\beta^\delta=\nabla^\perp G_\beta^\delta$ by definition, then by integration by parts, we obtain
	\begin{equation*}
		\langle (K_\beta^\delta \ast \theta^\delta)\cdot\nabla \theta^\delta,
		G_\beta^\delta \ast \theta^\delta \rangle
		=-\langle \theta^\delta,
		\nabla^\perp (G_\beta^\delta \ast \theta^\delta)
		\cdot\nabla (G_\beta^\delta \ast \theta^\delta)\rangle =0.
	\end{equation*}
	The quadratic variation of the martingale term is integrable, indeed,
	\begin{align*}
		\mathbb{E}\sum_{k}\vert \langle \sigma_k^\delta\cdot\nabla \theta^\delta,
		G_\beta^\delta \ast \theta^\delta \rangle \vert^2
		&=\mathbb{E} \iint\limits_{\mathbb{R}^2\times \mathbb{R}^2}
		\theta^\delta(x)\theta^\delta(y)\nabla G_\beta^\delta \ast \theta^\delta(x)\cdot Q^\delta(x-y)
		\nabla G_\beta^\delta \ast \theta^\delta(y) \, \mathrm{d}x\mathrm{d}y  \\
		&\leq\Vert Q^\delta \Vert_{L^\infty} \Vert \theta_0^\delta \Vert_{L^1}^4\Vert \nabla G_\beta^\delta\Vert_{L^{\infty}}^2
		<\infty.
	\end{align*}
	Hence the martingale term vanishes after taking expectation.
	Let us turn to the  last two terms. Notice that $Q^\delta(0)=c_\delta I_2$, then
	\begin{align*}
		\langle c_\delta\Delta\theta^\delta,G_\beta^\delta \ast \theta^\delta  \rangle =
		\langle \theta^\delta,c_\delta\Delta G_\beta^\delta \ast \theta^\delta  \rangle
		=\iint\limits_{\mathbb{R}^2\times \mathbb{R}^2}
		\mathrm{tr}\big[Q^\delta(0)D^2G_\beta^\delta(x-y)\big]
		\theta^\delta(x)\theta^\delta(y) \, \mathrm{d}x\mathrm{d}y,
	\end{align*}
	and
	\begin{align*}
		\sum_{k}\langle \sigma_k^\delta\cdot\nabla \theta^\delta,G_\beta^\delta \ast (\sigma_k^\delta\cdot\nabla \theta^\delta) \rangle
		&= \sum_{k}\sum_{i,j}\langle \partial_i(\sigma_k^{\delta,i}\theta^\delta),
		\partial_jG_\beta^\delta \ast (\sigma_k^{\delta,j}\theta^\delta) \rangle \\
		&=-\sum_{i,j}\sum_{k}\langle \sigma_k^{\delta,i}\theta^\delta,
		\partial_{ij}^2G_\beta^\delta \ast (\sigma_k^{\delta,j}\theta^\delta) \rangle  \\
		&=-\iint\limits_{\mathbb{R}^2\times \mathbb{R}^2}
		\mathrm{tr}\big[Q^\delta D^2G_\beta^\delta\big](x-y)\theta^\delta(x)\theta^\delta(y)\, \mathrm{d}x\mathrm{d}y.
	\end{align*}
	Substituting all above into \eqref{ItoFormula}, we complete the proof.
\end{proof}

Combining Lemmas \ref{approx-energy} and  \ref{Ito} and the condition   \eqref{condition2}, we get the following bound.

\begin{cor}\label{ItoCor}
	We have, for every $t\in [0,T]$,
	\begin{align*}
		&\mathbb{E}\left[\Vert \theta_t^\delta \Vert_{\dot{H}^{-\frac{\beta}{2}}}^2\right]
		-(2\pi)^\beta\int_{0}^{t}\mathbb{E}\iint\limits_{\mathbb{R}^2\times \mathbb{R}^2}
		\mathrm{tr}\big[(Q^{\delta}(0)-Q^\delta) D^2G_\beta^\delta\big](x-y)\theta_r^\delta(x)\theta_r^\delta(y)\, \mathrm{d}x\mathrm{d}y\mathrm{d}r \\
		&=\Vert \theta_0 \Vert_{\dot{H}^{-\frac{\beta}{2}}}^2 + o(1),
	\end{align*}
	where $o(1)$ tends to $0$ as $\delta\rightarrow 0$.
\end{cor}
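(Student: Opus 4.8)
The plan is to assemble the corollary directly from the expectation identity in Lemma~\ref{Ito} and the approximation control in Lemma~\ref{approx-energy}, using condition~\eqref{condition2} only to pass the initial datum to the limit. No new analytic estimate on the double integral (the ``noise'' term) is required: I keep that term untouched and only convert the two boundary quantities $\langle\theta_t^\delta,G_\beta^\delta\ast\theta_t^\delta\rangle$ and $\langle\theta_0^\delta,G_\beta^\delta\ast\theta_0^\delta\rangle$ into genuine $\dot H^{-\frac\beta2}$ norms up to errors vanishing as $\delta\to0$.

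First I would take the second (expectation) identity of Lemma~\ref{Ito} and multiply both sides by $(2\pi)^\beta$, so that $(2\pi)^\beta\mathbb{E}[\langle\theta_t^\delta,G_\beta^\delta\ast\theta_t^\delta\rangle] - (2\pi)^\beta\langle\theta_0^\delta,G_\beta^\delta\ast\theta_0^\delta\rangle$ equals precisely the double-integral term displayed in the corollary. It then remains to replace each of the two left-hand terms by the corresponding $\dot H^{-\frac\beta2}$ energy. For the time-$t$ term, Lemma~\ref{approx-energy} bounds the supremum over $[0,T]$ of the expected discrepancy, which in particular dominates the fixed-time quantity; hence $\mathbb{E}\big[\,\big|\,\|\theta_t^\delta\|_{\dot H^{-\frac\beta2}}^2 - (2\pi)^\beta\langle\theta_t^\delta,G_\beta^\delta\ast\theta_t^\delta\rangle\,\big|\,\big]\to0$, giving $(2\pi)^\beta\mathbb{E}[\langle\theta_t^\delta,G_\beta^\delta\ast\theta_t^\delta\rangle] = \mathbb{E}[\|\theta_t^\delta\|_{\dot H^{-\frac\beta2}}^2] + o(1)$.

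For the initial term, note that $\theta_0^\delta$ is deterministic, so the $t=0$ instance of Lemma~\ref{approx-energy} (its supremum already covers $t=0$, and condition~\eqref{condition3} is what makes that bound $o(1)$) yields $(2\pi)^\beta\langle\theta_0^\delta,G_\beta^\delta\ast\theta_0^\delta\rangle = \|\theta_0^\delta\|_{\dot H^{-\frac\beta2}}^2 + o(1)$; then condition~\eqref{condition2} gives $\|\theta_0^\delta\|_{\dot H^{-\frac\beta2}}^2 \to \|\theta_0\|_{\dot H^{-\frac\beta2}}^2$, so the initial term equals $\|\theta_0\|_{\dot H^{-\frac\beta2}}^2 + o(1)$. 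Substituting these two replacements into the scaled identity and moving both $o(1)$ contributions to the right-hand side produces exactly the claimed equality.

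I do not expect a genuine obstacle: the substantive work is already contained in Lemmas~\ref{Ito} and~\ref{approx-energy}, and the corollary is essentially bookkeeping. The only point demanding a little care is confirming that the single $o(1)$ in the final statement legitimately absorbs both error contributions, namely the one from the time-$t$ term and the one from the initial datum; this is immediate since each tends to $0$ as $\delta\to0$, and the double-integral term is carried through unchanged.
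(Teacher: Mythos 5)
Your proposal is correct and follows exactly the paper's route: the paper proves this corollary precisely by combining the expectation identity of Lemma \ref{Ito} with Lemma \ref{approx-energy} (whose supremum-in-time bound makes both the time-$t$ and the $t=0$ replacements uniform $o(1)$ errors) and condition \eqref{condition2} for the initial datum. Your bookkeeping, including the observation that the sup in Lemma \ref{approx-energy} covers the deterministic $t=0$ term and that the double-integral term is carried through unchanged, is the intended argument.
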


Due to Corollary \ref{ItoCor}, our task now is to bound the term $\mathrm{tr}\big[(Q^{\delta}(0)-Q^\delta) D^2G_\beta^\delta\big]$. We split this term as follows: for every $x\neq0$,
\begin{equation}
	\begin{split}
		\mathrm{tr}\big[(Q^{\delta}(0)-Q^\delta(x)) D^2G_\beta^\delta(x)\big]
		&=\mathrm{tr}\big[(Q(0)-Q(x))D^2G_\beta(x)\big]\varphi(x)  \\
		&\quad+\mathrm{tr}\big[(Q(0)-Q(x))D^2(G_\beta^\delta-G_\beta)(x)\big]\varphi(x) \\
		&\quad+\mathrm{tr}\big[\big( Q^{\delta}(0)-Q^\delta(x)-(Q(0)-Q(x)) \big) D^2G_\beta^\delta(x)\big]\varphi(x) \\
		&\quad+\mathrm{tr}\big[(Q^{\delta}(0)-Q^\delta(x)) D^2G_\beta^\delta(x)\big](1-\varphi(x)) \\
		&=:A(x)+R_1(x)+R_2(x)+R_3(x),
	\end{split}\label{keyterm}
\end{equation}
where $\varphi$ is a radial smooth function satisfying $0\leq \varphi \leq1 $ everywhere, $\varphi(x)=1$ for $\vert x \vert\leq1$ and $\varphi(x)=0$ for $\vert x \vert\geq2$.

As we will see, $A$ is the key term in \eqref{keyterm}.

\begin{lem}\label{A-bound}
	Suppose $2\alpha+\beta>2$, then for the term $A(x) =\mathrm{tr}\big[(Q(0)-Q(x))D^2G_\beta(x)\big]\varphi(x)$, there exist two positive constants $c,C$ such that
	\begin{equation*}
		\hat{A}(\xi)\leq -c\langle \xi \rangle^{-(2\alpha+\beta-2)}
		+C\langle \xi \rangle^{-\beta}.
	\end{equation*}
\end{lem}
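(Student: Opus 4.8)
The plan is to reduce the desired bound to explicit pointwise asymptotics of $g(x):=\mathrm{tr}\big[(Q(0)-Q(x))D^2G_\beta(x)\big]$ and then read off the Fourier behaviour. First I would diagonalise both matrices in the radial/tangential frame given by the orthogonal projections $P:=\frac{x\otimes x}{|x|^2}$ and $I_2-P$. By Lemma \ref{Q-structure},
$$Q(0)-Q(x)=\big(\beta_L|x|^{2\alpha}+\mathrm{Rem}_{1-u^2}\big)P+\big(\beta_N|x|^{2\alpha}+\mathrm{Rem}_{u^2}\big)(I_2-P),$$
while the formula for $D^2G_\beta$ gives $D^2G_\beta(x)=-\frac{2-\beta}{\gamma(\beta)}|x|^{\beta-4}\big((\beta-3)P+(I_2-P)\big)$. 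Since $P$ and $I_2-P$ are complementary projections, the product is again diagonal in this frame, and using $\beta_N=(1+2\alpha)\beta_L$ the coefficient of $|x|^{2\alpha}$ collapses to $(\beta-3)\beta_L+\beta_N=\beta_L(2\alpha+\beta-2)$. Hence
$$g(x)=-C_0\,|x|^{2\alpha+\beta-4}+h(x),\qquad C_0=\frac{(2-\beta)\,\beta_L\,(2\alpha+\beta-2)}{\gamma(\beta)},$$
where the remainder $h$ satisfies $|h(x)|\lesssim|x|^{\beta-2}$, coming from $\mathrm{Rem}_{1-u^2},\mathrm{Rem}_{u^2}=O(|x|^2)$ multiplied by $|x|^{\beta-4}$. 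The hypothesis $2\alpha+\beta>2$ (together with $\beta<2$ and $\beta_L>0$) is exactly what forces $C_0>0$, producing the crucial negative leading coefficient.

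Next I would split $A=M+r$ with $M(x)=-C_0|x|^{2\alpha+\beta-4}\varphi(x)$ and $r(x)=h(x)\varphi(x)$, both compactly supported, and estimate the two Fourier transforms separately. For $M$, write $M=-C_0|x|^{-s}+C_0|x|^{-s}(1-\varphi)$ with $s:=4-2\alpha-\beta\in(0,2)$. By the classical Riesz identity in $\R^2$ the homogeneous piece has Fourier transform $-C_0c_s|\xi|^{s-2}=-C_0c_s|\xi|^{-(2\alpha+\beta-2)}$ with $c_s>0$; the piece $|x|^{-s}(1-\varphi)$ is smooth and, since enough of its derivatives are integrable at infinity, its Fourier transform decays faster than any power for large $\xi$. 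Thus $\hat M(\xi)=-C_0c_s|\xi|^{-(2\alpha+\beta-2)}+O(\langle\xi\rangle^{-N})$ for large $\xi$. For $r$, whose only singularity is the $|x|^{\beta-2}$ behaviour at the origin, a dyadic localisation around $x=0$ (the annulus $|x|\sim 2^{-j}$ contributes $\lesssim 2^{-j\beta}$ in $L^1$, with derivatives scaling homogeneously) yields $|\hat r(\xi)|\lesssim\langle\xi\rangle^{-\beta}$.

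Combining, for $|\xi|$ large I get $\hat A(\xi)=-C_0c_s|\xi|^{-(2\alpha+\beta-2)}+O(|\xi|^{-\beta})$; since $\alpha<1$ gives $2\alpha+\beta-2<\beta$, the negative term dominates, and after replacing $|\xi|$ by the comparable $\langle\xi\rangle$ this produces $\hat A(\xi)\le-c\langle\xi\rangle^{-(2\alpha+\beta-2)}+C\langle\xi\rangle^{-\beta}$ on $\{|\xi|\ge R_0\}$ for suitable $c,C>0$. On the complementary bounded set, $\hat A$ is continuous and bounded (as $A\in L^1$, being compactly supported with integrable singularity $|x|^{2\alpha+\beta-4}$), while both weights on the right are bounded below by positive constants there, so the inequality persists after enlarging $C$.

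The main obstacle is this Fourier step: isolating the leading homogeneous symbol of the compactly supported, singular main term $M$ with the correct negative sign and a genuinely faster-decaying error, and proving the $O(\langle\xi\rangle^{-\beta})$ decay for the remainder $r$ from its $|x|^{\beta-2}$ singularity. The latter is not automatic from the mere size bound $|h|\lesssim|x|^{\beta-2}$ and requires homogeneous derivative estimates $|\partial^\gamma h(x)|\lesssim|x|^{\beta-2-|\gamma|}$, which in turn hinge on corresponding derivative bounds for $\mathrm{Rem}_{1-u^2},\mathrm{Rem}_{u^2}$; this parallels the analogous key estimate in \cite{CogMau}.
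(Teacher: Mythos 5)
Your proposal is correct, and its skeleton coincides with the paper's proof of Lemma \ref{A-bound}: the same diagonalisation of $Q(0)-Q(x)$ and $D^2G_\beta(x)$ in the frame $\big\{\frac{x\otimes x}{|x|^2},\,I_2-\frac{x\otimes x}{|x|^2}\big\}$ from Lemma \ref{Q-structure}, the same collapse of the leading coefficient to $\beta_L(2\alpha+\beta-2)$ via $\beta_N=(1+2\alpha)\beta_L$ (this is exactly where the hypothesis $2\alpha+\beta>2$ enters in the paper as well), and the same decomposition of $A$ into a negative multiple of the truncated Riesz kernel plus a remainder of size $|x|^{\beta-2}$. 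Where you genuinely differ is in how the two Fourier estimates are established. For the main term, the paper cites \cite[Lemma 4.3]{CogMau} for the lower bound $\widehat{\varphi G}_{2\alpha+\beta-2}(\xi)\geq \frac{1}{2}(2\pi)^{2-2\alpha-\beta}\langle \xi\rangle^{2-2\alpha-\beta}-C\langle\xi\rangle^{-\beta}$, whereas you rederive it by writing $|x|^{2\alpha+\beta-4}\varphi=|x|^{2\alpha+\beta-4}-|x|^{2\alpha+\beta-4}(1-\varphi)$, using the exact homogeneous Fourier transform of the first piece and the rapid decay (all derivatives of order $\geq 2$ being integrable) of the smooth tail; this is a valid, self-contained substitute. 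For the remainder, the paper's Lemma \ref{A.2} (a Sobolev/Hausdorff--Young argument) only yields decay $\langle\xi\rangle^{-2+\epsilon}$ with $\epsilon>2-\beta$, which is \emph{slower} than $\langle\xi\rangle^{-\beta}$, and the paper therefore needs the extra Young-inequality absorption step with the small parameter $\bar{\delta}$; your dyadic decomposition, if carried out, gives the stronger bound $\langle\xi\rangle^{-\beta}$ directly and dispenses with that absorption. The one input you flag as missing --- the homogeneous derivative bounds $|\nabla h(x)|\lesssim |x|^{\beta-3}$, $|D^2h(x)|\lesssim|x|^{\beta-4}$ for the remainder --- is precisely what the paper establishes in the appendix (the lemma preceding Lemma \ref{A.2}, taken from \cite[Lemma A.1]{CogMau}), so your argument closes using ingredients already available in the paper.
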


\begin{proof}
	According to the structure of the covariance matrix $Q$ in Lemma \ref{Q-structure}, we get, for every $x\neq 0$,
	\begin{align*}
		&\mathrm{tr}\big[(Q(0)-Q(x))D^2G_\beta(x)\big]  \\
		=&-\frac{2-\beta}{\gamma(\beta)}\frac{1}{\vert x \vert^{4-\beta}}
		\mathrm{tr}\bigg[  \Big( (B_N(0)-B_N(\vert x \vert))I_2
		+(B_N(\vert x \vert)-B_L(\vert x \vert))\frac{x\otimes x}{\vert x \vert^2} \Big)\Big(I_2-(4-\beta)\frac{x\otimes x}{\vert x\vert^2}\Big)\bigg] \\
		=&-\frac{2-\beta}{\gamma(\beta)}\frac{1}{\vert x \vert^{4-\beta}}
		\big[ (3-\beta)B_L(\vert x \vert)-B_N(\vert x \vert)-(2-\beta)B_N(0) \big]   \\
		=&-\frac{2-\beta}{\gamma(\beta)}\frac{1}{\vert x \vert^{4-\beta}}
		\big[ (\beta_N-(3-\beta)\beta_L)\vert x \vert^{2\alpha}
		+\mathrm{Rem}_{u^2}(\vert x \vert)-(3-\beta)\mathrm{Rem}_{1-u^2}(\vert x \vert) \big].
	\end{align*}
	Recalling the fact $\beta_N=(1+2\alpha)\beta_L$ in Lemma \ref{Q-structure} and the definition of Riesz kernel, we have
	\begin{equation}
		\begin{split}
			\mathrm{tr}\big[(Q(0)-Q(x))D^2G_\beta(x)\big]
			=&-\frac{2-\beta}{\gamma(\beta)}(2\alpha+\beta-2)\beta_L
			\gamma(2\alpha+\beta-2)G_{2\alpha+\beta-2}(x)  \\
			\quad&+\frac{2-\beta}{\gamma(\beta)}\frac{1}{\vert x \vert^{4-\beta}}\big((3-\beta)\mathrm{Rem}_{1-u^2}(\vert x \vert) -\mathrm{Rem}_{u^2}(\vert x \vert)\big).
		\end{split}
	\end{equation}
	
	For the control of $\varphi G_{2\alpha+\beta-2}$ in Fourier modes, by \cite[Lemma 4.3]{CogMau}, there exists some constant $C>0$ such that
	\begin{equation*}
		\widehat{\varphi G}_{2\alpha+\beta-2}(\xi)
		\geq \frac{1}{2}(2\pi)^{2-2\alpha-\beta}
		\langle \xi \rangle^{2-2\alpha-\beta}-C\langle \xi \rangle^{-\beta}.
	\end{equation*}
Concerning the remainder terms, taking $2-\beta<\epsilon<(4-2\alpha-\beta)\wedge1$ fixed, by Lemma \ref{A.2} we get
	\begin{equation*}
		\big| \mathcal{F} \big(\vert\cdot\vert^{\beta-4}\mathrm{Rem}_{1-u^2}(\vert\cdot\vert)\varphi \big)(\xi) \big|
		+ \big| \mathcal{F} \big(\vert\cdot\vert^{\beta-4}\mathrm{Rem}_{u^2}(\vert\cdot\vert)\varphi \big)(\xi) \big|
		\lesssim \langle \xi \rangle^{-2+\epsilon}.
	\end{equation*}
	Therefore, by Young's inequality, for $\bar{\delta}$ to be determined later and a constant $C_{\bar{\delta}}>0$,
	\begin{equation*}
		\big| \mathcal{F} \big(\vert\cdot\vert^{\beta-4}\mathrm{Rem}_{1-u^2}(\vert\cdot\vert)\varphi \big)(\xi) \big| + \big| \mathcal{F} \big(\vert\cdot\vert^{\beta-4}\mathrm{Rem}_{u^2}(\vert\cdot\vert)\varphi \big)(\xi) \big|
		\lesssim \bar{\delta}\langle \xi \rangle^{2-2\alpha-\beta}+C_{\bar{\delta}}\langle \xi \rangle^{-\beta}.
	\end{equation*}
	Now choosing $\bar{\delta}=(2\alpha+\beta-2)\frac{(2-\beta)\gamma(2\alpha+\beta-2)}{4(2\pi)^{2\alpha+\beta-2}\gamma(\beta)}\beta_L$, we conclude that for some constant $C>0$,
	\begin{equation*}
		\hat{A}(\xi)\leq -(2\alpha+\beta-2) \frac{(2-\beta)\gamma(2\alpha+\beta-2)\beta_L}{4(2\pi)^{2\alpha+\beta-2}\gamma(\beta)}\langle \xi \rangle^{2-2\alpha-\beta}
		+C\langle \xi \rangle^{-\beta}.
	\end{equation*}
	The proof is complete.
\end{proof}

Now we turn to bound the terms $R_1=\mathrm{tr}\big[(Q(0)-Q(x))D^2(G_\beta^\delta-G_\beta)(x)\big]\varphi(x)$  and $R_2=\mathrm{tr}\big[(Q^{\delta}(0)-Q^\delta(x)-(Q(0)-Q(x)))D^2G_\beta^\delta(x)\big]\varphi(x)$. In fact, both of them are small.

\begin{lem}\label{R1R2-bound}
	For any $0<\epsilon<2\alpha$
	, we have
	\begin{align*}
		\vert R_1(x)\vert &\lesssim \delta\varphi(x)+\vert x \vert^{2\alpha+\beta-4}{\bf1}_{\{\vert x \vert \leq \delta^{1/(4+\beta)}\}}\varphi(x),  \\
		\vert R_2(x)\vert &\lesssim_\epsilon \delta^{\epsilon}
		\vert x \vert^{2\alpha+\beta-4-\epsilon}\varphi(x).
	\end{align*}
\end{lem}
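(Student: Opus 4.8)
Both $R_1$ and $R_2$ are traces of products of two $2\times2$ matrices, so the plan is to start from the elementary inequality $|\mathrm{tr}(AB)|\lesssim|A|\,|B|$, which decouples the covariance factor from the kernel factor. For $R_1$ this gives $|R_1(x)|\le|Q(0)-Q(x)|\,|D^2(G_\beta-G_\beta^\delta)(x)|\,\varphi(x)$, and I would insert $|Q(0)-Q(x)|\lesssim|x|^{2\alpha}\wedge1$ from Lemma~\ref{Q-structure} together with the $D^2(G_\beta-G_\beta^\delta)$ bound $\lesssim\delta+|x|^{\beta-4}\mathbf{1}_{\{|x|\le\delta^{1/(4+\beta)}\}}$ from Lemma~\ref{errer-kernel}. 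Pairing the $\delta$-part with $|x|^{2\alpha}\wedge1\le1$ gives $\delta\varphi(x)$, and pairing the singular part with $|x|^{2\alpha}\wedge1\le|x|^{2\alpha}$ gives $|x|^{2\alpha+\beta-4}\mathbf{1}_{\{|x|\le\delta^{1/(4+\beta)}\}}\varphi(x)$, which is exactly the asserted bound for $R_1$.

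For $R_2$, the same matrix inequality together with $|D^2G_\beta^\delta(x)|\lesssim|x|^{\beta-4}$ (\eqref{0G} with $m=2$) reduces everything to the covariance estimate $D(x):=\big|(Q^\delta(0)-Q^\delta(x))-(Q(0)-Q(x))\big|\lesssim_\epsilon\delta^\epsilon|x|^{2\alpha-\epsilon}$ for $0<\epsilon<2\alpha$. Writing $D(x)=|E^\delta(0)-E^\delta(x)|$ with $E^\delta:=Q^\delta-Q$, I would derive this by interpolating two $\delta$-uniform bounds. Since $\widehat{E^\delta}(\xi)=\hat Q(\xi)\big((\widehat{\rho^\delta}(\xi))^2-1\big)$ by \eqref{Q-delta} is supported on $\{|\xi|\ge1/\delta\}$ and dominated there by $|\hat Q(\xi)|\lesssim\langle\xi\rangle^{-(2+2\alpha)}$, one gets $D(x)\le2\|E^\delta\|_{L^\infty}\le2\|\widehat{E^\delta}\|_{L^1}\lesssim\int_{|\xi|\ge1/\delta}\langle\xi\rangle^{-(2+2\alpha)}\,\mathrm{d}\xi\lesssim\delta^{2\alpha}$; the second bound is the Hölder-type estimate $D(x)\lesssim|x|^{2\alpha}$ discussed below. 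Interpolating, $D(x)=D(x)^{\epsilon/(2\alpha)}D(x)^{1-\epsilon/(2\alpha)}\lesssim(\delta^{2\alpha})^{\epsilon/(2\alpha)}(|x|^{2\alpha})^{1-\epsilon/(2\alpha)}=\delta^\epsilon|x|^{2\alpha-\epsilon}$, and hence $|R_2(x)|\lesssim\delta^\epsilon|x|^{2\alpha+\beta-4-\epsilon}\varphi(x)$.

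The technical heart, and the step I expect to be the main obstacle, is the $\delta$-uniform Hölder bound $|Q^\delta(0)-Q^\delta(x)|\lesssim|x|^{2\alpha}$, which together with Lemma~\ref{Q-structure} yields $D(x)\lesssim|x|^{2\alpha}$. For $|x|\ge\delta$ a comparison suffices: $|Q^\delta(0)-Q^\delta(x)|\le|Q(0)-Q(x)|+2\|E^\delta\|_{L^\infty}\lesssim|x|^{2\alpha}+\delta^{2\alpha}\lesssim|x|^{2\alpha}$. The delicate regime is $|x|<\delta$, where a crude estimate only gives $|x|\,\delta^{2\alpha-1}$, which is too large for small $\epsilon$ when $\alpha>\tfrac12$; here I would use that $Q$, and hence the smooth function $Q^\delta$, is even, so $\nabla Q^\delta(0)=0$ and the first-order Taylor term vanishes, leaving $|Q^\delta(0)-Q^\delta(x)|\lesssim|x|^2\|D^2Q^\delta\|_{L^\infty}$. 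A Fourier computation using \eqref{Q-delta} and the support of $(\widehat{\rho^\delta})^2$ in $\{|\xi|\le2/\delta\}$ gives $\|D^2Q^\delta\|_{L^\infty}\lesssim\int_{|\xi|\le2/\delta}|\xi|^2\langle\xi\rangle^{-(2+2\alpha)}\,\mathrm{d}\xi\lesssim\delta^{2\alpha-2}$, so that $|Q^\delta(0)-Q^\delta(x)|\lesssim|x|^2\delta^{2\alpha-2}\le|x|^{2\alpha}$ for $|x|<\delta$, uniformly in $\alpha\in(0,1)$. This cancellation coming from the evenness of $Q$ is precisely what saves the estimate down to arbitrarily small $\epsilon$.
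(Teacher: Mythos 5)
Your proof is correct. The $R_1$ part coincides with the paper's argument: both insert $\vert Q(0)-Q(x)\vert\lesssim\vert x\vert^{2\alpha}\wedge1$ from Lemma \ref{Q-structure} and the bound on $D^2(G_\beta-G_\beta^\delta)$ from Lemma \ref{errer-kernel}, absorbing the factor $\vert x\vert^{2\alpha}\le 2^{2\alpha}$ on the support of $\varphi$. For $R_2$ you take a genuinely different route. The paper stays entirely in Fourier variables: it writes $Q^\delta(0)-Q^\delta(x)-(Q(0)-Q(x))$ as an integral over $\{\vert\xi\vert\ge1/\delta\}$ of $\langle\xi\rangle^{-2-2\alpha}\big(1-\cos(2\pi x\cdot\xi)\big)\big(\widehat{\rho^\delta}(\xi)^2-1\big)$ against the projection matrix, applies the pointwise oscillation bound $\vert1-\cos a\vert\le\frac12 a^2\wedge2\le2\vert a\vert^{2\alpha-\epsilon}$ inside the integral, and integrates the tail to get $\frac1\epsilon\,\delta^\epsilon\vert x\vert^{2\alpha-\epsilon}$ in one stroke. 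You instead establish two endpoint bounds for $D(x)=\vert E^\delta(0)-E^\delta(x)\vert$ with $E^\delta=Q^\delta-Q$: the tail bound $D\lesssim\delta^{2\alpha}$ (via $\Vert\widehat{E^\delta}\Vert_{L^1}$ and the support restriction $\vert\xi\vert\ge1/\delta$), and the $\delta$-uniform H\"older bound $D\lesssim\vert x\vert^{2\alpha}$, where the delicate regime $\vert x\vert<\delta$ is handled by Taylor expansion at the origin, using evenness of $Q^\delta$ to kill the first-order term together with $\Vert D^2Q^\delta\Vert_{L^\infty}\lesssim\delta^{2\alpha-2}$; you then interpolate pointwise with exponent $\theta=\epsilon/(2\alpha)\in(0,1)$. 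Both proofs rest on the same two mechanisms, namely smallness coming from the high-frequency support of $\widehat{Q^\delta}-\hat Q$ and second-order cancellation coming from evenness (the paper's inequality $\vert1-\cos a\vert\le\frac12a^2$ is exactly your vanishing-gradient Taylor step, executed frequency by frequency), but the packaging differs, and yours buys a small bonus: the implied constant is dominated by the maximum of the two endpoint constants, hence uniform in $\epsilon$, whereas the paper's tail integral $\int_{1/\delta}^\infty\rho^{-1-\epsilon}\,\mathrm{d}\rho$ produces a $1/\epsilon$ blow-up (harmless here, since the statement allows $\lesssim_\epsilon$). The paper's version is shorter because the interpolation happens at the level of the integrand, with no case distinction between $\vert x\vert<\delta$ and $\vert x\vert\ge\delta$.
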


\begin{proof}
	For the term $R_1$, by Lemmas \ref{Q-structure} and \ref{errer-kernel}, we have
	\begin{align*}
		\vert R_1(x) \vert &\lesssim \vert Q(0)-Q(x) \vert \,
		\vert D^2G_\beta^\delta(x)-D^2G_\beta(x)\vert \varphi(x) \\
		&\lesssim_\beta\vert x \vert^{2\alpha}\big(\delta+\vert x \vert^{\beta-4}{\bf1}_{\{\vert x \vert \leq \delta^{1/(4+\beta)}\}}\big)\varphi(x) \\
		&\lesssim \delta\varphi(x)+\vert x \vert^{2\alpha+\beta-4}{\bf1}_{\{\vert x \vert \leq \delta^{1/(4+\beta)}\}}\varphi(x).
	\end{align*}
	Notice that both of the Fourier transforms of $Q^\delta$ and $Q$  are even functions, so
	\begin{align*}
		Q^{\delta}(0)-Q^\delta(x)-(Q(0)-Q(x))
		&=\int \langle \xi\rangle^{-(2+2\alpha)}\Big(I_2-\dfrac{\xi\otimes \xi}{\lvert \xi\rvert^2 }\Big) (1-e^{2\pi ix\cdot \xi}) \big(\widehat{\rho^\delta}(\xi)^2-1 \big)\, \mathrm{d}\xi \\
		&=\int \langle \xi\rangle^{-(2+2\alpha)}\Big(I_2-\dfrac{\xi\otimes \xi}{\lvert \xi\rvert^2 }\Big)(1-\cos(2\pi x\cdot \xi)) \big(\widehat{\rho^\delta}(\xi)^2-1 \big)\,\mathrm{d}\xi.
	\end{align*}
	For every $a\in \mathbb{R}$ and $0<\epsilon<2\alpha$, it holds
	\begin{equation*}
		\vert 1-\cos(a)\vert \leq\frac{1}{2}a^2\wedge2\leq2a^{2\alpha-\epsilon},
	\end{equation*}
	hence we obtain
	\begin{align*}
		\big\vert (Q^{\delta}(0)-Q^\delta(x))-(Q(0)-Q(x)) \big\vert
		&\lesssim \vert x \vert^{2\alpha-\epsilon}
		\int\langle \xi\rangle^{-(2+2\alpha)}
		\vert \xi \vert^{2\alpha-\epsilon}{\bf1}_{\{\vert \xi \vert\geq1/\delta\}}\, \mathrm{d}\xi  \\
		&\lesssim \vert x \vert^{2\alpha-\epsilon}
		\int_{1/\delta}^{\infty}\rho^{-1-\epsilon} \, d\rho
		=\frac{1}{\epsilon}\vert x \vert^{2\alpha-\epsilon} \delta^{\epsilon}.
	\end{align*}
	With the uniform bound on $D^2G_\beta^\delta$ in Lemma \ref{errer-kernel}, we get
	\begin{equation*}
		\vert R_2(x) \vert \lesssim \delta^{\epsilon}
		\vert x \vert^{2\alpha+\beta-4-\epsilon}\varphi(x).
	\end{equation*}
	Hence the proof is complete.
\end{proof}

We give the bound of $R_3=\mathrm{tr}\big[(Q^{\delta}(0)-Q^\delta(x)) D^2G_\beta^\delta(x)\big](1-\varphi(x))$ in Fourier modes.
\begin{lem}\label{R3-bound}
	For $0<\alpha<1<\beta<2$, we have for all $\xi\in \mathbb{R}^2$,
	\begin{equation*}
		\big\vert \widehat{R_3}(\xi) \big\vert
		\lesssim
		\langle \xi \rangle^{-2-2\alpha}
		\lesssim_{\beta} \vert \xi \vert^{-\beta}.
	\end{equation*}
\end{lem}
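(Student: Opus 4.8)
The plan is to expand the trace, pass to the Fourier side so that the pointwise products become convolutions, and then play off the $\langle\cdot\rangle^{-(2+2\alpha)}$ decay of $\widehat{Q^\delta}$ against a \emph{rapidly decreasing} factor manufactured from $G_\beta^\delta$ and the far-field cut-off $1-\varphi$. Writing $\psi:=1-\varphi$ and, for $i,j\in\{1,2\}$, setting $\Phi_{ij}:=(\partial_{ij}G_\beta^\delta)\,\psi$, the symmetry of both matrices and $Q^\delta(0)=c_\delta I_2$ give
\[
	R_3=\sum_{i,j}\big(Q^\delta(0)-Q^\delta\big)_{ij}\,\Phi_{ij}
	= c_\delta\,(\Delta G_\beta^\delta)\,\psi-\sum_{i,j}Q^\delta_{ij}\,\Phi_{ij},
\]
so that, on the Fourier side, $\widehat{R_3}=c_\delta\sum_i\widehat{\Phi_{ii}}-\sum_{i,j}\widehat{Q^\delta_{ij}}\ast\widehat{\Phi_{ij}}$.

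The argument then rests on two uniform-in-$\delta$ ingredients. First, by \eqref{Q-delta} the entries of $\widehat{Q^\delta}$ obey $|\widehat{Q^\delta_{ij}}(\eta)|\leq\langle\eta\rangle^{-(2+2\alpha)}$, since the projection $I_2-\eta\otimes\eta/|\eta|^2$ has entries bounded by $1$ and $0\leq\widehat{\rho^\delta}\leq1$. Second, $\Phi_{ij}$ is smooth and supported in $\{|x|\geq1\}$, and by the kernel bound \eqref{0G} (with Leibniz terms that hit $\psi$ being bounded and supported in the annulus $1\leq|x|\leq2$) every derivative satisfies $|D^\gamma\Phi_{ij}(x)|\lesssim_\gamma|x|^{\beta-4-|\gamma|}$ on its support; since $\beta-4<-2$, each $D^\gamma\Phi_{ij}$ lies in $L^1(\R^2)$ with a $\delta$-independent bound. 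Hence $|\eta^\gamma\widehat{\Phi_{ij}}(\eta)|=|\widehat{D^\gamma\Phi_{ij}}(\eta)|\leq\|D^\gamma\Phi_{ij}\|_{L^1}\lesssim_\gamma1$, giving $|\widehat{\Phi_{ij}}(\eta)|\lesssim_N\langle\eta\rangle^{-N}$ for every $N$, uniformly in $\delta$. The constant-matrix contribution $c_\delta\sum_i\widehat{\Phi_{ii}}$ is then already rapidly decreasing (with $c_\delta$ bounded by \eqref{eq-c-delta}), far better than required.

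It remains to combine the two ingredients in the convolution. For $N$ large,
\[
	\big|\big(\widehat{Q^\delta_{ij}}\ast\widehat{\Phi_{ij}}\big)(\xi)\big|
	\lesssim_N\int_{\R^2}\langle\eta\rangle^{-(2+2\alpha)}\,\langle\xi-\eta\rangle^{-N}\,\mathrm{d}\eta
	\lesssim\langle\xi\rangle^{-(2+2\alpha)},
\]
the last step being the elementary Peetre-type estimate: split according to $|\eta|\geq|\xi|/2$ or $|\xi-\eta|\geq|\xi|/2$ and use that $\langle\cdot\rangle^{-(2+2\alpha)}$ is integrable because $2+2\alpha>2$, while $\langle\cdot\rangle^{-N}$ is integrable for $N>2$. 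Summing over $i,j$ and adding the rapidly decreasing constant-matrix term yields $|\widehat{R_3}(\xi)|\lesssim\langle\xi\rangle^{-(2+2\alpha)}$. The second inequality is then immediate: for $|\xi|\leq1$ one has $|\xi|^{-\beta}\geq1\geq\langle\xi\rangle^{-(2+2\alpha)}$, while for $|\xi|\geq1$ one has $\langle\xi\rangle^{-(2+2\alpha)}\lesssim|\xi|^{-(2+2\alpha)}\leq|\xi|^{-\beta}$ because $2+2\alpha>2>\beta$.

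The only genuinely delicate point is the uniformity in $\delta$: all constants above must be $\delta$-free, and this relies precisely on the $\delta$-independent kernel estimate \eqref{0G} (to bound every derivative of $\Phi_{ij}$) together with the pointwise domination $|\widehat{Q^\delta_{ij}}|\leq\langle\cdot\rangle^{-(2+2\alpha)}$ coming straight from \eqref{Q-delta}. Once these are secured, the convolution bound is routine, and the exponent $2+2\alpha$ is exactly the decay that the covariance factor $\widehat{Q^\delta}$ can supply.
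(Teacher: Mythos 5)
Your proof is correct and follows essentially the same route as the paper's: you split $R_3$ into the $Q^\delta(0)=c_\delta I_2$ part and the $Q^\delta$ part, use the $\delta$-uniform kernel bound \eqref{0G} (plus Leibniz on the cut-off) to show that $\mathcal{F}\big[D^2G_\beta^\delta\,(1-\varphi)\big]$ decays polynomially, and then estimate the convolution with $\widehat{Q^\delta}$ by the standard $|\eta|\lessgtr|\xi|/2$ splitting. The only cosmetic difference is that you establish rapid (order-$N$) decay of $\widehat{\Phi_{ij}}$, whereas the paper only extracts decay of order $4$ (via the moments $m=0$ and $m=4$), which already suffices.
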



\begin{proof}
	By the bound \eqref{0G}, for all nonnegative integer $m$, we have
	\begin{equation*}
		\begin{split}
			\big\Vert \vert\cdot\vert^m \mathcal{F} \big[D^2G_\beta^\delta\,(1-\varphi)\big] \big\Vert_{L^\infty}
			&= \big\Vert \mathcal{F} \big[D^{2+m}G_\beta^\delta\,(1-\varphi)\big] \big\Vert_{L^\infty} \\
			&\lesssim \big\Vert D^{2+m}G_\beta^\delta\,(1-\varphi) \big\Vert_{L^1}  \\
			&\lesssim \big\Vert\vert x \vert^{-(m+4-\beta)} {\bf1}_{\{\vert x \vert\geq1\}} \big\Vert_{L^1}
			\lesssim_\beta 1.
		\end{split}
	\end{equation*}
	Then letting $m=0$ and $m=4$, we get the bound
	\begin{equation}\label{Fourier-cutoff}
		\big\vert\mathcal{F} \big[D^2G_\beta^\delta\,(1-\varphi) \big](\xi)\big\vert
		\lesssim \langle \xi \rangle^{-4},\quad \text{for all } \xi\in \mathbb{R}^2.
	\end{equation}
	Now it remains to estimate the Fourier transform of the term $\mathrm{tr} \big[Q^\delta D^2G_\beta^\delta \big] (1-\varphi)$. By \eqref{Q-delta} and \eqref{Fourier-cutoff}, it holds that for all $\xi\in\mathbb{R}^2$,
	\begin{equation*}
		\begin{split}
			\big\vert \mathcal{F} \big(\mathrm{tr}\big[ Q^\delta D^2G_\beta^\delta \big](1-\varphi) \big)(\xi)\big\vert
			&=
			\big\vert \mathrm{tr} \big[\widehat{Q^{\delta}}\ast \mathcal{F} \big(D^2G_\beta^\delta(1-\varphi)\big) \big](\xi) \big\vert \\
			&\leq\int_{\mathbb{R}^2}
			\big\vert \widehat{Q^{\delta}}(\xi-\eta)\big\vert\,
			\big\vert \mathcal{F} \big(D^2G_\beta^\delta(1-\varphi)\big)(\eta) \big\vert \, \mathrm{d}\eta \\
			&\lesssim \int_{\mathbb{R}^2}
			\langle \xi-\eta\rangle^{-2-2\alpha}
			\langle \eta \rangle^{-4}\, \mathrm{d}\eta.
		\end{split}
	\end{equation*}
	For $\vert\eta\vert\leq \vert\xi\vert/2$, we use the bound $\vert \xi-\eta\vert\geq \vert\xi\vert-\vert\eta\vert\geq \vert \xi\vert/2$ and for $\vert\eta\vert\geq \vert\xi\vert/2$, we simply use the bound $\langle \eta \rangle^{-4}\leq \langle \xi \rangle^{-4}$, then we have
	\begin{equation}\label{key}
		\big\vert \mathcal{F} \big(\mathrm{tr} \big[Q^\delta D^2G_\beta^\delta \big](1-\varphi) \big)(\xi) \big\vert
		\lesssim
		\langle \xi \rangle^{-2-2\alpha} +\langle \xi \rangle^{-4}
		\lesssim
		\langle \xi \rangle^{-2-2\alpha}.
	\end{equation}
	Combining the estimates \eqref{Fourier-cutoff} and \eqref{key}, we arrive at
	\begin{equation*}
		\big\vert \widehat{R_3}(\xi) \big\vert
		\lesssim
		\langle \xi \rangle^{-2-2\alpha}
		\leq \vert\xi\vert^{-\beta},
	\end{equation*}
	which completes the proof.
\end{proof}

Now we put together the bound on the key term in Lemma \ref{A-bound} and the bounds on the remainder terms in Lemmas \ref{R1R2-bound} and \ref{R3-bound}.

\begin{lem}\label{extrareg}
	Let $\alpha\in (0,1)$ and $\beta\in(1,2)$ satisfy $2\alpha+\beta>2$. There exist constants $c,C>0$ such that $\P$-a.s., for every $t\in [0,T]$,
	\begin{align*}
		&\iint\limits_{\mathbb{R}^2\times \mathbb{R}^2}
		\mathrm{tr}\big[(Q^\delta(0)-Q^\delta(x-y))D^2G_\beta^\delta(x-y)\big]
		\theta_t^\delta(x)\theta_t^\delta(y)\, \mathrm{d}x\mathrm{d}y \\
		&\leq -c\Vert \theta_t^\delta \Vert_{H^{-\frac{\beta}{2}+1-\alpha}}^2
		+C\Vert \theta_t^\delta \Vert_{\dot{H}^{-\frac{\beta}{2}}}^2+o(1),
	\end{align*}
	where $o(1)$ tends to $0$ as $\delta\rightarrow0$ uniformly on $[0,T]\times \Omega$.
\end{lem}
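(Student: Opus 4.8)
The plan is to read off the bound from the pointwise splitting \eqref{keyterm} of the kernel $T:=\mathrm{tr}\big[(Q^\delta(0)-Q^\delta)D^2G_\beta^\delta\big]=A+R_1+R_2+R_3$, handling the coercive term $A$ and the remainder $R_3$ on the Fourier side and the remainders $R_1,R_2$ in physical space. Since $Q^\delta$ and $G_\beta^\delta$ are real-valued and even, each of $A,R_1,R_2,R_3$ is a real even kernel, and for any such kernel $\Phi$ Plancherel gives
\[
\iint_{\mathbb{R}^2\times\mathbb{R}^2}\Phi(x-y)\theta_t^\delta(x)\theta_t^\delta(y)\,\mathrm dx\,\mathrm dy=\int_{\mathbb{R}^2}\hat\Phi(\xi)\big|\widehat{\theta_t^\delta}(\xi)\big|^2\,\mathrm d\xi,
\]
all manipulations being justified by the smoothness and rapid decay of $\theta_t^\delta$.

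For the main term I would insert Lemma \ref{A-bound}. Because $-(2\alpha+\beta-2)=2\big(-\tfrac\beta2+1-\alpha\big)$, the negative part produces exactly
\[
-c\int_{\mathbb{R}^2}\langle\xi\rangle^{-(2\alpha+\beta-2)}\big|\widehat{\theta_t^\delta}(\xi)\big|^2\,\mathrm d\xi=-c\,\Vert\theta_t^\delta\Vert_{H^{-\frac\beta2+1-\alpha}}^2,
\]
the desired coercive term, while the positive part is at most $C\int\langle\xi\rangle^{-\beta}|\widehat{\theta_t^\delta}|^2\,\mathrm d\xi\le C\int|\xi|^{-\beta}|\widehat{\theta_t^\delta}|^2\,\mathrm d\xi=C\Vert\theta_t^\delta\Vert_{\dot H^{-\frac\beta2}}^2$. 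The term $R_3$ is treated identically: Lemma \ref{R3-bound} gives $|\hat R_3(\xi)|\lesssim|\xi|^{-\beta}$ with a constant independent of $\delta$, so its contribution is bounded in absolute value by $C\Vert\theta_t^\delta\Vert_{\dot H^{-\frac\beta2}}^2$ and can be absorbed into the constant $C$.

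It remains to show that the $R_1$ and $R_2$ contributions are $o(1)$ uniformly in $(t,\omega)$. Bounding the bilinear form by the positive kernel and using Young's inequality together with Cauchy--Schwarz, $\big|\iint R_i(x-y)\theta_t^\delta(x)\theta_t^\delta(y)\,\mathrm dx\,\mathrm dy\big|\le\Vert R_i\Vert_{L^1}\Vert\theta_t^\delta\Vert_{L^2}^2$. Using Lemma \ref{R1R2-bound} and the hypothesis $2\alpha+\beta>2$ (so that $\int_{|x|\le r}|x|^{2\alpha+\beta-4}\,\mathrm dx\asymp r^{2\alpha+\beta-2}$ is finite), I would get $\Vert R_1\Vert_{L^1}\lesssim\delta^{\frac{2\alpha+\beta-2}{\beta+4}}$ and, after fixing $\epsilon\in\big(2\bar\epsilon,\,2\alpha+\beta-2\big)$ (a nonempty window since $2\bar\epsilon<\frac{2\alpha+\beta-2}{\beta+4}<2\alpha+\beta-2<2\alpha$, the last inequality using $\beta<2$), $\Vert R_2\Vert_{L^1}\lesssim_\epsilon\delta^{\epsilon}$. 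Finally Lemma \ref{reg-mSQG-sol} gives $\Vert\theta_t^\delta\Vert_{L^2}^2\le\Vert\theta_0^\delta\Vert_{L^2}^2\le\Vert\theta_0^\delta\Vert_{L^1}\Vert\theta_0^\delta\Vert_{L^\infty}$, which by the second relation in \eqref{condition3} is $o(\delta^{-2\bar\epsilon})$; since $\frac{2\alpha+\beta-2}{\beta+4}>2\bar\epsilon$ and $\epsilon>2\bar\epsilon$, both products $\Vert R_i\Vert_{L^1}\Vert\theta_t^\delta\Vert_{L^2}^2$ vanish as $\delta\to0$, and being deterministic they are uniform on $[0,T]\times\Omega$.

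I expect this last balancing to be the only delicate point: the $L^1$-norms of the remainders carry small positive powers of $\delta$, whereas $\Vert\theta_0^\delta\Vert_{L^2}^2$ may blow up like $\delta^{-2\bar\epsilon}$, so the argument rests entirely on the admissible range $2\bar\epsilon<\frac{2\alpha+\beta-2}{\beta+4}$ fixed in \eqref{condition3} and on selecting $\epsilon$ inside the correct window for $R_2$.
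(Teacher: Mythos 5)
Your proposal is correct and follows essentially the same route as the paper's proof: the same decomposition \eqref{keyterm} into $A+R_1+R_2+R_3$, the same Fourier-side treatment of $A$ and $R_3$ via Lemmas \ref{A-bound} and \ref{R3-bound}, and the same physical-space $L^1$-estimates of $R_1,R_2$ from Lemma \ref{R1R2-bound} combined with Lemma \ref{reg-mSQG-sol} and condition \eqref{condition3}. The only (immaterial) differences are that you bound the remainder bilinear forms by $\Vert R_i\Vert_{L^1}\Vert\theta_t^\delta\Vert_{L^2}^2$ and then interpolate $\Vert\theta_0^\delta\Vert_{L^2}^2\leq\Vert\theta_0^\delta\Vert_{L^1}\Vert\theta_0^\delta\Vert_{L^\infty}$, where the paper passes directly to $\Vert R_i\Vert_{L^1}\Vert\theta_0^\delta\Vert_{L^1}\Vert\theta_0^\delta\Vert_{L^\infty}$, and that your explicit choice $\epsilon\in(2\bar\epsilon,\,2\alpha+\beta-2)$ makes the final $o(1)$ bookkeeping against \eqref{condition3} slightly more careful than the paper's write-up.
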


\begin{proof}
	By the property of Fourier transform, we have
	\begin{align*}
		&\iint\limits_{\mathbb{R}^2\times \mathbb{R}^2}
		\mathrm{tr}\big[(Q^\delta(0)-Q^\delta(x-y))D^2G_\beta^\delta(x-y)\big]
		\theta_t^\delta(x)\theta_t^\delta(y)\, \mathrm{d}x\mathrm{d}y  \\
		&\leq \int_{\mathbb{R}^2} \big(\hat{A}(\xi)+\widehat{R_3}(\xi)\big) \big\vert \widehat{\theta_t^\delta}(\xi)  \big\vert^2\, \mathrm{d}\xi +\iint\limits_{\mathbb{R}^2\times \mathbb{R}^2}
		\big(\vert R_1(x-y) \vert+\vert R_2(x-y) \vert\big) \vert \theta_t^\delta(x)\vert \, \vert \theta_t^\delta(y) \vert \, \mathrm{d}x\mathrm{d}y.
	\end{align*}
	By Lemmas \ref{A-bound} and \ref{R3-bound}, we obtain
	\begin{align*}
		\int_{\mathbb{R}^2}\big(\hat{A}(\xi)+\widehat{R_3}(\xi)\big) \big\vert \widehat{\theta_t^\delta}(\xi) \big\vert^2\, \mathrm{d}\xi
		&\leq -c\int \langle \xi \rangle^{-(2\alpha+\beta-2)}
		\big\vert \widehat{\theta_t^\delta}(\xi) \big\vert^2\, \mathrm{d}\xi
		+C\int \vert \xi \vert^{-\beta} \big\vert \widehat{\theta_t^\delta}(\xi) \big\vert^2\, \mathrm{d}\xi  \\
		&=-c\Vert \theta_t^\delta \Vert_{H^{-\frac{\beta}{2}+1-\alpha}}^2
		+C\Vert \theta_t^\delta \Vert_{\dot{H}^{-\frac{\beta}{2}}}^2.
	\end{align*}
	Take some  $0<2\bar{\epsilon}<\min\big\{ \frac{2\alpha+\beta-2}{\beta+4}, \frac{\beta-1}{\beta+3} \big\}$, by Lemma \ref{R1R2-bound},
	\begin{align*}
		&\iint\limits_{\mathbb{R}^2\times \mathbb{R}^2}
		\big(\vert R_1(x-y) \vert+\vert R_2(x-y) \vert\big) \vert \theta_t^\delta(x)\vert \, \vert \theta_t^\delta(y) \vert\, \mathrm{d}x\mathrm{d}y \\
		&\leq \Vert \theta_0^\delta \Vert_{L^1}\Vert \theta_0^\delta \Vert_{L^\infty}\big(\Vert R_1 \Vert_{L^1}+\Vert R_2 \Vert_{L^1}\big)  \\
		&\lesssim \Vert \theta_0^\delta \Vert_{L^1}\Vert \theta_0^\delta \Vert_{L^\infty}\,\int \Big(
		\delta\varphi(x)+\vert x \vert^{2\alpha+\beta-4}{\bf1}_{\{\vert x \vert \leq \delta^{1/(4+\beta)}\}}+
		\delta^{\bar{\epsilon}}\vert x \vert^{2\alpha+\beta-4-\bar{\epsilon}}\varphi(x)
		\Big)\, \mathrm{d}x
		\\
		&\lesssim \delta^{\bar{\epsilon}} 	\Vert \theta_0^\delta \Vert_{L^1}\Vert \theta_0^\delta \Vert_{L^\infty}
		\lesssim \delta^{\bar{\epsilon}}  \big(	\Vert \theta_0^\delta \Vert_{L^1}+\Vert \theta_0^\delta \Vert_{L^\infty} \big)^2 = o(1),
	\end{align*}
	where the last step is due to \eqref{condition3}. Combining the above two bounds, we get the desired estimate.
\end{proof}

\begin{prop}\label{energy-bound}
	Let $\alpha\in (0,1)$ and $\beta\in(1,2)$ satisfy $2\alpha+\beta>2$, then there exist constants $c,\, C>0$ such that, for every $0<\delta<1$,
	\begin{equation*}
		\sup\limits_{t\in [0,T]} \mathbb{E}\Big[\Vert \theta_t^\delta \Vert_{\dot{H}^{-\frac{\beta}{2}}}^2\Big]
		+c\int_{0}^{T}\mathbb{E}\Big[\Vert \theta_t^\delta \Vert_{H^{-\frac{\beta}{2}+1-\alpha}}^2\Big]\,\mathrm{d}t\leq C\Vert \theta_0 \Vert_{\dot{H}^{-\frac{\beta}{2}}}^2+o(1),
	\end{equation*}
	where $o(1)$ tend to $0$ as $\delta \rightarrow 0$.
\end{prop}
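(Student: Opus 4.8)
The plan is to combine the exact energy identity furnished by Corollary \ref{ItoCor} with the one-sided bound on the trace term proved in Lemma \ref{extrareg}, and then to close the estimate by Gronwall's inequality. Since the analytic heart of the matter—the dissipative effect coming from the structure of the Kraichnan covariance, encoded in the negative $H^{-\beta/2+1-\alpha}$ contribution—has already been extracted in Lemma \ref{extrareg} under the hypothesis $2\alpha+\beta>2$, the present proposition is essentially a matter of assembling the available pieces and running a standard bootstrap.

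First I would recall the identity from Corollary \ref{ItoCor}, which for every $t\in[0,T]$ reads
\begin{equation*}
    \mathbb{E}\big[\Vert \theta_t^\delta \Vert_{\dot{H}^{-\frac\beta2}}^2\big]
    = \Vert \theta_0 \Vert_{\dot{H}^{-\frac\beta2}}^2 + o(1)
    + (2\pi)^\beta \int_0^t \mathbb{E}\iint\limits_{\mathbb{R}^2\times\mathbb{R}^2}
    \mathrm{tr}\big[(Q^\delta(0)-Q^\delta)D^2G_\beta^\delta\big](x-y)\,\theta_r^\delta(x)\theta_r^\delta(y)\,\mathrm{d}x\mathrm{d}y\mathrm{d}r.
\end{equation*}
Next I would invoke Lemma \ref{extrareg}, which gives, $\P$-a.s.\ and for each fixed $r$, a pointwise-in-time bound on the inner double integral by $-c\Vert\theta_r^\delta\Vert_{H^{-\beta/2+1-\alpha}}^2 + C\Vert\theta_r^\delta\Vert_{\dot{H}^{-\beta/2}}^2 + o(1)$, the $o(1)$ being uniform on $[0,T]\times\Omega$. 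Taking expectation of this bound (note $(2\pi)^\beta>0$ preserves the inequality) and integrating against $\mathrm{d}r$ over the finite interval $[0,t]$—so that the uniform $o(1)$ survives as $o(1)$—and substituting into the identity, I would arrive, after absorbing the harmless constant $(2\pi)^\beta$ into $c$ and $C$, at
\begin{equation*}
    \mathbb{E}\big[\Vert \theta_t^\delta \Vert_{\dot{H}^{-\frac\beta2}}^2\big]
    + c\int_0^t \mathbb{E}\big[\Vert \theta_r^\delta \Vert_{H^{-\frac\beta2+1-\alpha}}^2\big]\,\mathrm{d}r
    \le \Vert \theta_0 \Vert_{\dot{H}^{-\frac\beta2}}^2 + C\int_0^t \mathbb{E}\big[\Vert \theta_r^\delta \Vert_{\dot{H}^{-\frac\beta2}}^2\big]\,\mathrm{d}r + o(1).
\end{equation*}

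From here the argument closes in two steps. Dropping the nonnegative dissipation term on the left and setting $f(t):=\mathbb{E}[\Vert\theta_t^\delta\Vert_{\dot{H}^{-\beta/2}}^2]$, the inequality becomes $f(t)\le \Vert\theta_0\Vert_{\dot{H}^{-\beta/2}}^2+o(1)+C\int_0^t f(r)\,\mathrm{d}r$, whence Gronwall's inequality yields $\sup_{t\in[0,T]} f(t)\le (\Vert\theta_0\Vert_{\dot{H}^{-\beta/2}}^2+o(1))e^{CT}$, which is the first asserted bound. Feeding this uniform control back into the displayed inequality at $t=T$ while now retaining the dissipation term on the left bounds $c\int_0^T\mathbb{E}[\Vert\theta_r^\delta\Vert_{H^{-\beta/2+1-\alpha}}^2]\,\mathrm{d}r$ by $\Vert\theta_0\Vert_{\dot{H}^{-\beta/2}}^2+CT\sup_{t} f(t)+o(1)$, which delivers the second term and completes the estimate. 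The only points demanding care are the bookkeeping of the $o(1)$ quantities—it is essential that the error in Lemma \ref{extrareg} be uniform over $[0,T]\times\Omega$ so that it passes unscathed through both the expectation and the time integration—so I do not expect any genuine analytic obstacle at this stage.
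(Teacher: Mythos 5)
Your proposal is correct and follows essentially the same route as the paper: the paper's proof likewise combines Corollary \ref{ItoCor} with Lemma \ref{extrareg} to obtain the integral inequality and then concludes by Gr\"onwall's inequality. Your additional bookkeeping (dropping the dissipation term before Gr\"onwall and then feeding the sup bound back to recover the $H^{-\frac{\beta}{2}+1-\alpha}$ term, and tracking the uniformity of the $o(1)$) simply makes explicit the steps the paper leaves to the reader.
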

\begin{proof}
	By Corollary \ref{ItoCor} and Lemma \ref{extrareg}, we get, for every $t\in [0,T]$,
	\begin{equation*}
		\mathbb{E}\Big[\Vert \theta_t^\delta \Vert_{\dot{H}^{-\frac{\beta}{2}}}^2\Big]
		+c\int_{0}^{t}\mathbb{E}\Big[\Vert \theta_r^\delta \Vert_{H^{-\frac{\beta}{2}+1-\alpha}}^2\Big]\,\mathrm{d}r
		\leq
		\Vert \theta_0 \Vert_{\dot{H}^{-\frac{\beta}{2}}}^2+ C\int_{0}^{t}
		\mathbb{E}\Big[\Vert \theta_r^\delta \Vert_{\dot{H}^{-\frac{\beta}{2}}}^2\Big]\, \mathrm{d}r +o(1).
	\end{equation*}
	Then the conclusion is clear by Gr\"onwall inequality.
\end{proof}

\subsection{Bounds on time continuity}\label{time}

In this subsection, we establish the a priori bound on time continuity for a solution to \eqref{reg-mSQG}, namely we bound the expected value of $\Vert \theta^\delta \Vert_{C_t^\gamma(\tilde{H}^{-4})}^2$, which is needed in the compactness method to prove the weak existence.

Following the idea of \cite{CogMau}, we introduce a mixed homogeneous-inhomogeneous $\tilde{H}^{-4}$ norm of a scalar tempered distribution $f$, that is, the $H^{-5+\beta}$ norm of the associated velocity field $K_\beta*f$:

\begin{equation*}
	\Vert f \Vert_{\tilde{H}^{-4}}^2
	:=\int_{\mathbb{R}^2}\langle \xi \rangle^{2\beta-10}\vert \xi \vert^{2-2\beta}
	\vert \hat{f}(\xi) \vert^2\, \mathrm{d}\xi =(2\pi)^{2\beta-2} \Vert K_\beta*f \Vert_{H^{-5+\beta}}^2.
\end{equation*}
The space $\tilde{H}^{-4}$ can be identified with the space of divergence-free $H^{-5+\beta}$ vector fields, hence it is a separable Hilbert space.

\begin{lem}\label{time-continuity}
	Let $0<\alpha<1<\beta<2\text{ and }\frac{4}{\beta+1}\leq p\leq 2$. Suppose that
	the initial data $\theta_0\in L^p\cap \dot{H}^{-\frac{\beta}{2}}$ and $(\theta^\delta_0)_\delta$ satisfies the approximation conditions
	\eqref{condition1}--\eqref{condition4}, then for every $0<\gamma<\frac{1}{2}, \lambda\geq 2$, there exists a constant $C=C_{\gamma,\lambda,\theta_0}>0$ such that, for every $0<\delta<1$,
	\begin{equation*}
		\mathbb{E}\Big[ \Vert \theta^\delta \Vert_{C_t^\gamma(\tilde{H}^{-4})}^\lambda \Big] \leq C.
	\end{equation*}
\end{lem}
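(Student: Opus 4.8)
The plan is to prove the moment bound via the Kolmogorov--Chentsov continuity criterion, so the core of the argument is a quantitative estimate on the increments $\theta^\delta_t - \theta^\delta_s$ in $\tilde{H}^{-4}$, uniform in $\delta$. Using the weak formulation from Lemma \ref{reg-mSQG-sol}, I would write, for $0 \le s \le t \le T$,
\begin{equation*}
	\theta^\delta_t - \theta^\delta_s = -\int_s^t \mathrm{div}\big((K^\delta_\beta \ast \theta^\delta_r)\theta^\delta_r\big)\,\mathrm{d}r + \frac{c_\delta}{2}\int_s^t \Delta\theta^\delta_r\,\mathrm{d}r - \sum_k \int_s^t \mathrm{div}(\sigma^\delta_k\theta^\delta_r)\,\mathrm{d}B^k_r,
\end{equation*}
and estimate the two drift (Bochner) integrals and the stochastic integral separately, aiming at a bound of the form $\mathbb{E}\big[\|\theta^\delta_t-\theta^\delta_s\|^{\lambda'}_{\tilde{H}^{-4}}\big] \lesssim |t-s|^{\lambda'/2}$ for every $\lambda'\ge 2$, with constant depending only on $\|\theta_0\|_{L^p}$.

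The technical heart is a single spatial estimate. Because the $\tilde{H}^{-4}$ norm carries the Fourier weight $\langle\xi\rangle^{2\beta-10}|\xi|^{2-2\beta}$, the weight attached to $\|\mathrm{div}\,F\|_{\tilde{H}^{-4}}^2$ is $w(\xi)=\langle\xi\rangle^{2\beta-10}|\xi|^{4-2\beta}$, which for $\beta\in(1,2)$ has no singularity at the origin and decays like $|\xi|^{-6}$, hence lies in $L^1\cap L^\infty$. By Hausdorff--Young followed by H\"older in frequency I would deduce $\|\mathrm{div}\,F\|_{\tilde{H}^{-4}}\lesssim \|F\|_{L^r}$ for every $r\in[1,2]$, and likewise $\|\Delta g\|_{\tilde{H}^{-4}}\lesssim\|g\|_{L^r}$. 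Applied to the nonlinearity with $F=(K^\delta_\beta\ast\theta^\delta)\theta^\delta$, I would use the pointwise bound $|K^\delta_\beta|\lesssim|x|^{\beta-3}$ from \eqref{0G} together with the Hardy--Littlewood--Sobolev inequality (the Riesz potential of order $\beta-1$) to get $\|K^\delta_\beta\ast\theta^\delta\|_{L^{p^\ast}}\lesssim\|\theta^\delta\|_{L^p}$ with $1/p^\ast=1/p-(\beta-1)/2$; H\"older then places the product in $L^r$ with $1/r=2/p-(\beta-1)/2$, and the constraint $r\ge 1$ is \emph{exactly} $p\ge \frac{4}{\beta+1}$, which explains the hypothesis. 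Since $\|\theta^\delta_r\|_{L^p}\le\|\theta^\delta_0\|_{L^p}$ by Lemma \ref{reg-mSQG-sol}, this yields $\|\mathrm{div}((K^\delta_\beta\ast\theta^\delta_r)\theta^\delta_r)\|_{\tilde{H}^{-4}}\lesssim\|\theta^\delta_0\|_{L^p}^2$ uniformly in $r$ and $\delta$, and analogously $\|c_\delta\Delta\theta^\delta_r\|_{\tilde{H}^{-4}}\lesssim\|\theta^\delta_0\|_{L^p}$ (recall $c_\delta$ is bounded). For the noise coefficients I would sum over $k$ using the identity $\sum_k\widehat{\sigma^\delta_k\theta}(\xi)\otimes\overline{\widehat{\sigma^\delta_k\theta}(\xi)}$ expressed through the covariance $\widehat{Q^\delta}$, reducing $\sum_k\|\mathrm{div}(\sigma^\delta_k\theta^\delta_r)\|_{\tilde{H}^{-4}}^2$ to $\int (w\ast \mathrm{tr}\,\widehat{Q^\delta})(\zeta)|\widehat{\theta^\delta_r}(\zeta)|^2\,\mathrm{d}\zeta$; since $\mathrm{tr}\,\widehat{Q^\delta}\le\langle\cdot\rangle^{-(2+2\alpha)}\in L^1\cap L^\infty$ by \eqref{Q-delta}, the effective weight is again in $L^1\cap L^\infty$ and the same Hausdorff--Young argument gives $\sum_k\|\mathrm{div}(\sigma^\delta_k\theta^\delta_r)\|_{\tilde{H}^{-4}}^2\lesssim\|\theta^\delta_0\|_{L^p}^2$.

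With these bounds in hand the assembly is routine: the two drift integrals are controlled by $\int_s^t(\cdots)\,\mathrm{d}r$, giving $\|\mathrm{drift}\|_{\tilde{H}^{-4}}\lesssim|t-s|\,\|\theta^\delta_0\|_{L^p}^2$ and hence an $|t-s|^{\lambda'}$ contribution, while the Burkholder--Davis--Gundy inequality for $\tilde{H}^{-4}$-valued martingales gives $\mathbb{E}\|\mathrm{mart}\|_{\tilde{H}^{-4}}^{\lambda'}\lesssim\mathbb{E}\big[(\int_s^t\sum_k\|\mathrm{div}(\sigma^\delta_k\theta^\delta_r)\|^2_{\tilde{H}^{-4}}\,\mathrm{d}r)^{\lambda'/2}\big]\lesssim|t-s|^{\lambda'/2}\|\theta^\delta_0\|_{L^p}^{\lambda'}$, which is the dominant term and fixes the $1/2$ threshold. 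Combining and recalling $\|\theta^\delta_0\|_{L^p}\to\|\theta_0\|_{L^p}$ by \eqref{condition4} gives the increment estimate uniformly in $\delta$; the Kolmogorov--Chentsov theorem then produces, for $\lambda'$ large enough that $\gamma<\frac12-\frac1{\lambda'}$, a $\gamma$-H\"older modification with $\mathbb{E}\|\theta^\delta\|_{C^\gamma_t(\tilde{H}^{-4})}^{\lambda'}\le C$, and H\"older's inequality lowers the exponent from $\lambda'$ to any prescribed $\lambda\ge 2$. I expect the main obstacle to be the spatial estimate of the second paragraph: verifying that $\tilde{H}^{-4}$ is weak enough that the nonlinearity is controlled by the \emph{conserved} $L^p$ norm alone (we have no uniform $L^2$ bound as $\delta\to0$), which forces the sharp interplay between $\beta$ and $p$ through $p\ge\frac4{\beta+1}$, together with the careful summation over $k$ in the noise term via the covariance structure.
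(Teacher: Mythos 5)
Your proposal is correct and follows the same skeleton as the paper's proof: estimate the increments $\theta^\delta_t-\theta^\delta_s$ from the weak formulation, treat the nonlinear drift, the It\^o integral (via Burkholder--Davis--Gundy and the covariance summation $\sum_k\sigma^\delta_k(x)\otimes\sigma^\delta_k(y)=Q^\delta(x-y)$), and the Laplacian term separately, and conclude by the Kolmogorov criterion; in both arguments the hypothesis $p\ge\frac{4}{\beta+1}$ enters at exactly the same place, namely as the integrability threshold for the product $(K^\delta_\beta\ast\theta^\delta)\theta^\delta$ after Hardy--Littlewood--Sobolev. The differences are tactical but worth recording. First, for the nonlinearity the paper splits $K^\delta_\beta\ast\theta^\delta$ into $u^\delta=K_\beta\ast\theta^\delta$ plus the error $(K^\delta_\beta-K_\beta)\ast\theta^\delta$, handles $u^\delta$ by the Riesz-transform/HLS argument, and controls the error through Lemma \ref{kernel-term-err}, which invokes the approximation conditions \eqref{condition3} and produces $o(1)$ terms; you instead apply HLS directly to the mollified kernel using the $\delta$-uniform pointwise bound $|K^\delta_\beta(x)|\lesssim|x|^{\beta-3}$ from \eqref{0G}, which removes the splitting, the $o(1)$ bookkeeping, and any use of \eqref{condition3} in this lemma --- a genuine, if modest, simplification. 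Second, where the paper uses Sobolev embeddings ($L^q\hookrightarrow H^{-2}$ for the drift, and $L^p\hookrightarrow H^{1-2/p}\hookrightarrow H^{-1-\alpha}$ for the noise after proving $\hat\psi(\xi)\lesssim\langle\xi\rangle^{-2-2\alpha}$), you run the equivalent Hausdorff--Young plus H\"older computation in frequency, observing that the relevant $\tilde H^{-4}$ weights and $w\ast\mathrm{tr}\,\widehat{Q^\delta}$ lie in $L^1\cap L^\infty$; this is the same mechanism in different packaging. Third, you are more careful than the paper at the Kolmogorov step: the criterion with moment exponent $\lambda'$ only yields H\"older continuity for $\gamma<\frac12-\frac1{\lambda'}$, so covering every $\gamma<\frac12$ requires taking $\lambda'$ large and then lowering the moment by Jensen's inequality, a point the paper glosses over and your argument supplies explicitly.
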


\begin{proof}
	We estimate the $\tilde{H}^{-4}$ norm of $\theta_t^\delta-\theta_s^\delta$ by using \eqref{reg-mSQG}:
	\begin{align*}
		\mathbb{E}\Big[ \big\Vert \theta_t^\delta-\theta_s^\delta \big\Vert_{\tilde{H}^{-4}}^\lambda \Big]
		&\lesssim \mathbb{E}\bigg[ \Big\Vert \int_{s}^{t}
		(K_\beta^\delta*\theta_r^\delta)\cdot \nabla \theta_r^\delta\, \mathrm{d}r \Big\Vert_{\tilde{H}^{-4}}^\lambda \bigg]
		+\mathbb{E}\bigg[ \Big\Vert \sum_{k}\int_{s}^{t}
		\sigma_k^\delta\cdot \nabla \theta_r^\delta\, \mathrm{d}B_r^k \Big\Vert_{\tilde{H}^{-4}}^\lambda \bigg]\\
		&\quad+\mathbb{E}\bigg[ \Big\Vert \int_{s}^{t}
		c_\delta\Delta \theta_r^\delta\, \mathrm{d}r \Big\Vert_{\tilde{H}^{-4}}^\lambda \bigg] \\
		&:=S_1+ S_2 +S_3.
	\end{align*}
	Calling $u^\delta=K_\beta*\theta^\delta$ and $\tilde{u}^\delta=K_\beta^\delta*\theta^\delta$, we divide the term $S_1$ as
	\begin{align*}
		S_1
		&\lesssim \mathbb{E}\bigg[ \Big\Vert \int_{s}^{t}
		u_r^\delta\cdot \nabla \theta_r^\delta\, \mathrm{d}r \Big\Vert_{\tilde{H}^{-4}}^\lambda \bigg]
		+ \mathbb{E}\bigg[ \Big\Vert \int_{s}^{t}
		(u_r^\delta-\tilde{u}_r^\delta)\cdot \nabla \theta_r^\delta\, \mathrm{d}r \Big\Vert_{\tilde{H}^{-4}}^\lambda \bigg]\\
		&\lesssim (t-s)^{\lambda}\mathbb{E}\bigg[\sup\limits_{r\in [0,1]}\Vert u_r^\delta\theta_r^\delta \Vert_{H^{-2}}^\lambda \bigg]
		+(t-s)^{\lambda}\mathbb{E}\bigg[\sup\limits_{r\in [0,1]}\Vert (u_r^\delta-\tilde{u}_r^\delta)\theta_r^\delta \Vert_{L^2}^\lambda \bigg].
	\end{align*}
	Note that $\theta^{\delta}\in L^p \text{ and }$
	\begin{equation*}
		u^\delta=\nabla^\perp(-\Delta)^{-\frac{\beta}{2}}\theta^\delta=(-\Delta)^{-\frac{\beta-1}{2}}(\nabla^\perp(-\Delta)^{-\frac{1}{2}})
		\,\theta^\delta,
	\end{equation*}
	then by the property of Riesz transform and Hardy-Littlewood-Sobolev inequality (see e.g. \cite[Chapter III, Section 1]{Stein1971} and \cite[Theorem 1.8]{Bahouri2011FourierAA}), we have $u^{\delta}$ is in $L^{p_\ast}$ and $\Vert u^{\delta} \Vert_{L^{p_\ast}}\lesssim \Vert \theta^{\delta} \Vert_{L^p}$, where $p_\ast^{-1}=p^{-1}-\frac{\beta-1}{2}$. Now let $q^{-1}=p^{-1}+p_\ast^{-1}$, by the condition $\frac{4}{\beta+1}\leq p\leq 2$, we know $1\leq q<2$.
	Sobolev embedding implies that $L^{q}\hookrightarrow H^{-2}$, hence together with H\"older inequality, we obtain
	\begin{align*}
		\mathbb{E}\bigg[\sup\limits_{r\in [0,T]}\Vert u_r^\delta\theta_r^\delta \Vert_{H^{-2}}^\lambda \bigg]
		&\lesssim
		\mathbb{E}\bigg[\sup\limits_{r\in [0,T]}\Vert u_r^\delta\theta_r^\delta \Vert_{L^{q}}^\lambda \bigg]
		\lesssim
		\mathbb{E}\bigg[\sup\limits_{r\in [0,T]}\Vert u_r^\delta\Vert_{L^{p_\ast}}^\lambda \Vert\theta_r^\delta \Vert_{L^p}^\lambda \bigg]   \\
		&\lesssim
		\mathbb{E}\bigg[\sup\limits_{r\in [0,T]} \Vert\theta_r^\delta \Vert_{L^p}^{2\lambda} \bigg] \lesssim \Vert\theta_0^\delta \Vert_{L^p}^{2\lambda}
		\lesssim (\Vert\theta_0 \Vert_{L^p}+o(1))^{2\lambda}.
	\end{align*}
	Recall Lemmas \ref{reg-mSQG-sol}, \ref{kernel-term-err} and \eqref{condition3}, we also have
	\begin{equation}\label{A21}
		\begin{split}
			\mathbb{E}\bigg[\sup\limits_{r\in [0,T]}\Vert (u_r^\delta-\tilde{u}_r^\delta)\theta_r^\delta \Vert_{L^2}^\lambda \bigg]
			&\leq
			\mathbb{E}\bigg[\sup\limits_{r\in [0,T]}\Vert u_r^\delta-\tilde{u}_r^\delta\Vert_{L^{\infty}}^\lambda \Vert\theta_r^\delta \Vert_{L^2}^\lambda \bigg]  \\
			&\leq
			\Vert\theta_0^\delta \Vert_{L^2}^\lambda
			\mathbb{E}\bigg[\sup\limits_{r\in [0,T]}\Vert (\nabla G_\beta^\delta-\nabla G_\beta) \ast \theta_r^\delta\Vert_{L^{\infty}}^\lambda \bigg]    \\
			&\lesssim   \Big[\delta^{\frac{\beta-1}{\beta+3}}\Vert \theta_0^\delta \Vert_{L^1}^{1/2}\Vert \theta_0^\delta \Vert_{L^\infty}^{1/2}(\Vert \theta_0^\delta \Vert_{L^\infty}
			+\Vert \theta_0^\delta \Vert_{L^1})\Big]^\lambda \\
			&\lesssim
			\Big[\delta^{\frac{\beta-1}{\beta+3}}(\Vert \theta_0^\delta \Vert_{L^\infty}
			+\Vert \theta_0^\delta \Vert_{L^1})^2\Big]^\lambda
			=o(1).
		\end{split}
	\end{equation}
	Putting all above together, we have the estimate of $S_1$:
	\begin{equation*}
		S_1\lesssim (t-s)^\lambda (\Vert \theta_0 \Vert_{L^p}^{2\lambda}+o(1)).
	\end{equation*}
	
	Applying the Burkholder-Davis-Gundy inequality to the stochastic integral term $S_2$, we have
	\begin{align*}
		S_2&\lesssim\mathbb{E}\bigg[ \bigg(  \int_{s}^{t}\sum_{k} \Vert \mathrm{div} (\sigma_k^\delta\theta_r^\delta) \Vert_{\tilde{H}^{-4}}^2\, \mathrm{d}r\bigg)^{\lambda/2} \bigg]
		\leq \mathbb{E}\bigg[ \bigg(  \int_{s}^{t}\sum_{k} \Vert \sigma_k^\delta\theta_r^\delta \Vert_{H^{-3}}^2 \mathrm{d}r \bigg)^{\lambda/2} \bigg] \\
		&\leq (t-s)^{\lambda/2}\mathbb{E}\bigg[ \sup\limits_{r\in [0,T]}\bigg(\sum_{k}\Vert\sigma_k^\delta\theta_r^\delta \Vert_{H^{-3}}^2\bigg)^{\lambda/2} \bigg].
	\end{align*}
	Exploiting the Fourier transform, the integrand reads as
	\begin{equation}\label{med1}
		\begin{split}
			\sum_{k}\Vert\sigma_k^\delta\theta^\delta \Vert_{H^{-3}}^2
			&=\sum_{k}\int \langle \xi \rangle^{-6}
			\big\vert \widehat{\sigma_k^\delta\theta^\delta}(\xi) \big\vert^2\,\mathrm{d}\xi  \\
			&=\sum_{k} \int \langle \xi \rangle^{-6}
			\iint\sigma_k^\delta(x)\theta^\delta(x)
			e^{-2\pi ix\cdot \xi}\cdot
			\sigma_k^\delta(y)\theta^\delta(y)
			e^{2\pi iy\cdot \xi}\, \mathrm{d}x\mathrm{d}y\mathrm{d}\xi \\
			&=\iint \theta^\delta(x) \theta^\delta(y)\,
			\mathrm{tr}Q^\delta(x-y)\int  \langle \xi \rangle^{-6}
			e^{-2\pi i(x-y)\cdot \xi}\, \mathrm{d}\xi\mathrm{d}x\mathrm{d}y.
		\end{split}
	\end{equation}
	Let  $\psi(x)=\mathrm{tr}\big[Q^\delta(x)\big]\int \langle \xi\rangle^{-6}e^{-2\pi ix\cdot \xi}\, \mathrm{d}\xi$, then
	\begin{equation}\label{med1'}
		\sum_{k}\Vert\sigma_k^\delta\theta^\delta \Vert_{H^{-3}}^2
		=\int \theta^\delta(x)(\psi\ast \theta^\delta)(x)\, \mathrm{d}x\\
		=\int \big\vert \widehat{\theta^\delta}(\xi) \big\vert^2\hat{\psi}(\xi)\, \mathrm{d}\xi.
	\end{equation}
	We directly calculate the Fourier transform of $\psi$: for all $\xi\in \mathbb{R}^2$,
	\begin{equation*}
		\begin{split}
			\hat{\psi}(\xi)
			=\int \mathrm{tr}\widehat{Q^\delta}(\xi-\eta)\langle \eta\rangle^{-6} \,\mathrm{d}\eta
			\leq \int \langle \xi-\eta \rangle^{-2-2\alpha} \langle \eta\rangle^{-6} \,\mathrm{d}\eta.
		\end{split}
	\end{equation*}
	For $\vert \eta\vert\leq \vert \xi \vert/2$, we know $\langle \xi-\eta \rangle^{-2-2\alpha}\leq \langle \xi/2 \rangle^{-2-2\alpha}$ by triangle inequality; for $\vert \eta\vert\geq \vert \xi \vert/2$, we just use the bound $\Vert \langle \cdot \rangle^{-2-2\alpha} \Vert_{L^\infty}\leq1$. Hence we have
	\begin{equation}\label{med3}
		\begin{split}
			\hat{\psi}(\xi)
			&\lesssim
			\int_{\vert \eta\vert\leq \vert \xi \vert/2}
			\langle \xi/2 \rangle^{-2-2\alpha}
			\langle \eta\rangle^{-6} \,\mathrm{d}\eta
			+\int_{\vert \eta\vert\geq \vert \xi \vert/2}
			\langle \eta\rangle^{-6} \,\mathrm{d}\eta  \\
			&\lesssim
			\langle \xi \rangle^{-2-2\alpha}
			\int \langle \eta\rangle^{-6} \,\mathrm{d}\eta
			+ \int_{\vert \eta\vert\geq \vert \xi \vert/2}
			\langle \eta\rangle^{-6} \,\mathrm{d}\eta \\
			&\lesssim \langle \xi \rangle^{-2-2\alpha}
			+\langle \xi \rangle^{-4} \lesssim \langle \xi \rangle^{-2-2\alpha}.
		\end{split}
	\end{equation}
	Therefore we get
	\begin{equation}\label{med2}
		\sum_{k}\Vert\sigma_k^\delta\theta^\delta \Vert_{H^{-3}}^2
		=\int \big\vert \widehat{\theta^\delta}(\xi) \big\vert^2\hat{\psi}(\xi)\, \mathrm{d}\xi
		\lesssim\int \big\vert \widehat{\theta^\delta}(\xi) \big\vert^2 \langle \xi \rangle^{-2-2\alpha}\, \mathrm{d}\xi
		=\Vert \theta^\delta \Vert_{H^{-1-\alpha}}^2
		\lesssim  \Vert \theta^\delta \Vert_{L^p}^2,
	\end{equation}
	the last inequality follows from the Sobolev embedding $L^p\hookrightarrow H^{1-\frac{2}{p}}\hookrightarrow H^{-1-\alpha}$, since $1<p\leq 2$.
	Hence, we obtain
	\begin{equation*}
		S_2\lesssim (t-s)^{\lambda/2}(\Vert \theta_0 \Vert_{L^p}^{\lambda}+o(1)).
	\end{equation*}
	Similarly, for the term $S_3$, we have
	\begin{equation*}
		S_3\leq (t-s)^\lambda\mathbb{E}\bigg[ \sup\limits_{r\in [0,T]}
		\Vert \theta^\delta \Vert_{H^{-2}}^\lambda \bigg]\lesssim (t-s)^{\lambda}(\Vert \theta_0 \Vert_{L^p}^{\lambda}+o(1)).
	\end{equation*}
	Combining the bounds of $S_1$, $S_2$ and $S_3$, we arrive at
	\begin{equation*}
		\mathbb{E}\big[ \Vert \theta_t^\delta-\theta_s^\delta \Vert_{\tilde{H}^{-4}}^\lambda \big]
		\lesssim (t-s)^{\lambda/2}, \quad \text{for all } s,t\in [0,T].
	\end{equation*}
	By the Kolmogorov criterion (see e.g. \cite[Theorem 2.9]{LeGall2016}), we conclude that, for every $0<\gamma<1/2$,
	\begin{equation*}
		\mathbb{E}\Big[ \Vert \theta^\delta \Vert_{C_t^\gamma(\tilde{H}^{-4})}^\lambda \Big] \lesssim_{\beta,p,\lambda,\Vert\theta_0\Vert_{L^p}} 1.
	\end{equation*}
	The proof is complete.
\end{proof}

\begin{rmk}\label{replace}
	It is easy to verify that the equality \eqref{med1'} and the bound \eqref{med2} hold true when we replace $\sigma_k^\delta$ with $\sigma_k$ and $\theta^\delta$ with any $\theta$ in $\dot{H}^{-\frac{\beta}{2}}$.
\end{rmk}

\section{Weak existence}\label{weak-exist}
In this section we prove Theorem \ref{exist},  namely weak existence of \eqref{mSQG}. Combining the a priori bound on energy in Section \ref{energy} and the bound on time continuity in Section \ref{time}, we exploit the classical compactness method, showing tightness of the laws of solutions $(\theta^\delta)_\delta$ to the regularized equation \eqref{reg-mSQG-sol} and showing that any limit is a weak solution.

The key point of the argument is the uniform bound on the $H^{\frac{\beta}{2}-\alpha}$ norm of the velocity field $u^\delta=K_\beta \ast \theta^\delta$. Indeed, for $\beta>2\alpha$, the embedding $H^{\frac{\beta}{2}-\alpha}\hookrightarrow L^2$ is compact on every bounded domain of $\mathbb{R}^2$ and hence the uniform $H^{\frac{\beta}{2}-\alpha}$ bound implies convergence in strong $L_{loc}^2$ topology, which allows to pass to the limit in the nonlinear term.

In order to prove tightness of the laws of the family $(u^\delta)_\delta$, we need to apply the stochastic Aubin-Lions lemma to the triplet of spaces $H^{\frac{\beta}{2}-\alpha}(\mathbb{R}^2)\hookrightarrow
L^2(\mathbb{R}^2)\hookrightarrow H^{-5+\beta}(\mathbb{R}^2)$ and get the compact embedding  $L_t^2(H^{\frac{\beta}{2}-\alpha})\cap C_t^\gamma(H^{-5+\beta})$ into $L_t^2(L^2)$. However, the embedding  $H^{\frac{\beta}{2}-\alpha}(\mathbb{R}^2)\hookrightarrow
L^2(\mathbb{R}^2)$ is not compact. To overcome this technical difficulty, we introduce a suitable weight $w$ in the space $L^2$.

Let $w:\mathbb{R}^2\rightarrow \mathbb{R}$ be a smooth function such that $0<w\leq1$ and
\begin{equation*}
	\sup\limits_{B_R^c}w\rightarrow 0,\quad  \text{as } R\rightarrow \infty,
\end{equation*}
where $B_R=\{x\in \mathbb{R}^2:\vert x \vert<R\}$.
We also assume that the function $w$ decreases at a sufficiently slow rate, for example, we can take $w(x)=(\log \langle x \rangle+1)^{-1}$.
We define the weighted $L^2$ space $L^2(\mathbb{R}^2;w)$ as follows:
\begin{equation*}
	L^2(\mathbb{R}^2;w)=\bigg\{ f\in L_{loc}^2(\mathbb{R}^2): \Vert f\Vert_{L^2(\mathbb{R}^2;w)}^2:=\int_{\mathbb{R}^2} \vert f(x) \vert^2w(x)\, \mathrm{d}x<\infty \bigg\}.
\end{equation*}
The following lemma is from \cite[Lemma 6.1]{CogMau}.
\begin{lem}\label{Aubin-Lions}
	Let $0<2\alpha<\beta<2$, the following embedding is compact:
	\begin{equation*}
		L_t^2 \big(H^{\frac{\beta}{2}-\alpha}(\mathbb{R}^2) \big) \cap C_t^\gamma \big(H^{-5+\beta}(\mathbb{R}^2) \big) \subset L_t^2 \big(L^2(\mathbb{R}^2);w \big).
	\end{equation*}
\end{lem}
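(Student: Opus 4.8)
The plan is to show that every bounded sequence $(f_n)_n$ in $\mathcal Y:=L_t^2\big(H^{\frac{\beta}{2}-\alpha}\big)\cap C_t^\gamma\big(H^{-5+\beta}\big)$ admits a subsequence converging strongly in $L_t^2\big(L^2(\R^2);w\big)$. Write $M:=\sup_n\|f_n\|_{\mathcal Y}<\infty$. The only obstruction to a textbook Aubin--Lions argument is that $H^{\frac{\beta}{2}-\alpha}(\R^2)\hookrightarrow L^2(\R^2)$ fails to be compact on the whole plane (mass may escape to infinity), and the weight $w$ is designed precisely to absorb this defect. Note first that the hypothesis $2\alpha<\beta$ gives $\frac{\beta}{2}-\alpha>0$, so $H^{\frac{\beta}{2}-\alpha}\hookrightarrow L^2$ continuously.

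First I would isolate the tail. Since $0<w\le1$ and $\sup_{B_R^c}w\to0$, for every $\eps>0$ one can fix $R=R(\eps)$ with $\sup_{|x|>R}w(x)<\eps$; then for every $n$,
\[
	\int_0^T\!\!\int_{|x|>R}|f_n|^2\,w\,\d x\,\d t\le\eps\int_0^T\|f_n(t)\|_{L^2}^2\,\d t\lesssim\eps\int_0^T\|f_n(t)\|_{H^{\frac{\beta}{2}-\alpha}}^2\,\d t\le\eps M^2 .
\]
Thus the tail contribution to the $L_t^2(L^2;w)$ norm is uniformly small, and (using $w\le1$ to dominate the bulk weighted norm by the unweighted one) it suffices to obtain compactness of the restrictions $f_n|_{B_R}$ in $L_t^2\big(L^2(B_R)\big)$ for each fixed $R$.

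On the fixed ball $B_R$ I would invoke the Aubin--Lions--Simon lemma for the triple $H^{\frac{\beta}{2}-\alpha}(B_R)\hookrightarrow\hookrightarrow L^2(B_R)\hookrightarrow H^{-5+\beta}(B_R)$: the first embedding is compact by the Rellich--Kondrachov theorem (again because $\frac{\beta}{2}-\alpha>0$), while the last is continuous since $L^2\hookrightarrow H^{-s}$ on a bounded domain for every $s\ge0$. The hypotheses are met because $(f_n)_n$ is bounded in $L_t^2\big(H^{\frac{\beta}{2}-\alpha}(B_R)\big)$, and the $C_t^\gamma(H^{-5+\beta})$ bound supplies the uniform time-equicontinuity $\|f_n(t)-f_n(s)\|_{H^{-5+\beta}(B_R)}\le\|f_n(t)-f_n(s)\|_{H^{-5+\beta}}\lesssim M|t-s|^\gamma$. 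Hence $(f_n|_{B_R})_n$ is relatively compact in $L_t^2\big(L^2(B_R)\big)$.

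Finally I would combine the two steps by a diagonal extraction: taking $\eps=1/j$ and the corresponding radii $R_j$, I extract nested subsequences converging in $L_t^2\big(L^2(B_{R_j})\big)$; the diagonal subsequence is then Cauchy in $L_t^2(L^2;w)$ up to a tail error $\lesssim M^2/j$, hence converges in the complete space $L_t^2(L^2;w)$. The main obstacle throughout is exactly the loss of global Rellich compactness, and everything hinges on using the weight to reduce to balls before applying the classical machinery. A point to handle with care is that $L^2(\R^2;w)$ is strictly larger than $L^2(\R^2)$ and is \emph{not} continuously embedded in $H^{-5+\beta}$, so the abstract Aubin--Lions--Simon lemma cannot be applied globally with middle space $L^2(w)$; localizing to $B_R$ is what circumvents this difficulty.
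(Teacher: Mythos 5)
Your proof is correct. The paper does not actually prove this lemma itself---it imports it verbatim from Coghi--Maurelli \cite[Lemma 6.1]{CogMau}---and your argument (uniform smallness of the tails from the weight $w$ combined with the continuous embedding $H^{\frac{\beta}{2}-\alpha}\hookrightarrow L^2$, the Aubin--Lions--Simon lemma on each ball $B_R$ with the $C_t^\gamma(H^{-5+\beta})$ bound supplying the time equicontinuity, and a final diagonal extraction) is precisely the standard proof of such weighted compactness statements, i.e.\ the argument behind the cited result; the only point worth making explicit is that $H^{s}(B_R)$ for $s=-5+\beta<0$ must be understood with the restriction (quotient) norm so that restriction from $\mathbb{R}^2$ to $B_R$ is norm-nonincreasing, or equivalently one can replace $f_n|_{B_R}$ by $\chi_R f_n$ for a smooth cutoff $\chi_R$, since multiplication by $\chi_R$ is bounded on $H^{-5+\beta}(\mathbb{R}^2)$ by duality.
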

\begin{rmk}\label{emdedding-space}
	In fact, for every $\epsilon>0$, the space $H^{\frac{\beta}{2}-\alpha}(\mathbb{R}^2)$ compactly embedds into $
	H_{loc}^{\frac{\beta}{2}-\alpha-\epsilon}(\mathbb{R}^2)$. For details, see e.g. \cite[Lemma A.4]{GalLuo23}.
\end{rmk}

Now let us turn to the proof of our existence result Theorem \ref{exist}. As in the previous sections, $Q^\delta, G_\beta^\delta$ and $K_\beta^\delta$, $\theta_0^\delta$ satisfy the conditions in Section \ref{sec-prelim} and $\theta^\delta$ is the solution to the regularized model \eqref{reg-mSQG}.

\subsection*{Step 1: Tightness}
Let $u^\delta=K_\beta \ast \theta^\delta$. We will prove the tightness of the laws of the family $(u^\delta)_\delta$  in the space $L_t^2(L^2(\mathbb{R}^2;w))$. Recall the following facts:
\begin{align*}
	\Vert K_\beta\ast f \Vert_{\dot{H}^{-1+\frac{\beta}{2}}}=(2\pi)^{1-\beta}\Vert f \Vert_{\dot{H}^{-\frac{\beta}{2}}}, \\
	\Vert K_\beta\ast f \Vert_{H^{-5+\beta}}=(2\pi)^{1-\beta}\Vert f \Vert_{\tilde{H}^{-4}}.
\end{align*}
And we have
\begin{equation*}
	\begin{split}
		\Vert K_\beta\ast f\Vert_{\dot{H}^{\frac{\beta}{2}-\alpha}}^2
		&=(2\pi)^{2-2\beta}\int_{\mathbb{R}^2}\vert \xi \vert^{-\beta+2-2\alpha}\vert\hat{f}(\xi) \vert^2 \,\mathrm{d}\xi \\
		&\lesssim \int_{\vert \xi\vert\leq 1}
		\vert \xi \vert^{-\beta}\vert\hat{f}(\xi) \vert^2\, \mathrm{d}\xi
		+\int_{\vert \xi\vert\geq 1} \langle \xi \rangle^{-\beta+2-2\alpha}\vert\hat{f}(\xi) \vert^2 \mathrm{d}\xi \\
		&\lesssim \Vert f\Vert_{\dot{H}^{-\frac{\beta}{2}}}^2+\Vert f\Vert_{H^{-\frac{\beta}{2}+1-\alpha}}^2.
	\end{split}
\end{equation*}
By interpolation and Young's inequalities, we get
\begin{equation*}
	\Vert K_\beta\ast f\Vert_{L^2}\leq 	
	\Vert K_\beta\ast f \Vert_{\dot{H}^{-1+\frac{\beta}{2}}}^{1-\lambda}
	\Vert K_\beta\ast f\Vert_{\dot{H}^{\frac{\beta}{2}-\alpha}}^{\lambda}\leq
	\Vert K_\beta\ast f \Vert_{\dot{H}^{-1+\frac{\beta}{2}}}+
	\Vert K_\beta\ast f\Vert_{\dot{H}^{\frac{\beta}{2}-\alpha}},
\end{equation*}
where $\lambda=\frac{1-\beta/2}{1-\alpha}$. Hence we obtain
\begin{equation*}
	\Vert K_\beta\ast f\Vert_{H^{\frac{\beta}{2}-\alpha}}^2
	=
	\Vert K_\beta\ast f\Vert_{L^2}^2+
	\Vert K_\beta\ast f\Vert_{\dot{H}^{\frac{\beta}{2}-\alpha}}^2
	\lesssim
	\Vert f\Vert_{\dot{H}^{-\frac{\beta}{2}}}^2+\Vert f\Vert_{H^{-\frac{\beta}{2}+1-\alpha}}^2.
\end{equation*}
By Proposition \ref{energy-bound} and Lemma \ref{time-continuity}, we get, for $0<\gamma<1/2$ and some constant $C>0$,
\begin{align}
	&\limsup\limits_{\delta\rightarrow0}\bigg( \sup\limits_{t\in [0,T]}\mathbb{E}\left[ \Vert u_t^\delta \Vert_{\dot{H}^{-1+\frac{\beta}{2}}}^2 \right] + \int_{0}^{T}\mathbb{E}\left[ \Vert u_t^\delta \Vert_{H^{\frac{\beta}{2}-\alpha}}^2 \right]\mathrm{d}t\bigg)
	\leq C\Vert \theta_0 \Vert_{\dot{H}^{-\frac{\beta}{2}}}^2,\label{bd1}\\
	&\limsup\limits_{\delta\rightarrow 0}\mathbb{E}\Big[ \Vert u^\delta \Vert_{C_t^\gamma(H^{-5+\beta})}\Big]\leq C.\label{bd2}
\end{align}
By Lemma \ref{Aubin-Lions}, for $M>0$, the set
\begin{equation*}
	\mathcal{K}_M=\Big\{f\in L_t^2(L^2(\mathbb{R}^2);w): \Vert f \Vert_{L_t^2(H^{\frac{\beta}{2}-\alpha})} + \Vert f \Vert_{C_t^\gamma(H^{-5+\beta})}\leq M \Big\}
\end{equation*}
is compact in $L_t^2(L^2(\mathbb{R}^2);w)$.
We have, by Chebyshev's inequality,
\begin{equation*}
	\begin{split}
		\P(u^\delta \notin \mathcal{K}_M)&\leq
		\P\left(\Vert u^\delta \Vert_{L_t^2(H^{\frac{\beta}{2}-\alpha})}>\frac{M}{2}\right) + \P\left(\Vert u^\delta \Vert_{C_t^\gamma(H^{-5+\beta})}>\frac{M}{2}\right)\\
		&\leq \frac{4}{M^2}\mathbb{E}\left[ \Vert u^\delta \Vert_{\dot{H}^{-1+\frac{\beta}{2}}}^2 \right]
		+ \frac{4}{M^2}\mathbb{E}\Big[ \Vert u^\delta \Vert_{C_t^\gamma(H^{-5+\beta})}\Big].
	\end{split}
\end{equation*}
By the bounds $\eqref{bd1}$ and $\eqref{bd2}$, we can choose $M$ such that the right-hand side is arbitrarily small. Hence the laws of $(u^\delta)_\delta$ are tight in $L_t^2(L^2(\mathbb{R}^2);w)$. Consequently, the laws of $((u^\delta)_\delta,(B^k)_k)$ are tight in $L_t^2(L^2(\mathbb{R}^2);w)\times C_t^\mathbb{N}$ (with $\mathcal{B}(L_t^2(L^2(\mathbb{R}^2);w))\otimes \mathcal{B}(C_t)^\mathbb{N}$ as $\sigma$-algebra).

\subsection*{Step 2: $\P$-a.s. convergence of a subsequence of copies}
By Skorohod representation theorem \cite[Theorem 5.31]{Kallenberg}, there exists a complete probability space $(\tilde{\Omega},\tilde{\mathcal{F}},\tilde{\P})$, a sequence of $L_t^2(L^2(\mathbb{R}^2);w)\times C_t^\mathbb{N}$-valued random variables $(\tilde{u}^{\delta_n},(\tilde{B}^{k,\delta_n})_k)$ on $(\tilde{\Omega},\tilde{\mathcal{F}},\tilde{\P})$, with $\delta_n\rightarrow 0$ as $n\rightarrow \infty$, and a $L_t^2(L^2(\mathbb{R}^2);w)\times C_t^\mathbb{N}$-valued random variable $(\tilde{u},(\tilde{B}^k)_k)$ such that each $(\tilde{u}^{\delta_n},(\tilde{B}^{k,\delta_n})_k)$
has the same law as $(u^{\delta_n},(B^{k})_k)$ and the sequence $(\tilde{u}^{\delta_n},(\tilde{B}^{k,\delta_n})_k)_n$ converges $\tilde{\P}$-a.s. in the topology of $L_t^2(L^2(\mathbb{R}^2);w)\times C_t^\mathbb{N}$ to $(\tilde{u},(\tilde{B}^k)_k)$ as $n\rightarrow \infty$. We call
\begin{equation*}
	\tilde{\theta}^{\delta_n}=(-\Delta)^{-1+\frac{\beta}{2}} (\nabla^{\perp}\cdot\tilde{u}^{\delta_n}),\quad
	\tilde{\theta}=(-\Delta)^{-1+\frac{\beta}{2}}(\nabla^{\perp}\cdot \tilde{u}).
\end{equation*}

We claim that $\tilde{\theta}$ satisfies the bounds \eqref{path-bd} and \eqref{exist-bound}. Since each $\tilde{u}^{\delta_n}$ has the same law as $u^{\delta_n}$, there exists a version of $\tilde{u}^{\delta_n}$ (still called $\tilde{u}^{\delta_n}$) which has H\"older continuous paths with values in $H^{-5+\beta}$ and satisfies the bounds \eqref{bd1} and \eqref{bd2}. Observe that the norms in $C_t^\gamma(H^{-5+\beta})$ and $ L_t^\infty (\dot{H}^{-1+\frac{\beta}{2}})$ are lower semicontinuous functions in $L_t^2(L^2(\mathbb{R}^2);w)$ (see e.g. \cite[Lemma B.2]{CogMau}), the limit $\tilde{u}$ (a version) has trajectories in $L_t^\infty (\dot{H}^{-1+\frac{\beta}{2}})\cap C_t^\gamma(H^{-5+\beta})$.
The uniform $L^p$ bound of trajectories of $\tilde{\theta}$ in \eqref{path-bd}  is  verified as follows.
For every $t\in (0,T),h>0$ sufficiently small and $\varphi\in C_c^\infty$ with $ \Vert \varphi\Vert_{L^{p'}}\leq 1$, we have $\tilde\P$-a.s.,
\begin{align*}
	\frac{1}{2h}\int_{t-h}^{t+h}\langle \tilde{\theta}_s, \varphi\rangle\, \mathrm{d}s
	&=\frac{1}{2h}\int_{t-h}^{t+h}\langle \tilde{\theta}_s-\tilde{\theta}_s^{\delta_n}, \varphi\rangle\, \mathrm{d}s
	+\frac{1}{2h}\int_{t-h}^{t+h}\langle \tilde{\theta}_s^{\delta_n}, \varphi\rangle\, \mathrm{d}s  \\
	&\leq \frac{1}{\sqrt{2h}}\bigg(\int_{0}^{T} \vert \langle \tilde{\theta}_t^{\delta_n}-\tilde{\theta}_t,\varphi \rangle\vert^2\,\mathrm{d}t\bigg)^{1/2}+ \Vert \theta_0^{\delta_n} \Vert_{L^p}.
\end{align*}
Letting $n\rightarrow\infty$, from Lemma \ref{addition} below, we have $\frac{1}{2h}\int_{t-h}^{t+h}\langle \tilde{\theta}_s, \varphi\rangle \mathrm{d}s\leq \Vert \theta_0 \Vert_{L^p}$.  Next let $h\rightarrow0$ and the time continuity implies that $\langle \tilde{\theta}_t,\varphi \rangle \leq \Vert \theta_0\Vert_{L^p}$, $\tilde\P$-a.s. for all $t\in [0,T]$. Then taking supreme at left-hand side for  $\varphi\in C_c^\infty$ with $ \Vert \varphi\Vert_{L^{p'}}\leq 1$, we have $\sup_{t\in [0,T]} \Vert \tilde{\theta}_t \Vert_{L^p} \leq \Vert \theta_0 \Vert_{L^p}$.
Hence we have the bound \eqref{path-bd}. The proof of the bound \eqref{exist-bound} is very similar, see also \cite[Section 6]{CogMau}.

Now by classical technique, for each $n$, we can construct a filtration $(\tilde{\mathcal{F}}_t^{\delta_n})_t$ such that $(\tilde{B}^{\delta_n,k})_k$ is a $(\tilde{\mathcal{F}}_t^{\delta_n})_t$ cylindrical Brownian motion and $\tilde{u}^{\delta_n}$ is $(\tilde{\mathcal{F}}_t^{\delta_n})_t$ progressively measurable. The analogous statement holds for $(\tilde{\mathcal{F}}_t)_t$, $(\tilde{B}^k)_k$ and $\tilde{u}$. Moreover, for each $n$, the object $\big(\tilde{\Omega}, \tilde{\mathcal{F}}, (\tilde{\mathcal{F}}_t^{\delta_n})_t, \tilde{\P}, (\tilde{B}^{\delta_n,k})_k, \tilde{\theta}^{\delta_n}\big)$ is a weak solution to regularized model \eqref{reg-mSQG} with $\delta=\delta_n$.

\subsection*{Step 3: Passage to the limit}
For simplicity of notation, we will omit the tildes and write $(\tilde{\theta}^{\delta_n}, \tilde{u}^{\delta_n}, (\tilde{B}^{\delta_n,k})_k)$ as $(\theta^{\delta_n}, u^{\delta_n}, (B^{\delta_n,k})_k)$, and $(\tilde{\theta}, \tilde{u}, (\tilde{B}^k)_k$ as $(\theta, u, (B^k)_k)$. Let $\varphi \in C_c^\infty(\mathbb{R}^2)$, then $\theta^{\delta_n}=(-\Delta)^{-1+\frac{\beta}{2}} (\nabla^{\perp}\cdot u^{\delta_n})$ satisfies the equation
\begin{equation}\label{eqn-step3}
	\begin{split}
		\langle \theta_t^{\delta_n},\varphi \rangle
		&=\langle \theta_0^{\delta_n},\varphi \rangle
		+\int_{0}^{t}\langle (K_\beta^{\delta_n}\ast \theta_r^{\delta_n})\theta_r^{\delta_n},\nabla \varphi\rangle\, \mathrm{d}r
		+\sum_{k}\int_{0}^{t}\langle \sigma_k^{\delta_n}\theta_r^{\delta_n},\nabla\varphi \rangle\, \mathrm{d}B_r^{\delta_n,k}  \\
		&\quad+\frac{c_{\delta_n}}{2}\int_{0}^{t}\langle \theta_r^{\delta_n},\Delta \varphi \rangle\, \mathrm{d}r \\
		&:=A_1+A_2+A_3+A_4.
	\end{split}
\end{equation}
Now we want to let $n\rightarrow \infty$ in each term, possibly along a subsequence, to recover an equation for $\langle \theta,\varphi\rangle$. The following lemma is needed.
\begin{lem}\label{addition}
	Given a function $\varphi \in C_c^\infty(\mathbb{R}^2)$, we have $\P$-a.s.,
	\begin{equation*}
		\int_{0}^{T} \vert \langle \theta_t^{\delta_n}-\theta_t,\varphi \rangle\vert^2\, \mathrm{d}t\rightarrow0, \quad\text{as } n\rightarrow\infty.
	\end{equation*}
\end{lem}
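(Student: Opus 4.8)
The plan is to reduce the convergence of the scalar pairings to the already-established convergence of the velocities $u^{\delta_n}\to u$ in $L^2_t(L^2(\R^2;w))$, by transferring the Fourier multiplier onto the test function. Recall from \textbf{Step 2} that $\theta^{\delta_n}=(-\Delta)^{-1+\frac\beta2}(\nabla^{\perp}\cdot u^{\delta_n})$ and $\theta=(-\Delta)^{-1+\frac\beta2}(\nabla^{\perp}\cdot u)$, and that $u^{\delta_n}\to u$ in $L^2_t(L^2(\R^2;w))$ holds $\P$-a.s. Since $(-\Delta)^{-1+\frac\beta2}$ is a self-adjoint Fourier multiplier and $\nabla^{\perp}\cdot$ integrates by parts, for a fixed $\varphi\in C_c^\infty(\R^2)$ I introduce the fixed vector field
\[
\psi_\varphi:=-\nabla^{\perp}\big[(-\Delta)^{-1+\frac\beta2}\varphi\big],
\]
so that, for a.e. $t$,
\[
\langle \theta_t^{\delta_n}-\theta_t,\varphi\rangle=\big\langle u_t^{\delta_n}-u_t,\,\psi_\varphi\big\rangle.
\]
Thus it suffices to test the difference $u^{\delta_n}-u$ against the single fixed field $\psi_\varphi$.

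The core estimate needed is the weighted integrability $\int_{\R^2}|\psi_\varphi(x)|^2/w(x)\,\d x<\infty$. On the Fourier side one checks $|\hat\psi_\varphi(\xi)|\asymp|\xi|^{\beta-1}|\hat\varphi(\xi)|$; since $\beta>1$ this is bounded near the origin and inherits the rapid decay of $\hat\varphi$ at infinity, so $\psi_\varphi$ is smooth, hence bounded on compacts. For the spatial decay I write $\psi_\varphi=-(\nabla^{\perp}G_{2-\beta})\ast\varphi$ with $G_{2-\beta}(x)\asymp|x|^{-\beta}$; away from the compact support of $\varphi$ the kernel satisfies $|\nabla^{\perp}G_{2-\beta}(x)|\asymp|x|^{-\beta-1}$, which yields $|\psi_\varphi(x)|\lesssim\langle x\rangle^{-\beta-1}$. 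Because $\beta>1$ gives $2\beta+2>4$, and the weight $1/w(x)=\log\langle x\rangle+1$ grows only logarithmically, the tail integral converges, whence $\int_{\R^2}|\psi_\varphi|^2/w\,\d x<\infty$.

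With this in hand, Cauchy--Schwarz with the weight $w$ gives, for a.e. $t$,
\[
\big|\langle \theta_t^{\delta_n}-\theta_t,\varphi\rangle\big|
=\Big|\int_{\R^2}(u_t^{\delta_n}-u_t)\sqrt w\cdot\frac{\psi_\varphi}{\sqrt w}\,\d x\Big|
\le\|u_t^{\delta_n}-u_t\|_{L^2(\R^2;w)}\Big(\int_{\R^2}\frac{|\psi_\varphi|^2}{w}\,\d x\Big)^{1/2}.
\]
Squaring and integrating in time then yields
\[
\int_{0}^{T}\big|\langle \theta_t^{\delta_n}-\theta_t,\varphi\rangle\big|^2\,\d t
\le\Big(\int_{\R^2}\frac{|\psi_\varphi|^2}{w}\,\d x\Big)\,\|u^{\delta_n}-u\|_{L^2_t(L^2(\R^2;w))}^2,
\]
and the right-hand side tends to $0$ $\P$-a.s. by the Skorohod convergence, which completes the proof.

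The main obstacle is the decay analysis of $\psi_\varphi$: one must confirm that applying $\nabla^{\perp}(-\Delta)^{-1+\frac\beta2}$ to a compactly supported $\varphi$ produces a smooth field decaying like $\langle x\rangle^{-\beta-1}$, which is precisely what makes $|\psi_\varphi|^2/w$ integrable and is where the restriction $\beta>1$ enters. A secondary point is justifying the duality identity for the merely distributional limits $\theta$ and $u$; this is handled by noting that all quantities live in the dual pair $\dot H^{-1+\frac\beta2}$--$\dot H^{1-\frac\beta2}$ (indeed $\psi_\varphi\in\dot H^{1-\frac\beta2}$ since $\int|\xi|^{\beta}|\hat\varphi(\xi)|^2\,\d\xi<\infty$), so the transfer of the multiplier is legitimate.
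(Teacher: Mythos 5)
Your proposal is correct and takes essentially the same approach as the paper: both transfer the pairing to the velocities via $\psi=\pm\nabla^{\perp}(-\Delta)^{-1+\frac{\beta}{2}}\varphi$, check $\int_{\mathbb{R}^2}|\psi|^2/w\,\mathrm{d}x<\infty$ from the pointwise decay of $\psi$ (possible because $\beta>1$ and $1/w$ grows only logarithmically), and conclude by weighted Cauchy--Schwarz together with the $\P$-a.s. convergence $u^{\delta_n}\to u$ in $L_t^2(L^2(\mathbb{R}^2);w)$. The only cosmetic difference is that the paper keeps the derivative on the test function, writing $\psi=G_{2-\beta}\ast(\nabla^{\perp}\varphi)$ and obtaining the decay $\langle x\rangle^{-\beta}$, which sidesteps the principal-value interpretation of $(\nabla^{\perp}G_{2-\beta})\ast\varphi$ near $\mathrm{supp}\,\varphi$ that your Fourier-side smoothness argument handles separately; either decay rate suffices since $2\beta>2$.
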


\begin{proof}
	Let $\psi = \nabla^{\perp} (-\Delta)^{-1+\frac{\beta}{2}}\varphi=G_{2-\beta}\ast (\nabla^{\perp}\varphi)$, then we have
	\begin{equation*}
		\psi(x)=c_\beta\int_{\mathbb{R}^2}\frac{\nabla^{\perp}\varphi(y)}{\vert x-y\vert^{\beta}}\, \mathrm{d}y
		=c_\beta\int_{\mathrm{supp}(\varphi)}\frac{\nabla^{\perp}\varphi(y)}{\vert x-y\vert^{\beta}}\, \mathrm{d}y
		\lesssim_{\varphi,\beta} \langle x \rangle^{-\beta},\quad \forall x\in \mathbb{R}^2.
	\end{equation*}
	Hence, by H\"older inequality, we have
	\begin{align*}
		\int_{0}^{T} \vert \langle \theta_t^{\delta_n}-\theta_t,\varphi \rangle\vert^2\, \mathrm{d}t
		=\int_{0}^{T} \vert \langle u_t^{\delta_n}-u_t,\psi \rangle\vert^2\, \mathrm{d}t
		\leq T\int_{0}^{T}\Vert (u_t^{\delta_n}-u_t)\sqrt{w} \Vert_{L^2}^2\, \mathrm{d}t
		\int_{\mathbb{R}^2}\frac{\vert \psi(x)\vert^2}{\vert w(x)\vert}\,\mathrm{d}x.
	\end{align*}
	By the growth condition on $w$ and the above estimate, we know the integral $\int\frac{\vert \psi(x)\vert^2}{\vert w(x)\vert}\, \mathrm{d}x$ is finite, then the conclusion follows from the convergence of $u^{\delta_n}$ to $u$ in the space
	$ L_t^2(L^2(\mathbb{R}^2);w)$.
\end{proof}
By Lemma \ref{addition}, it is obvious that, $\P$-a.s., the left-hand side of \eqref{eqn-step3} converges to $\langle \theta_t,\varphi\rangle$ in $L^2([0,T])$.
The same argument with the fact $c_{\delta_n}\rightarrow \frac{\pi}{2\alpha}$ yields that $\P\text{-a.s.}$,
\begin{equation*}
	A_4\rightarrow \frac{\pi}{4\alpha}\int_{0}^{t}\langle \theta_r,\Delta \varphi\rangle\, \mathrm{d}r \quad\text{in } C([0,T]).
\end{equation*}

The convergence of $A_1$ follows from the condition $\theta_0^{\delta_n}\rightarrow\theta_0$ in $\dot{H}^{-\frac{\beta}{2}}$ as $n\rightarrow\infty$.

Concerning the convergence of the stochastic term $A_3$, we use a classical result (see \cite[Lemma 2.1]{DEBUSSCHE20111123}): if
\begin{equation}\label{quadratic-var}
	\sum_{k}\int_{0}^{T}\big\vert \langle \sigma_k^{\delta_n}\theta_r^{\delta_n},\nabla \varphi-\langle \sigma_k\theta_r,\nabla \varphi\rangle \big\vert^2 \mathrm{d}r \rightarrow0 \quad\text{in probability,}
\end{equation}
then
\begin{equation*}
	\sup\limits_{t\in [0,T]}\bigg\vert \sum_{k}\int_{0}^{t}\langle \sigma_k^{\delta_n}\theta_r^{\delta_n},\nabla\varphi \rangle\, \mathrm{d}B_r^{\delta_n,k}-
	\sum_{k}\int_{0}^{t}\langle \sigma_k\theta_r,\nabla\varphi \rangle\, \mathrm{d}B_r^{k} \bigg\vert\rightarrow0  \quad\text{in probability.}
\end{equation*}
To show \eqref{quadratic-var}, we split as follows:
\begin{align*}
	&\sum_{k}\int_{0}^{T}\big\vert \langle \sigma_k^{\delta_n}\theta_r^{\delta_n},\nabla \varphi-\langle \sigma_k\theta_r,\nabla \varphi\rangle \big\vert^2\, \mathrm{d}r \\
	&\lesssim
	\sum_{k}\int_{0}^{T}\big\vert \langle \sigma_k(\theta_r^{\delta_n}-\theta_r),\nabla \varphi \rangle\big\vert^2\, \mathrm{d}r
	+\sum_{k}\int_{0}^{T}\big\vert \langle (\sigma_k^{\delta_n}-\sigma_k)\theta_r^{\delta_n},\nabla \varphi \rangle \big\vert^2\, \mathrm{d}r \\
	&=:A_{31}+A_{32}.
\end{align*}
For the term $A_{31}$, we split again:
\begin{align*}
	A_{31}&=\sum_{k\leq N}\int_{0}^{T}\big\vert \langle \sigma_k(\theta_r^{\delta_n}-\theta_r),\nabla \varphi \rangle \big\vert^2\, \mathrm{d}r
	+\sum_{k> N}\int_{0}^{T}\big\vert \langle \sigma_k(\theta_r^{\delta_n}-\theta_r),\nabla \varphi \rangle\big\vert^2\, \mathrm{d}r.
\end{align*}
Note that for each $N$ fixed, the first term in the right-hand side tends to $0$ as $n\rightarrow\infty$ by Lemma \ref{addition}.
For each $N\in \mathbb{N}$, take $Q_N(x,y)=\sum_{k>N}\sigma_k(x)\otimes \sigma_k(y)$, it is known that $Q_N$ converges to zero uniformly on each compact set (see e.g. \cite[Lemma 2.3]{GalLuo23}), so we have $\Vert Q_N \Vert_{L^\infty(B_R\times B_R)}\rightarrow0$ as $N\rightarrow\infty$, where $R>0$ such that $\mathrm{supp}(\varphi)\subset B_R$. Then we have $\P\text{-a.s.}$,
\begin{align*}
	&\sum_{k> N}\int_{0}^{T}\vert \langle \sigma_k(\theta_r^{\delta_n}-\theta_r),\nabla \varphi \rangle\vert^2 \mathrm{d}r\\
	&=
	\int_{0}^{T}\iint\limits
	(\theta_r^{\delta_n}-\theta_r)(x)\nabla\varphi(x)\cdot Q_N(x,y) \nabla\varphi(y) (\theta_r^{\delta_n}-\theta_r)(y)\, \mathrm{d}x\mathrm{d}y \,\mathrm{d}r  \\
	&\leq \Vert Q_N \Vert_{L^\infty(B_R\times B_R)}\int_{0}^{T} \langle \vert\theta_r^{\delta_n}-\theta_r\vert, \vert\nabla\varphi\vert  \rangle^2\, \mathrm{d}r      \\
	&\leq T\Vert Q_N \Vert_{L^\infty(B_R\times B_R)}\big(\Vert \theta_0^{\delta_n}\Vert_{L^p}^2+\Vert\theta_0 \Vert_{L^p}^2\big) \Vert \nabla\varphi \Vert_{L^{p'}}^2  \\
	&\lesssim_{\Vert \theta_0 \Vert_{L^p},T,\varphi} \Vert Q_N \Vert_{L^\infty(B_R\times B_R)}\rightarrow 0,\quad \text{ as } N\rightarrow \infty.
\end{align*}
Thus we get $\P\text{-a.s.}$, $A_{31}\rightarrow0$ .
For the term $A_{32}$, we have
\begin{equation*}
	A_{32}\leq \Vert \varphi \Vert_{H^4}^2 \sum_{k}\int_{0}^{T}\Vert (\sigma_k^{\delta_n}-\sigma_k)\theta_r^{\delta_n} \Vert_{H^{-3}}^2 \,\mathrm{d}r.
\end{equation*}
Recalling Remark \ref{replace} and proceeding as the equalities \eqref{med1} and \eqref{med1'}, replacing $\sigma_k^\delta$ by $\sigma_k^{\delta_n}-\sigma_k$, we get
\begin{align*}
	\sum_{k}\Vert (\sigma_k^{\delta_n}-\sigma_k)\theta_r^{\delta_n} \Vert_{H^{-3}}^2
	=\int \big\vert \widehat{\theta_r^{\delta_n}}(\xi) \big\vert^2\hat{\phi}(\xi)\, \mathrm{d}\xi,
\end{align*}
where
\begin{equation*}
	\phi(x)=\mathrm{tr}\big[Q+Q^{\delta_n}-2Q^{\delta_n,h}\big](x)\int\langle \xi \rangle^{-6}e^{-2\pi ix\cdot \xi} \, \mathrm{d}\xi.
\end{equation*}
For the Fourier transform of $\phi$, proceeding as \eqref{med3} we have, for every $\xi$,
\begin{align*}
	\hat{\phi}(\xi)
	&=\int \mathrm{tr}\big[\hat{Q}+\widehat{Q^{\delta_n}}-2\widehat{Q^{\delta_n,h}}\big](\xi-\eta)\langle \eta \rangle^{-6}\, \mathrm{d}\eta
	\lesssim \int_{\vert \xi-\eta \vert>1/\delta_n}
	\langle \xi-\eta \rangle^{-2-2\alpha}\langle \eta \rangle^{-6}\, \mathrm{d}\eta  \\
	&\leq \delta ^{2\alpha}\int_{\mathbb{R}^2}
	\langle \xi-\eta \rangle^{-2}
	\langle \eta \rangle^{-6}\, \mathrm{d}\eta
	\lesssim \delta ^{2\alpha}\langle \xi \rangle^{-2}.
\end{align*}
As a consequence,
\begin{align*}
	\sum_{k}\Vert (\sigma_k^{\delta_n}-\sigma_k)\theta_r^{\delta_n} \Vert_{H^{-3}}^2
	\lesssim  \delta_n^{2\alpha}\int \big\vert \widehat{\theta_r^{\delta_n}}(\xi) \big\vert^2 \langle \xi \rangle^{-2}\, d\xi
	\lesssim \delta_n^{2\alpha} \Vert \theta_r^{\delta_n} \Vert_{\dot{H}^{-\frac{\beta}{2}}}^2.
\end{align*}
Hence we obtain, as $n\rightarrow\infty$,
\begin{equation*}
	\mathbb{E}[A_{32}]\lesssim \Vert \varphi \Vert_{H^4}^2 \delta_n^{2\alpha} \int_{0}^{T}\mathbb{E}\left[\Vert \theta_r^{\delta_n} \Vert_{\dot{H}^{-\frac{\beta}{2}}}^2\right]\mathrm{d}r\rightarrow0.
\end{equation*}
Summarizing the above arguments, we see that \eqref{quadratic-var} holds, so along a subsequence, we have, $\P\text{-a.s.}$,
\begin{equation*}
	A_3\rightarrow \sum_{k}\int_{0}^{t}\langle \sigma_k\theta_r,\nabla\varphi \rangle\, \mathrm{d} B_r^{k}  \quad\text{in } C([0,T]).
\end{equation*}

To cope with the nonlinear term $A_2$, recalling that $u^{\delta_n}=K_\beta\ast \theta^{\delta_n}$, we have
\begin{equation*}
	\begin{split}
		&\int_{0}^{t}\big(\big\langle (K_\beta^{\delta_n}\ast \theta_r^{\delta_n})\theta_r^{\delta_n},\nabla \varphi\big\rangle-
		\big\langle u_r\theta_r, \nabla \varphi  \big\rangle\big)\, \mathrm{d}r    \\
		&=
		\int_{0}^{t}\big\langle ((K_\beta^{\delta_n}-K_\beta)\ast \theta_r^{\delta_n}) \theta_r^{\delta_n}, \nabla\varphi \big\rangle\, \mathrm{d}r
		+\int_{0}^{t}\langle (u_r^{\delta_n}-u_r)\theta_r^{\delta_n}, \nabla \varphi \rangle\, \mathrm{d}r  \\
		&\quad+ \int_{0}^{t}\langle (\theta_r^{\delta_n}-\theta_r)u_r, \nabla \varphi \rangle\, \mathrm{d}r  \\
		&=:A_{21}+A_{22}+A_{23}.
	\end{split}
\end{equation*}
Proceeding as the estimate \eqref{A21}, for every $1\leq \lambda<\infty$, we have $	\mathbb{E}\big[\sup_{t\in [0,T]}A_{21}^\lambda\big]\rightarrow0.$
Hence $A_{21}$ tends to $0$ in $C([0,T])$ $\P$-a.s.. For term $A_{22}$, recalling Remark \ref{emdedding-space}, we have $u^{\delta_n}\rightarrow u$ in $L_t^2(H_{loc}^s)$ for all $0<s<\beta/2-\alpha$. Suppose the support of $\varphi$ is contained in $B_R$, hence by Sobolev embedding with exponent $\frac{1}{q}=\frac{1}{2}-\frac{s}{2}$ and H\"older inequality, we have $\P$-a.s.,
\begin{equation}\label{A22}
	\begin{split}
		\sup\limits_{t\in [0,T]}	A_{22}&\leq
		\Vert u^{\delta_n}-u \Vert_{L_t^2(L^q(B_R))}  \bigg(\int_{0}^{T} \Vert \theta_r^{\delta_n}\nabla\varphi\Vert_{L^{q'}}^2\, \mathrm{d}r\bigg)^{\frac{1}{2}}  \\
		&\leq
		\Vert u^{\delta_n}-u \Vert_{L_t^2(H^s(B_R))}  \sqrt{T}\Vert \nabla\varphi \Vert_{L^\infty} \sup\limits_{t\in [0,T]} \Vert \theta_t^{\delta_n} \Vert_{L^{q'}(B_R)} \\
		&\lesssim \Vert u^{\delta_n}-u \Vert_{L_t^2(H^s(B_R))}
		\sup\limits_{t\in [0,T]} \Vert \theta_t^{\delta_n} \Vert_{L^{p}} \\
		&\lesssim \Vert u^{\delta_n}-u \Vert_{L_t^2(H^s(B_R))}
		\Vert \theta_0^{\delta_n} \Vert_{L^{p}} \rightarrow 0,
	\end{split}
\end{equation}
Note that we have used the condition $p>\frac{2}{1+\beta/2-\alpha}$ to guarantee $p\geq q'$.

Because $\theta^{\delta_n}$ and $u^{\delta_n}$ are related by a nonlocal operator, it is difficult to obtain strong convergence of $\theta^{\delta_n}$ in some space from the strong convergence of $u^{\delta_n}$ in $L_t^2(L^2(\mathbb{R}^2);w)$. As a consequence, we can just obtain convergence of the term $A_{23}$ in some weak sense, which is different from the case in \cite{CogMau}.
Now we prove the term $A_{23}$ converges weakly to zero in the space $L^2(\Omega\times [0,T])$. Consider the map from $L^2(\Omega\times [0,T];L^p)$ to $L^2(\Omega\times [0,T])$ given by
\begin{equation*}
	y(\cdot)\longmapsto \int_{0}^{\cdot}\langle y_r(\cdot), u_r\cdot\nabla\varphi\rangle\, \mathrm{d}r.
\end{equation*}
By the same trick used to deal with the term $A_{22}$, we can show that this map is linear and bounded, hence it is also weakly continuous. It is straightforward to check that $\theta^{\delta_n}\rightharpoonup \theta$ in $L^2(\Omega\times [0,T];L^p)$ up to a subsequence, hence we have
\begin{equation*}
	A_{23}=  \int_{0}^{\cdot}\langle (\theta_r^{\delta_n}-\theta_r)u_r, \nabla \varphi \rangle\, \mathrm{d}r
	\rightharpoonup 0\quad \text{in }  L^2(\Omega\times [0,T]).
\end{equation*}

From the energy bounds, time continuity bounds and the $\P$-a.s. convergence of other terms, we know that all terms in the regularized equation  \eqref{eqn-step3} converge in $L^2(\Omega\times [0,T])$ weakly to the corresponding term for $\theta $.
Thus, for every $X\in L^\infty(\Omega),\phi\in C_c^\infty([0,T])$, we have
\begin{align*}
	&\mathbb{E}\bigg[\int_{0}^{T} X\phi(t)\langle \theta_t,\varphi \rangle\, \mathrm{d}t\bigg]
	=\mathbb{E}\bigg[ \int_{0}^{T}X\phi(t)\langle \theta_0,\varphi \rangle\, \mathrm{d}t \bigg]
	+ \mathbb{E}\bigg[ \int_{0}^{T}X\phi(t)\int_{0}^{t}\langle u_r\theta_r^{\delta_n},\nabla \varphi\rangle\, \mathrm{d}r \mathrm{d}t \bigg]  \\
	&\quad+\mathbb{E}\bigg[ \int_{0}^{T}X\phi(t)\sum_{k}\int_{0}^{t}\langle \sigma_k\theta_r,\nabla\varphi \rangle\, \mathrm{d}B_r^k  \mathrm{d}t \bigg]
	+\frac{\alpha}{4\pi}\mathbb{E}\bigg[ \int_{0}^{T}X\phi(t)	\int_{0}^{t}\langle \theta_r,\Delta \varphi \rangle\, \mathrm{d}r \mathrm{d}t \bigg].
\end{align*}
By the arbitrariness of $X$ and $\phi$, we obtain $\P$-a.s., for a.e. $t\in [0,T]$,
\begin{equation}\label{weak-sol}
	\langle \theta_t,\varphi \rangle=\langle \theta_0,\varphi \rangle
	+\int_{0}^{t}\langle u_r\theta_r,\nabla \varphi\rangle\, \mathrm{d}r
	+\sum_{k}\int_{0}^{t}\langle \sigma_k\theta_r,\nabla\varphi \rangle\, \mathrm{d}B_r^k
	+\frac{\alpha}{4\pi}\int_{0}^{t}\langle \theta_r,\Delta \varphi \rangle\, \mathrm{d}r.
\end{equation}
Since all terms in \eqref{weak-sol} are continuous in time, this equation holds for every $t\in [0,T]$ on a $\P$-null set independent of $t$.
\subsection*{Step 4: Conclusion}
To conclude that $\theta$ is a weak solution to $\eqref{mSQG}$, it is enough to remove the test function in the formulation \eqref{weak-sol}. For each $\varphi \in  C_c^\infty(\mathbb{R}^2)$, equation \eqref{weak-solution} holds tested against $\varphi$ on a $\P$-null set $\Omega_0$ which might depend on $\varphi$.  We can make the $\P$-null set $\Omega_0$ independent of $\varphi$, for $\varphi \in  C_c^\infty(
\mathbb{R}^2)$ in a countable dense set of $H^4(\mathbb{R}^2)$. Then equation \eqref{weak-solution} holds for every $t\in [0,T]$ on the $\P$-null set $\Omega_0$, which completes the proof of Theorem \ref{exist}.

\section{Pathwise uniqueness}\label{sec-uniqueness}
In this section, we prove Theorem \ref{unique}, that is, pathwise uniqueness of $L^1\cap L^p$ solutions to \eqref{mSQG}. The idea of the proof is to estimate the $\dot{H}^{-\frac{\beta}{2}}$ norm of the difference of two solutions $\theta^1$ and $\theta^2$, where the main point is to take advantage of the control of the $H^{-\frac{\beta}{2}+1-\alpha}$ norm of $\theta^1-\theta^2$ induced by the noise to cancel the singularity generated by nonlinear terms. 

The following classical lemma is needed.
\begin{lem}\label{harmonic}
	Let $a+b>0$ and $-1<a,b<1$, we have
	\begin{equation*}
		\Vert fg\Vert_{\dot{H}^{a+b-1}}\lesssim \Vert f\Vert_{\dot{H}^a}\Vert g\Vert_{\dot{H}^b}.
	\end{equation*}
\end{lem}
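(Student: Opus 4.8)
The plan is to prove this product law by a homogeneous Littlewood--Paley analysis, the standard route for such estimates: in dimension $d=2$, where $d/2=1$, it is exactly the classical product law $\dot H^a\cdot\dot H^b\hookrightarrow \dot H^{a+b-d/2}$ valid for $a,b<d/2$ and $a+b>0$ (see e.g. \cite[Corollary 2.55]{Bahouri2011FourierAA}). First I note that the hypotheses $a,b<1$ and $a+b>0$ already force $a>-b>-1$ and $b>-a>-1$, so the lower bounds $-1<a,b$ are automatic and all three exponents $a,b,a+b-1$ lie in the admissible range $(-1,1)=(-d/2,d/2)$ in which the homogeneous norms are characterised by $\|h\|_{\dot H^s}^2\asymp\sum_j 2^{2js}\|\dot\Delta_j h\|_{L^2}^2$. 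I would then split the product by the homogeneous Bony decomposition $fg=T_fg+T_gf+R(f,g)$, with $T_fg=\sum_j\dot S_{j-1}f\,\dot\Delta_j g$ and $R(f,g)=\sum_{|j-k|\le1}\dot\Delta_j f\,\dot\Delta_k g$, and estimate the three pieces in $\dot H^{a+b-1}$ separately.

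For the paraproduct $T_fg$, each summand $\dot S_{j-1}f\,\dot\Delta_j g$ is spectrally localised in the annulus $\{|\xi|\asymp 2^j\}$, so the blocks are almost orthogonal and it suffices to control $\sum_j 2^{2j(a+b-1)}\|\dot S_{j-1}f\,\dot\Delta_j g\|_{L^2}^2$. Writing $\|\dot S_{j-1}f\,\dot\Delta_j g\|_{L^2}\le\|\dot S_{j-1}f\|_{L^\infty}\|\dot\Delta_j g\|_{L^2}$ and using Bernstein's inequality, $\|\dot S_{j-1}f\|_{L^\infty}\lesssim\sum_{k\le j-2}2^{k}\|\dot\Delta_k f\|_{L^2}=\sum_{k\le j-2}2^{k(1-a)}\big(2^{ka}\|\dot\Delta_k f\|_{L^2}\big)$; here the condition $a<1$ makes the kernel $2^{k(1-a)}\mathbf 1_{k\le0}$ summable, so Young's inequality for sequences gives $\|\dot S_{j-1}f\|_{L^\infty}\lesssim 2^{j(1-a)}e_j$ with $(e_j)\in\ell^2$, $\|e\|_{\ell^2}\lesssim\|f\|_{\dot H^a}$. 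Plugging this in, the powers of $2^j$ cancel exactly (since $2(a+b-1)+2(1-a)-2b=0$) and one is left with $\sum_j e_j^2 g_j^2\le\|e\|_{\ell^2}^2\|g\|_{\ell^2}^2\lesssim\|f\|_{\dot H^a}^2\|g\|_{\dot H^b}^2$, where $g_j=2^{jb}\|\dot\Delta_j g\|_{L^2}$. The term $T_gf$ is handled identically, now using $b<1$.

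The main obstacle is the resonant remainder $R(f,g)$, because the target smoothness $a+b-1$ may be nonpositive while the summands $\dot\Delta_j f\,\tilde{\dot\Delta}_j g$ are spectrally supported in \emph{balls} $\{|\xi|\lesssim 2^j\}$ rather than annuli; the naive ball-summation lemma applies only for positive exponents, and a crude estimate loses a factor that diverges once $a+b-1<0$. The remedy is to estimate the low-frequency projections directly: for fixed $m$ only the terms with $2^j\gtrsim 2^m$ contribute to $\dot\Delta_m R(f,g)$, and applying Bernstein's $L^1\!\to\!L^2$ inequality \emph{at the coarse scale} $m$ gives $\|\dot\Delta_m(\dot\Delta_j f\,\tilde{\dot\Delta}_j g)\|_{L^2}\lesssim 2^{m}\|\dot\Delta_j f\|_{L^2}\|\tilde{\dot\Delta}_j g\|_{L^2}=2^{m}2^{-j(a+b)}f_j\tilde g_j$, with the Bernstein gain $2^{md/2}=2^m$ attached to $m$ rather than to $j$. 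Summing over $j\gtrsim m$ and using $a+b>0$ to make the tail $\sum_{j\gtrsim m}2^{-j(a+b)}f_j\tilde g_j$ converge (Young again), one gets $\|\dot\Delta_m R\|_{L^2}\lesssim 2^{-m(a+b-1)}H_m$ with $(H_m)\in\ell^2$, $\|H\|_{\ell^2}\lesssim\|f\|_{\dot H^a}\|g\|_{\dot H^b}$; the annulus characterisation of $\dot H^{a+b-1}$ then yields $\|R(f,g)\|_{\dot H^{a+b-1}}\lesssim\|f\|_{\dot H^a}\|g\|_{\dot H^b}$. Combining the three bounds finishes the proof.

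Alternatively, a self-contained argument avoiding Littlewood--Paley is possible by recasting the estimate on the Fourier side as a bilinear convolution bound for $\int|\xi|^{a+b-1}|\xi-\eta|^{-a}|\eta|^{-b}F(\xi-\eta)G(\eta)\,\mathrm d\eta$, splitting the $\eta$-integral into $\{|\eta|\le|\xi|/2\}$, $\{|\xi-\eta|\le|\xi|/2\}$ and the complementary region; this mirrors the paraproduct decomposition but forces one to track the signs of $a$ and $b$ case by case, which is why I would prefer the Littlewood--Paley route.
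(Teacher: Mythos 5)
The paper does not actually prove this lemma: it is stated as ``classical'' with no argument given, being exactly the sharp homogeneous product law $\dot H^{a}\cdot\dot H^{b}\hookrightarrow\dot H^{a+b-d/2}$ (valid for $a,b<d/2$, $a+b>0$) specialised to $d=2$, i.e.\ \cite[Corollary 2.55]{Bahouri2011FourierAA}. So there is no internal proof to compare against; what matters is whether your reconstruction of the classical argument is sound, and it is. Your preliminary observation is right: $a,b<1$ together with $a+b>0$ already force $a,b>-1$, and all three exponents $a$, $b$, $a+b-1$ lie in $(-1,1)$. The two paraproduct estimates are correct and use precisely the hypotheses $a<1$ (resp.\ $b<1$), with the exponent bookkeeping $2(a+b-1)+2(1-a)-2b=0$ checking out. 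Crucially, you isolate the one genuinely delicate point, the remainder $R(f,g)$ when the target regularity $a+b-1$ is nonpositive, and resolve it correctly: applying $\dot\Delta_m$ first and invoking the $L^1\!\to\!L^2$ Bernstein inequality at the \emph{output} scale $m$ (so the gain $2^{md/2}=2^m$ attaches to $m$, not $j$), then summing the tail $j\gtrsim m$ via $a+b>0$, is equivalent to the textbook route through the ball criterion for $\dot B^{a+b}_{1,1}$ and the embedding into $\dot B^{a+b-1}_{2,1}$. One cosmetic remark: the dyadic characterisation $\Vert h\Vert_{\dot H^s}^2\asymp\sum_j 2^{2js}\Vert\dot\Delta_j h\Vert_{L^2}^2$ holds for every $s\in\mathbb{R}$ on the relevant subspace of tempered distributions; the restriction $|s|<d/2$ is only needed for $\dot H^s$ to be a complete function space, and plays no role in the estimates themselves.
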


\begin{proof}[Proof of Theorem \ref{unique}]
	As we know from Remark \ref{condition-unique}, conditions here are stronger than those of Theorem \ref{exist}, so there exists a solution which satisfies
	\eqref{path-bd}--\eqref{exist-bound}. The bound \eqref{path-bd2} follows in a similar manner as \textbf{Step 2} in the proof of Theorem \ref{exist}.
	
	Now let us turn to prove the uniqueness. Let $\theta^1$ and $\theta^2$ be two weak solutions to \eqref{mSQG} on the same filtered probability space $(\Omega,\mathcal{F},(\mathcal{F}_t)_t,\P)$ and with respect to the same sequence $(B^k)_k$ of independent Brownian motions satisfying $\theta^1,\theta^2\in L^\infty(\Omega\times [0,T];L^p\cap L^1)$. The difference $\theta:=\theta^1-\theta^2$ satisfies the following equality in $H^{-4}$:
	\begin{equation*}
		\mathrm{d}\theta+\big[(K_\beta*\theta^1)\cdot \nabla\theta+(K_\beta\ast \theta)\cdot \nabla\theta^2 \big]\, \mathrm{d}t+\sum_{k}\sigma_k\cdot \nabla\theta \, \mathrm{d}B^k=\frac{\pi}{4\alpha}\Delta \theta \, \mathrm{d}t.
	\end{equation*}
	Now applying It\^o formula to $\langle \theta,G_\beta^\delta*\theta \rangle$ and similarly to the computations in Lemma \ref{Ito}, we obtain
	\begin{equation}\label{u-Ito}
		\begin{split}
			\mathrm{d}\langle \theta,G_\beta^\delta\ast \theta \rangle
			&=2\langle \nabla G_\beta^\delta\ast \theta,  (K_\beta\ast \theta^1)\theta\rangle\, \mathrm{d}t +2\langle \nabla G_\beta^\delta\ast \theta,  (K_\beta\ast \theta)\theta^2\rangle\, \mathrm{d}t\\
			&\quad +2\sum_{k} \langle \nabla G_\beta^\delta\ast \theta,\sigma_k\theta \rangle\, \mathrm{d}B^k  \\
			&\quad+\iint\limits_{\mathbb{R}^2\times \mathbb{R}^2}\mathrm{tr}\big[(Q(0)-Q(x-y)) D^2G_\beta^\delta(x-y)\big] \theta(x)\theta(y)\, \mathrm{d}x\mathrm{d}y\mathrm{d}t   \\
			&=:(2I_1+2I_2)\, \mathrm{d}t+2\sum_{k}M_k\, \mathrm{d}B^k+J\,\mathrm{d}t.
		\end{split}
	\end{equation}
	As in the proof of Lemma \ref{Ito}, we know the It\^o integral is a true martingale with zero expectation.
	
	Concerning the term $I_1$, the idea is to control $\theta$ by its $H^{-\frac{\beta}{2}+1-\alpha}$ norm and $\theta^1$ by its $L^p$ norm. We fix $\epsilon>0$ such that $\alpha+\epsilon<\frac\beta2 -\frac1{p\wedge 2}$. We exploit Lemma \ref{harmonic} and get
	\begin{equation*}
		\begin{split}
			\vert I_1 \vert
			&=\vert\, \langle \nabla G_\beta^\delta\ast \theta,  (K_\beta\ast \theta^1)\theta\rangle \, \vert \\
			&\leq \Vert \theta \Vert_{\dot{H}^{-\frac{\beta}{2}+1-\alpha-\epsilon}} \Vert (K_\beta\ast \theta^1)\nabla G_\beta^\delta\ast \theta\Vert_{\dot{H}^{\frac{\beta}{2}-1+\alpha+\epsilon}}  \\
			&\lesssim \Vert \theta \Vert_{\dot{H}^{-\frac{\beta}{2}+1-\alpha-\epsilon}} \Vert K_\beta\ast \theta^1\Vert_{\dot{H}^{2(\alpha+\epsilon)}} \Vert{\nabla G_\beta^\delta\ast \theta}\Vert_{\dot{H}^{\frac{\beta}{2}-\alpha-\epsilon}}  \\
			&\lesssim \Vert \theta \Vert_{\dot{H}^{-\frac{\beta}{2}+1-\alpha-\epsilon}}^2 \Vert \theta^1\Vert_{\dot{H}^{2(\alpha+\epsilon)-\beta+1}}.
		\end{split}
	\end{equation*}
	Taking $\frac{1}{\tilde{p}}=\frac{\beta}{2}-\alpha-\epsilon$ (so $1<\tilde{p}<p\wedge 2$), thanks to the Sobolev embedding  $L^{\tilde{p}}\hookrightarrow \dot{H}^{2(\alpha+\epsilon)-\beta+1}$, we have
	\begin{equation*}
		\vert I_1\vert\lesssim \Vert \theta^1\Vert_{L^{\tilde{p}}}\Vert \theta \Vert_{\dot{H}^{-\frac{\beta}{2}+1-\alpha-\epsilon}}^2.
	\end{equation*}
	By assumption, the $L^p\cap L^1$ norm of $\theta^1$ is uniformly bounded on $\Omega\times [0,T]$, hence there exists a constant $C_1$, such that $(\P\otimes \mathrm{d}t)$-a.s.,
	\begin{equation*}
		\Vert \theta_t^1\Vert_{L^{\tilde{p}}}\leq \Vert \theta_t^1\Vert_{L^{p}}+\Vert \theta_t^1\Vert_{L^{1}}\leq C_1.
	\end{equation*}
	By interpolation, one can check, for $\bar{\epsilon}>0$ to be determined later,
	\begin{equation}\label{interpolation}
		\begin{split}
			\Vert \theta \Vert_{\dot{H}^{-\frac{\beta}{2}+1-\alpha-\epsilon}}^2
			&\leq \Vert \theta\Vert_{\dot{H}^{-\frac{\beta}{2}}}^2+\Vert \theta \Vert_{H^{-\frac{\beta}{2}+1-\alpha-\epsilon}}^2 \\
			&\lesssim \Vert \theta\Vert_{\dot{H}^{-\frac{\beta}{2}}}^2+\bar{\epsilon}\Vert \theta\Vert_{H^{-\frac{\beta}{2}+1-\alpha}}^2+C_{\bar{\epsilon}}\Vert \theta\Vert_{H^{-\frac{\beta}{2}}}^2.
		\end{split}
	\end{equation}
	Taking expectation and integrating in time, we get
	\begin{equation}\label{I1}
		\int_{0}^{t}\mathbb{E}[\vert I_1\vert]\, \mathrm{d}r
		\lesssim \bar{\epsilon}\int_{0}^{t}\mathbb{E}\left[\Vert\theta_r\Vert_{H^{-\frac{\beta}{2}+1-\alpha}}^2\right]\mathrm{d}r
		+(C_{\bar{\epsilon}}+1) \int_{0}^{t}\mathbb{E}\left[\Vert\theta_r\Vert_{\dot{H}^{-\frac{\beta}{2}}}^2\right]\mathrm{d}r.
	\end{equation}
	
	Now we turn to estimate the term $I_2$, note that
	\begin{equation*}
		\langle \nabla G_\beta\ast \theta,(K_\beta\ast \theta)\theta^2 \rangle=\langle \nabla (G_\beta\ast \theta), \nabla^\perp (G_{\beta}\ast \theta)\,\theta^2 \rangle=0,
	\end{equation*}
	so it is enough to control the remainder term with $G_\beta-G_\beta^\delta$. We have, by Lemma \ref{harmonic},
	\begin{equation*}
		\begin{split}
			\vert I_2\vert
			&=\vert\langle \nabla (G_\beta^\delta-G_\beta)\ast \theta,(K_\beta*\theta)\theta^2 \rangle \vert  \\
			&\leq \Vert \theta^2 \Vert_{\dot{H}^{-\frac{\beta}{2}+1-\alpha-\epsilon}} 	
			\Vert (K_\beta*\theta)\nabla (G_\beta^\delta-G_\beta)\ast \theta\Vert_{\dot{H}^{\frac{\beta}{2}-1 +\alpha+\epsilon}} \\
			&\lesssim \Vert \theta^2 \Vert_{\dot{H}^{-\frac{\beta}{2}+1-\alpha-\epsilon}} 	
			\Vert K_\beta\ast \theta\Vert_{\dot{H}^{2(\alpha+\epsilon)}}
			\Vert\nabla (G_\beta^\delta-G_\beta)\ast \theta\Vert_{\dot{H}^{\frac{\beta}{2}-\alpha-\epsilon}} \\
			&\leq \Vert \theta^2 \Vert_{\dot{H}^{-\frac{\beta}{2}+1-\alpha-\epsilon}}
			\Vert\theta\Vert_{\dot{H}^{2(\alpha+\epsilon)-\beta+1}}
			\Vert\nabla (G_\beta^\delta-G_\beta)\ast \theta\Vert_{\dot{H}^{\frac{\beta}{2}-\alpha-\epsilon}}.
		\end{split}
	\end{equation*}
	Still by the Sobolev embedding  $L^{\tilde{p}}\hookrightarrow \dot{H}^{2(\alpha+\epsilon)-\beta+1}$ and the uniform bound of $L^1\cap L^p$ norm of $\theta^1$ and $\theta^2$, we get
	\begin{equation*}
		\begin{split}
			\vert I_2\vert
			&\lesssim \big(\Vert \theta^1\Vert_{L^{\tilde{p}}}+\Vert \theta^2\Vert_{L^{\tilde{p}}}\big)\Vert \theta^2 \Vert_{\dot{H}^{-\frac{\beta}{2}+1-\alpha-\epsilon}} \Vert\nabla (G_\beta^\delta-G_\beta)\ast \theta\Vert_{\dot{H}^{\frac{\beta}{2}-\alpha-\epsilon}}\\
			&\lesssim (C_1+C_2)\Vert \theta^2 \Vert_{\dot{H}^{-\frac{\beta}{2}+1-\alpha-\epsilon}} \Vert\nabla (G_\beta^\delta-G_\beta)\ast \theta\Vert_{\dot{H}^{\frac{\beta}{2}-\alpha-\epsilon}}.
		\end{split}
	\end{equation*}
	Taking expectation and integrating in time, we get
	\begin{align*}
		\int_{0}^{T}\mathbb{E}[\vert I_2\vert]\, \mathrm{d}r
		\lesssim \bigg(\int_{0}^{T}\mathbb{E}\Big[\Vert \theta_r^2\Vert_{\dot{H}^{-\frac{\beta}{2} +1-\alpha-\epsilon}}^2 \Big] \mathrm{d}r\bigg)^{1/2}
		\bigg(\int_{0}^{T}\mathbb{E}\Big[\Vert\nabla (G_\beta^\delta-G_\beta)\ast \theta\Vert_{\dot{H}^{\frac{\beta}{2} -\alpha-\epsilon}}^2\Big]\mathrm{d}r\bigg)^{1/2}.
	\end{align*}
	By \eqref{G-Fourier}, we can write the last term as
	\begin{align*}
		&\int_{0}^{T}\mathbb{E}\Big[\Vert\nabla(G_\beta^\delta-G_\beta)*\theta\Vert_{\dot{H}^{\frac{\beta}{2}-\alpha-\epsilon}}^2\Big]\mathrm{d}r \\
		&=\int \vert \xi\vert^{-\beta+2-2(\alpha+\epsilon)}
		\big(1-e^{-(2\pi\vert \xi \vert)^\beta \delta}
		+e^{-(2\pi\vert \xi \vert)^\beta/\delta}\big)^2
		\int_{0}^{T}\mathbb{E}\big[\vert\widehat{\theta}_r(\xi)\vert^2\big] \,\mathrm{d}r\mathrm{d}\xi.
	\end{align*}
	Notice that by \eqref{interpolation},  the integral
	\begin{equation*}
		\int \int_{0}^{T}\vert \xi\vert^{-\beta+2-2(\alpha+\epsilon)}
		\mathbb{E}\big[\vert\widehat{\theta}_r(\xi)\vert^2\big]\, \mathrm{d}r\mathrm{d}\xi=\int_{0}^{T}\mathbb{E}\left[\Vert \theta_r^2\Vert_{\dot{H}^{-\frac{\beta}{2}+1-\alpha-\epsilon}}^2\right]\mathrm{d}r
	\end{equation*}
	is finite, hence by dominated convergence theorem,
	\begin{equation}\label{I2}
		\int_{0}^{T}\mathbb{E}[\vert I_2\vert]\, \mathrm{d}r =o(1)\quad \text{as } \delta\rightarrow0.
	\end{equation}
	
	As in Lemma \ref{extrareg}, the term $J$ in \eqref{u-Ito} provides a control of the $H^{-\frac{\beta}{2}+1-\alpha}$ norm of the difference $\theta$. We divide the quantity in $J$ as follows:
	\begin{equation*}
		\begin{split}
			\mathrm{tr}\big[(Q(0)-Q(x))D^2G_\beta^\delta(x)\big]
			&=\mathrm{tr}\big[(Q(0)-Q(x))D^2G_\beta(x)\big]\varphi(x)  \\
			&\quad+\mathrm{tr}\big[(Q(0)-Q(x))D^2(G_\beta^\delta-G_\beta)(x)\big]\varphi(x) \\
			&\quad+\mathrm{tr}\big[(Q(0)-Q(x)) D^2G_\beta^\delta(x)\big](1-\varphi(x)) \\
			&=:A(x)+R_1(x)+R_3(x),
		\end{split}
	\end{equation*}
	where $\varphi$ is a radial smooth function satisfying $0\leq \varphi \leq1 $ everywhere, $\varphi(x)=1$ for $\vert x \vert\leq1$ and $\varphi(x)=0$ for $\vert x \vert\geq2$. Now we proceed as in Lemmas \ref{A-bound}--\ref{R3-bound} and obtain
	\begin{align*}
		J&\leq \int_{\mathbb{R}^2}(\hat{A}(\xi)+\widehat{R_3}(\xi))\, \vert \widehat{\theta}_t(\xi) \vert^2\, \mathrm{d}\xi
		+\iint\limits_{\mathbb{R}^2\times \mathbb{R}^2}
		\vert R_1(x-y) \vert\, \vert \theta_t(x) \vert\, \vert \theta_t(y) \vert\, \mathrm{d}x\mathrm{d}y \\
		&=-c\int \langle \xi \rangle^{-(2\alpha+\beta-2)}
		\vert \widehat{\theta}_t(\xi) \vert^2\, \mathrm{d}\xi
		+C\int \vert \xi \vert^{-\beta} \vert \widehat{\theta_t}(\xi) \vert^2\, \mathrm{d}\xi \\
		&\quad+C\iint\big(\delta+\vert x-y \vert^{2\alpha+\beta-4}{\bf1}_{\{\vert x-y \vert \leq \delta^{1/(4+\beta)}\}}\big)\varphi(x-y)\,\vert\theta_t(x)\vert\, \vert\theta_t(y)\vert\, \mathrm{d}x\mathrm{d}y.
	\end{align*}
	By H\"older's and Young's inequalities for the last term, we get, taking $\frac{1}{r}=2-\frac{2}{p\wedge2}$,
	\begin{align*}
		J&\leq-c\Vert \theta_t \Vert_{H^{-\frac{\beta}{2}+1-\alpha}}^2
		+C\Vert \theta_t \Vert_{\dot{H}^{-\frac{\beta}{2}}}^2 +C\delta\Vert\theta_t\Vert_{L^1}^2  
		+C\bigg(  \int \vert x \vert^{(2\alpha+\beta-4)r}{\bf1}_{\{\vert x \vert \leq \delta^{1/(4+\beta)}\}}\mathrm{d}x\bigg)^{1/r} \Vert\theta_t\Vert_{L^{p\wedge2}}^2 \\
		&\leq -c\Vert \theta_t \Vert_{H^{-\frac{\beta}{2}+1-\alpha}}^2
		+C\Vert \theta_t \Vert_{\dot{H}^{-\frac{\beta}{2}}}^2 +C\delta\Vert\theta_t\Vert_{L^1}^2+o(1)\Vert\theta_t\Vert_{L^{p\wedge2}}^2,
	\end{align*}
	where we have used the assumption $\alpha>\frac{2}{p}-\frac{\beta}{2}$ to guarantee $(2\alpha+\beta-4)r>-2$. Taking expectation and integrating in time, we get
	\begin{equation}\label{J}
		\int_{0}^{t}\mathbb{E}[J]\, \mathrm{d}r\leq -c\int_{0}^{t}\mathbb{E}\left[\Vert\theta_r\Vert_{H^{-\frac{\beta}{2}+1-\alpha}}^2\right]\mathrm{d}r
		+C \int_{0}^{t}\mathbb{E}\left[\Vert\theta_r\Vert_{\dot{H}^{-\frac{\beta}{2}}}^2\right]\mathrm{d}r+o(1),
	\end{equation}
	where $o(1)$ is uniform for $t\in [0,T]$.
	
	Now integrating over $\Omega\times [0,t]$ in \eqref{u-Ito}, using the bounds \eqref{I1}--\eqref{J} where $\bar{\epsilon}>0$ in \eqref{I1} is chosen small enough and letting $\delta\rightarrow 0$, by $\eqref{G-Fourier-error}$ and the embedding $L^1\cap L^p\subset{\dot{H}^{-\beta/2}}$ in Remark \ref{condition-unique}, we arrive at
	
	\begin{equation*}
		\mathbb{E}\left[\Vert\theta_t\Vert_{\dot{H}^{-\frac{\beta}{2}}}^2\right]\leq \Vert\theta_0\Vert_{\dot{H}^{-\frac{\beta}{2}}}^2
		-\frac{c}{2}\int_{0}^{t}\mathbb{E}\left[\Vert\theta_r\Vert_{H^{-\frac{\beta}{2}+1-\alpha}}^2\right]\mathrm{d}r
		+C \int_{0}^{t}\mathbb{E}\left[\Vert\theta_r\Vert_{\dot{H}^{-\frac{\beta}{2}}}^2\right]\mathrm{d}r.
	\end{equation*}
	Then by Gr\"onwall inequality and $\theta_0=\theta_0^1-\theta_0^2=0$, we get
	\begin{equation*}
		\sup\limits_{t\in [0,T]} \mathbb{E}\left[\Vert \theta_t \Vert_{\dot{H}^{-\frac{\beta}{2}}}^2\right]
		+\frac{c}{2}\int_{0}^{T}\mathbb{E}\left[\Vert \theta_t \Vert_{H^{-\frac{\beta}{2}+1-\alpha}}^2\right]\, \mathrm{d}t\leq \Vert \theta_0 \Vert_{\dot{H}^{-\frac{\beta}{2}}}^2e^{CT}=0,
	\end{equation*}
	which completes the proof.
\end{proof}
		
\appendix

\section{Proof of Lemma \ref{errer-kernel} }\label{error-G}

In this section, we give a proof of Lemma \ref{errer-kernel}, more precisely, a generalized version in $\mathbb{R}^d$. Before we start, we need to do some preparation work about the fractional heat kernel. Throughout this section, we suppose $p(t,x)$ is the fractional heat kernel on $\mathbb{R}^d$ for some integer $d \geq 2$, that is to say,
\begin{equation*}
	\left\{
	\begin{lgathered}
		\partial_t p + (-\Delta)^{\frac{\beta}{2}}p=0
		\quad\text{in }(0,\infty)\times \mathbb{R}^d,\\
		p(0,\cdot)=\delta_0.
	\end{lgathered}
	\right.
\end{equation*}
For $(t,x)\in (0,\infty)\times \mathbb{R}^d$, define $q(t,x)=t(t^{2/\beta}+\vert x \vert^2)^{-\frac{d+\beta}{2}}$.
The following bound of the kernel $p(t,x)$ 
is well known, see for example \cite{Bogdan2007EstimatesOH, Kwasnicki+2017+7+51}.
\begin{lem}
	\label{0pq}
	There exists a constant $C=C(d,\beta)$ such that for all $(t,x)\in (0,\infty)\times \mathbb{R}^d$,
	\begin{equation}
		C^{-1}q(t,x)
		\leq p(t,x) \leq
		Cq(t,x).
	\end{equation}
\end{lem}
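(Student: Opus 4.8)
The plan is to remove the time variable via the exact self-similarity of the fractional heat kernel, reducing the claim to a single time-independent two-sided bound on the profile $P(x):=p(1,x)$, and then to prove that bound, the genuinely hard part being the power-law tail. For the reduction, recall that the Fourier transform of $p$ is $\hat p(t,\xi)=e^{-(2\pi|\xi|)^{\beta}t}$ (the computation in general $d$ is identical to the one in Section~\ref{sec-prelim}). Substituting $\xi=t^{-1/\beta}\zeta$ in the inversion formula gives the self-similar form
\[
p(t,x)=t^{-d/\beta}\,P\big(t^{-1/\beta}x\big),\qquad P(y)=\int_{\mathbb{R}^d}e^{2\pi i\,y\cdot\zeta}\,e^{-(2\pi|\zeta|)^{\beta}}\,\mathrm{d}\zeta .
\]
The same substitution $y=t^{-1/\beta}x$ yields $t^{2/\beta}+|x|^2=t^{2/\beta}(1+|y|^2)$ and hence $q(t,x)=t^{-d/\beta}\langle y\rangle^{-(d+\beta)}$. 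Thus the asserted estimate is \emph{equivalent} to the single time-independent bound $P(y)\asymp\langle y\rangle^{-(d+\beta)}$ on $\mathbb{R}^d$, which is what I would prove.

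On the bounded region the bound is immediate. Since $e^{-(2\pi|\cdot|)^{\beta}}\in L^1$, the profile $P$ is continuous and bounded, and it is \emph{strictly positive} (positivity follows from the subordination formula below together with the positivity of the Gaussian kernel). Consequently both $P$ and $\langle\cdot\rangle^{-(d+\beta)}$ lie between two positive constants on $\{|y|\le 1\}$, settling the comparison there.

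It then remains to establish the tail $P(y)\asymp|y|^{-(d+\beta)}$ for $|y|\ge 1$. As $\beta/2\in(1/2,1)$ for $\beta\in(1,2)$, the semigroup $e^{-t(-\Delta)^{\beta/2}}$ is subordinate to the heat semigroup: writing $\eta_t$ for the density of the $(\beta/2)$-stable subordinator, normalised by $\int_0^\infty e^{-\lambda s}\eta_t(s)\,\mathrm{d}s=e^{-t\lambda^{\beta/2}}$, one checks on the Fourier side that
\[
p(t,x)=\int_0^\infty (4\pi s)^{-d/2}\,e^{-|x|^2/(4s)}\,\eta_t(s)\,\mathrm{d}s .
\]
The large-time asymptotics $\eta_1(s)\asymp s^{-1-\beta/2}$ are classical. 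For $|y|$ large the Gaussian factor concentrates the integral on $s\gtrsim|y|^2$; splitting the $s$-integral at $s\sim|y|^2$ and inserting the two-sided bound on $\eta_1$ produces the upper bound $P(y)\lesssim|y|^{-(d+\beta)}$, while restricting to the band $s\in[|y|^2,2|y|^2]$, where the Gaussian and $\eta_1$ are simultaneously bounded below, yields the matching lower bound.

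The genuine difficulty is this tail estimate, and within it the \emph{lower} bound: the upper bound tolerates crude estimates, but $P(y)\gtrsim|y|^{-(d+\beta)}$ requires the sharp large-$s$ asymptotics of the subordinator density (equivalently, controlling the cancellation in the oscillatory integral defining $P$, whose decay is dictated by the non-smoothness of $|\zeta|^{\beta}$ at the origin, the Fourier transform of that singular part being comparable to $|y|^{-(d+\beta)}$). These are exactly the classical stable heat-kernel estimates recorded in \cite{Bogdan2007EstimatesOH, Kwasnicki+2017+7+51}, on which I would rely for the precise subordinator bounds; the remaining splitting and integration are routine.
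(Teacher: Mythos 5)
Your proposal is correct, but it takes a genuinely different route from the paper for a simple reason: the paper offers no proof of Lemma \ref{0pq} at all — the two-sided bound is quoted as a well-known estimate for the stable heat kernel, with \cite{Bogdan2007EstimatesOH, Kwasnicki+2017+7+51} as references. What you propose is an actual proof, and it is assembled from the same ingredients the paper deploys later in the appendix for the \emph{derivative} estimates, namely the subordination identity \eqref{Bochner} and the bound $\rho_1(s)\le c\,s^{-1-\beta/2}$ of \cite[Lemma 5]{Bogdan2007EstimatesOH}. Your reduction is exact: $p(t,x)=t^{-d/\beta}P\big(t^{-1/\beta}x\big)$ and $q(t,x)=t^{-d/\beta}\langle y\rangle^{-(d+\beta)}$ with $y=t^{-1/\beta}x$, so the lemma is equivalent to $P(y)\asymp\langle y\rangle^{-(d+\beta)}$; the near-origin comparison by continuity and strict positivity of $P$ is fine; and the tail computation closes, since the substitution $s=|y|^2u$ turns the global upper bound on $\eta_1$ into $P(y)\lesssim|y|^{-(d+\beta)}$, while the band $s\in\big[|y|^2,2|y|^2\big]$ yields the matching lower bound. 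Two points of care. First, your phrase ``the two-sided bound on $\eta_1$'' must be restricted: $\eta_1(s)\asymp s^{-1-\beta/2}$ holds only for $s$ bounded away from $0$ (near $s=0$ the stable subordinator density vanishes faster than any power, so the lower bound fails there); your upper-bound step should invoke only the one-sided bound $\eta_1(s)\lesssim s^{-1-\beta/2}$, valid for all $s>0$, which is all it needs, and your lower-bound step needs $|y|$ large enough that $\big[|y|^2,2|y|^2\big]$ lies in the region where the lower asymptotics hold, the complementary compact set being absorbed into the continuity argument. Second, to avoid circularity, the input imported from \cite{Bogdan2007EstimatesOH, Kwasnicki+2017+7+51} must be the subordinator density asymptotics, not the heat-kernel comparison itself. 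Granting these, your argument is a correct, essentially self-contained replacement for the paper's citation; its only extra cost relative to the paper is the lower asymptotics of $\eta_1$ at infinity, an input the paper never requires, since everywhere else it uses only the upper bound on $\rho_t$.
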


\begin{rmk}\label{equal-q}
	It is easy to check that
	\begin{equation*}
		\dfrac{t}{(t^{\frac{2}{\beta}}+\vert x \vert^2)^{\frac{d+\beta}{2}}}
		\asymp
		\dfrac{t}{\vert x \vert^{d+\beta}}\wedge t^{-\frac{d}\beta}.
	\end{equation*}
	For simplicity, both sides of the above expression are represented by $q(t,x)$.
\end{rmk}

From a probabilistic point of view, the fractional Laplacian $-(-\Delta)^{\frac{\beta}{2}}$ is the infinitesimal generator of the $\beta$-stable process, which is a pure jump Markov process. That is to say, $p(t,x)$ is the transition probability of a $\beta$-stable process $Z=(Z_t)_{t\geq 0}$. Then by the Bochner subordination formula (see e.g.
\cite[Proposition 8.6]{2004Bertoinbook}), we have
\begin{equation*}
	Z=\sqrt{2}B_S\quad \text{in law},
\end{equation*}
where $B$ and $S$ are independent, $B=(B_t)_{t\geq 0}$ is a standand $d$ dimensional Brownian motion and $S=(S_t)_{t\geq 0}$ is a subordinator with Laplace exponent $\lambda^{\frac{\beta}{2}}$, namely, $\mathbb{E}[e^{-\lambda S_t}]=e^{-\lambda^{\frac{\beta}{2}}t}$ for all $\lambda,t\geq 0$.
Hence by conditioning we get
\begin{equation}
	p(t,x)=\int_{0}^{\infty}h(s,x)\rho_t(s)\, \mathrm{d}s,\label{Bochner}
\end{equation}
where $h(s,x)=(4\pi s)^{-\frac{d}{2}}e^{-\frac{\vert x \vert^2}{4s}}$ is the heat kernel in $\mathbb{R}^d$, $\rho_t$ is the probability density function of $S_t$. Accroding to \cite[Lemma 5]{Bogdan2007EstimatesOH}, there exists a constant $c$ such that $\rho_1(s)\leq c\, s^{-1-\frac{\beta}{2}}$
for all $s>0$, so by scaling property $\rho_t(s)=t^{-\frac{2}{\beta}}\rho_1(t^{-\frac{2}{\beta}}s)$, we have
\[\rho_t(s)\leq c\,t \, s^{-1-\frac{\beta}{2}},\quad t>0.\]
Combining the above estimate and \eqref{Bochner}, the dominated convergence theorem implies that
\begin{equation}
	\label{1p}
	\begin{split}
		\nabla_x p(t,x)&=\int_{0}^{\infty}\nabla_x h(s,x)\rho_t(s)\,\mathrm{d}s
		=-\dfrac{x}{2}\int_{0}^{\infty}
		\dfrac{h(s,x)}{s}\rho_t(s)\,\mathrm{d}s  \\
		&=-2\pi p_{(d+2)}(t,x)\, x,
	\end{split}    	
\end{equation}
where $p_{(d+2)}$ is the heat kernel in dimension $d+2$. Strictly speaking, $p_{(d+2)}(t,x)$ should be $p_{(d+2)}(t,\bar{x})$, where $\bar{x}\in\mathbb{R}^{d+2}$ satisfies $\vert \bar{x} \vert=\vert x \vert$.
Similarly, we obtain that
\begin{equation}\label{2p}
	D_x^2p(t,x)=-2\pi p_{(d+2)}(t,x)\,I_d+4\pi^2p_{(d+4)}(t,x)\, x\otimes x,
\end{equation}
where $p_{(d+4)}(t,x)$ is understood in a similar way and $I_d$ is $d\times d$ unit matrix.
A simple calculation shows that
\begin{equation}\label{1q2q}
	\begin{split}
		\nabla_xq(t,x)&=-(d+\beta)q_{(d+2)}(t,x)\, x,\\
		D_x^2q(t,x)&= -(d+\beta)q_{(d+2)}(t,x)\, I_d+(d+\beta)(d+2+\beta)q_{(d+4)}(t,x)\, x\otimes x.
	\end{split}
\end{equation}
Combining Lemma \ref{0pq} and \eqref{1p} -- \eqref{1q2q}, we have the following estimates.

\begin{lem}\label{012pq}
	Let $p,q$ be defined as above, we have that for all $(t,x)\in (0,\infty)\times \mathbb{R}^d$,
	\[p(t,x)\asymp q(t,x),\quad \nabla_xp(t,x)\asymp \nabla_xq(t,x),\quad D_x^2p(t,x)\asymp D_x^2q(t,x),\]
	where the constants behind $\asymp$ are only dependent on $\beta$ and $d$.
\end{lem}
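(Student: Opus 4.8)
The plan is to reduce each of the three comparisons to Lemma~\ref{0pq}, invoked not only in dimension $d$ but also in the shifted dimensions $d+2$ and $d+4$. The key structural observation is already recorded in \eqref{1p}, \eqref{2p} and \eqref{1q2q}: the first and second $x$-derivatives of $p$ are expressed through the higher-dimensional fractional heat kernels $p_{(d+2)}$ and $p_{(d+4)}$, while the derivatives of the explicit profile $q$ are expressed through $q_{(d+2)}$ and $q_{(d+4)}$, with the \emph{same} matrix building blocks $I_d$ and $x\otimes x$ and only different numerical constants. Thus Lemma~\ref{0pq} applied in dimension $d+2$ gives $p_{(d+2)}\asymp q_{(d+2)}$, and applied in dimension $d+4$ gives $p_{(d+4)}\asymp q_{(d+4)}$, the implicit constants depending only on $\beta$ and $d$; these two facts, together with the already available $p\asymp q$, are all the input I would use.

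For the gradient, I would read off from \eqref{1p} and \eqref{1q2q} that $\nabla_x p=-2\pi\,p_{(d+2)}\,x$ and $\nabla_x q=-(d+\beta)\,q_{(d+2)}\,x$ are both positive scalar multiples of the single vector $-x$. Since the two scalar prefactors are comparable by Lemma~\ref{0pq} in dimension $d+2$, the two gradients point in the same direction with comparable magnitudes, which is exactly $\nabla_x p\asymp \nabla_x q$.

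For the Hessian I would use \eqref{2p} and \eqref{1q2q} to write both $D_x^2 p$ and $D_x^2 q$ in the common form $-a\,I_d+b\,x\otimes x$ with positive coefficients: $a=2\pi p_{(d+2)},\ b=4\pi^2 p_{(d+4)}$ in the case of $p$, and $a=(d+\beta)q_{(d+2)},\ b=(d+\beta)(d+2+\beta)q_{(d+4)}$ in the case of $q$. By the two higher-dimensional comparisons the coefficients match up pairwise, $a_p\asymp a_q$ and $b_p\asymp b_q$. It then remains to pass from comparable coefficients to a comparison of the matrices themselves, in any fixed matrix norm (all of which are equivalent in finite dimension). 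Diagonalizing, the matrix $-a\,I_d+b\,x\otimes x$ has eigenvalue $-a$ on the hyperplane orthogonal to $x$ and eigenvalue $-a+b|x|^2$ along $x$, so its norm is comparable to $a+b|x|^2\asymp q_{(d+2)}+q_{(d+4)}|x|^2$ for both $p$ and $q$, giving $D_x^2 p\asymp D_x^2 q$.

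The step I expect to require the most care is this last passage for the Hessian, because the two coefficients enter with opposite signs and a crude triangle-inequality argument only controls the norm from above. The point that must be checked is the elementary but essential scalar estimate $\max\{a,\,\bigl|b|x|^2-a\bigr|\}\asymp a+b|x|^2$ for $a,b\ge 0$: it shows that, despite the cancellation that can occur in the single radial eigenvalue $-a+b|x|^2$ (which indeed changes sign near $|x|^2\asymp t^{2/\beta}$), the eigenvalue $-a$ of multiplicity $d-1\ge1$ on the orthogonal complement prevents the matrix norm from collapsing. This is precisely the mechanism that makes the comparison two-sided, and the resulting size bound $|D_x^2 p|\asymp q_{(d+2)}+q_{(d+4)}|x|^2$ is what feeds into the pointwise kernel estimates of Lemma~\ref{errer-kernel} after integration in $t$.
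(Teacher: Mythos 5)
Your proof is correct and takes exactly the paper's route: the paper's entire argument for this lemma is the one-line observation that it follows by combining Lemma \ref{0pq} (applied in dimensions $d$, $d+2$ and $d+4$) with the structural formulas \eqref{1p}--\eqref{1q2q}, which is precisely what you do. Your eigenvalue analysis of the Hessian --- the elementary estimate $\max\{a,\bigl|b|x|^2-a\bigr|\}\asymp a+b|x|^2$ for $a,b\ge 0$, which uses $d\ge 2$ so that the transverse eigenvalue $-a$ is present and the sign-changing radial eigenvalue cannot destroy two-sidedness --- is the one nontrivial point the paper leaves implicit, and you justify it correctly.
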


\begin{rmk}\label{mpq}
	In fact, the same proof works for the general case $m\in \mathbb{N}$,
	\begin{equation*}
		D_x^mp(t,x)\asymp D_x^mq(t,x).
	\end{equation*}
\end{rmk}

Now we turn to prove Lemma \ref{errer-kernel}. Note that the relation \eqref{G} holds true in dimension $d$ and $G_\beta^\delta$ is defined as in \eqref{G-approx}.
We give a general version of Lemma \ref{errer-kernel} for all dimension $d$.

\begin{lem}
	For every nonnegative integer $m$, there exists a constant $C_m$ such that for all $x\in \mathbb{R}^d$,
	\begin{equation*}
		\vert D^mG_\beta^\delta(x) \vert
		\lesssim_m \dfrac{1}{\vert x \vert^{m+d-\beta}
		}.
	\end{equation*}
	Moreover, the following estimates hold:
	\begin{equation}\label{1Gd}
		\begin{split}
			\vert \nabla(G_\beta-G_\beta^\delta)(x) \vert
			&\lesssim_{\beta,d}\delta^{1/2}
			{\bf 1}_{\{\delta^{1/(\beta+d+1)}\leq \vert x \vert\leq\delta^{-1/\beta}\}}   \\
			&\quad+\vert x \vert^{\beta-d-1}{\bf 1}_{\{\vert x \vert\leq \delta^{1/(\beta+d+1)}\text{ or } \vert x \vert\geq \delta^{-1/\beta}\}},
		\end{split}
	\end{equation}
	\begin{equation}\label{2Gd}
		\vert D^2(G_\beta-G_\beta^\delta)(x) \vert
		\lesssim_{\beta,d}
		\delta+\vert x \vert^{\beta-d-2}{\bf1}_{\{\vert x \vert \leq \delta^{1/(d+2+\beta)}\}}.
	\end{equation}
	Note that all implicit constants are independent of $\delta$.
\end{lem}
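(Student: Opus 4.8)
The plan is to reduce every statement to an explicit computation with the profile $q(t,x)=t(t^{2/\beta}+|x|^2)^{-(d+\beta)/2}$ and its spatial derivatives, using the pointwise comparison $D_x^m p(t,x)\asymp D_x^m q(t,x)$ from Lemma~\ref{012pq} and Remark~\ref{mpq} together with the explicit formulas \eqref{1q2q}. The single mechanism behind all the bounds is the two-regime shape recorded in Remark~\ref{equal-q}: for each auxiliary dimension $d'$ one has $q_{(d')}(t,x)\asymp \frac{t}{|x|^{d'+\beta}}\wedge t^{-d'/\beta}$, and the two branches swap dominance exactly at the time $t=|x|^\beta$. Every time integral below will be evaluated by splitting the $t$-range at this crossover whenever it lies inside the range, and integrating the two resulting power laws.

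For the uniform bound I would start from $D^m G_\beta^\delta(x)=\int_\delta^{1/\delta}D_x^m p(t,x)\,\mathrm{d}t$ (see \eqref{G-approx}), pass the derivatives inside, bound the absolute value by $\int_0^\infty |D_x^m q(t,x)|\,\mathrm{d}t$, and note that iterating \eqref{1q2q} gives $|D_x^m q(t,x)|\lesssim \sum_{k=0}^{\lfloor m/2\rfloor}|x|^{m-2k}\,q_{(d+2m-2k)}(t,x)$. Since $\int_0^\infty q_{(d')}(t,x)\,\mathrm{d}t\asymp |x|^{\beta-d'}$ (split at $t=|x|^\beta$; both branches contribute $|x|^{\beta-d'}$, using $d'\ge d>\beta$ for convergence at $t=\infty$), each summand produces exactly $|x|^{\beta-d-m}$, which is the claimed estimate.

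For the two difference estimates the point is to exploit the cancellation in $G_\beta-G_\beta^\delta=\int_0^\delta p\,\mathrm{d}t+\int_{1/\delta}^\infty p\,\mathrm{d}t$ coming from \eqref{G} and \eqref{G-approx}, treating the small-time tail $\int_0^\delta$ and the large-time tail $\int_{1/\delta}^\infty$ separately. I would use $|\nabla_x q|\asymp |x|\,q_{(d+2)}$ for the gradient and $|D_x^2 q|\lesssim q_{(d+2)}+|x|^2 q_{(d+4)}$ for the Hessian, and then compute the four tails explicitly. Whether the crossover $t=|x|^\beta$ is contained in $(0,\delta)$ or in $(1/\delta,\infty)$ is controlled by comparing $|x|$ with the thresholds $\delta^{1/\beta}$ and $\delta^{-1/\beta}$ --- which is precisely why these values appear. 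When the crossover is excluded, only one branch survives and one gets bounds such as $\delta^2|x|^{-(d+1+\beta)}$ (small time) and $|x|\,\delta^{(d+2-\beta)/\beta}$ (large time) for the gradient; when it is included, the integral is dominated by its value at $t=|x|^\beta$ and reproduces the homogeneous powers $|x|^{\beta-d-1}$ (gradient) and $|x|^{\beta-d-2}$ (Hessian).

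The final step is purely bookkeeping: matching these bounds against the three regions in the statement. In the intermediate range both crossovers are excluded, and I would check $\delta^2|x|^{-(d+1+\beta)}\lesssim \delta$ (using $|x|\ge \delta^{1/(\beta+d+1)}$) and $|x|\,\delta^{(d+2-\beta)/\beta}\le \delta^{(d+1-\beta)/\beta}$ (using $|x|\le \delta^{-1/\beta}$), both of which are $\lesssim\delta^{1/2}$ because $d\ge 2$ and $\beta<2$ force $d+1-\beta\ge \beta/2$; in the extreme ranges the homogeneous powers dominate everything else. The threshold exponent $\tfrac1{\beta+d+1}$ is exactly the solution of the balance $\delta^2|x|^{-(d+1+\beta)}=\delta$, and likewise $\tfrac1{d+2+\beta}$ comes from $\delta^2|x|^{-(d+2+\beta)}=\delta$ in the Hessian case. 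The main obstacle is this case analysis rather than any deep estimate: one must keep track, region by region, of which branch of each $q_{(d')}$ dominates each tail, the borderline configurations being those with $|x|$ near $\delta^{1/\beta}$ or $\delta^{-1/\beta}$, where the small-time contribution $\delta^2|x|^{-(d+1+\beta)}$ is maximal; once Lemma~\ref{012pq} is in hand, no further analytic input is needed.
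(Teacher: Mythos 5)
Your proposal is correct and follows essentially the same route as the paper's proof in Appendix \ref{error-G}: the uniform bound on $D^mG_\beta^\delta$ comes from the comparison $D_x^m p\asymp D_x^m q$ (Lemma \ref{012pq}, Remark \ref{mpq}) and time integration, while the two difference estimates come from splitting $G_\beta-G_\beta^\delta$ into the tails $\int_0^\delta+\int_{1/\delta}^\infty$, evaluating each via the two-regime profile of Remark \ref{equal-q} with crossover at $t=|x|^\beta$, and then the same region-by-region bookkeeping. In particular, your intermediate bounds $\delta^2|x|^{-(d+1+\beta)}$ and $|x|\,\delta^{(d+2-\beta)/\beta}$, and your identification of $\delta^{1/(\beta+d+1)}$ as the balance point $\delta^2|x|^{-(d+1+\beta)}=\delta$, coincide exactly with the paper's computation via \eqref{B.12}.
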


\begin{proof}
	We first prove the uniform bound on $\vert D^mG_\beta^\delta(x) \vert$. By Remark \ref{mpq}, we get
	\begin{equation*}
		\vert D^mG_\beta^\delta(x) \vert
		\leq \int_{\delta}^{1/\delta}
		\vert D_x^mp(t,x) \vert\, \mathrm{d}t
		\asymp \int_{\delta}^{1/\delta}
		\vert D_x^mq(t,x) \vert\, \mathrm{d}t
		\lesssim_m \dfrac{1}{\vert x \vert^{m+d-\beta}}.
	\end{equation*}
	Now we move to the bounds on $\nabla(G_\beta-G_\beta^\delta)$ and $D^2(G_\beta-G_\beta^\delta)$. Using \eqref{1p}, \eqref{2p} and Lemma \ref{012pq}, we get
	\begin{equation}\label{B.10}
		\begin{split}
			\vert \nabla(G_\beta-G_\beta^\delta)(x) \vert
			&\leq
			\bigg(\int_{0}^{\delta}+\int_{1/\delta}^{\infty}\bigg)
			\vert \nabla_xp(t,x) \vert\, \mathrm{d}t  \\
			&\asymp
			\bigg(\int_{0}^{\delta}+\int_{1/\delta}^{\infty}\bigg)
			p_{(d+2)}(t,x)\vert x\vert\, \mathrm{d}t\\
			&\asymp
			\bigg(\int_{0}^{\delta}+\int_{1/\delta}^{\infty}\bigg)
			q_{(d+2)}(t,x)\vert x\vert\, \mathrm{d}t.
		\end{split}
	\end{equation}
	Similarly,
	\begin{equation}\label{B.11}
		\vert D^2(G_\beta-G_\beta^\delta)(x) \vert
		\lesssim \bigg(\int_{0}^{\delta}+\int_{1/\delta}^{\infty}\bigg)
		\big(q_{(d+2)}(t,x)+q_{(d+4)}(t,x)\big)\vert x \vert^2\, \mathrm{d}t.
	\end{equation}
	With Remark \ref{equal-q}, a tedious calculation shows that
	\begin{equation}\label{B.12}
		\bigg(\int_{0}^{\delta}+\int_{1/\delta}^{\infty}\bigg)
		q(t,x)\,\mathrm{d}t\leq
		\begin{cases}
			\big(\frac{1}{2}+\frac{\beta}{d-\beta}\big)\vert x \vert^{\beta-d},& \vert x \vert\leq \delta^{1/\beta}\text{ or }\vert x \vert\geq \delta^{-1/\beta};  \\
			\frac{1}{2}\delta^2\vert x \vert ^{-(\beta+d)}+\frac{\beta}{d-\beta}\delta^{\frac{d-\beta}{\beta}},& \delta^{1/\beta}\leq \vert x \vert \leq \delta^{-1/\beta}.     \\
		\end{cases}
	\end{equation}
	Substituting \eqref{B.12} into \eqref{B.10} and \eqref{B.11} yields
	\begin{equation*}
		\vert \nabla(G_\beta-G_\beta^\delta)(x)\vert\lesssim
		\begin{cases}
			\big(\frac{1}{2}+c\big)\vert x \vert^{\beta-d-1},& \vert x \vert\leq \delta^{1/\beta}\text{ or }\vert x \vert\geq \delta^{-1/\beta};  \\
			\frac{1}{2}\delta^2\vert x \vert ^{-(\beta+d+1)}+c\delta^{\frac{d+2}{\beta}-1}
			\vert x \vert,& \delta^{1/\beta}\leq \vert x \vert \leq \delta^{-1/\beta},     
		\end{cases}
	\end{equation*}
	\begin{equation*}
		\vert D^2(G_\beta-G_\beta^\delta)(x) \vert
		\lesssim
		\begin{cases}
			\big(1+c+\tilde{c}\big)\vert x \vert^{\beta-d-2},& \vert x \vert\leq \delta^{1/\beta}\text{ or }\vert x \vert\geq \delta^{-1/\beta};  \\
			\delta^2\vert x \vert ^{-(\beta+d+2)}+c\delta^{\frac{d+2}{\beta}-1}
			\\\quad+\tilde{c}\delta^{\frac{d+4}{\beta}-1}\vert x \vert^2,& \delta^{1/\beta}\leq \vert x \vert \leq \delta^{-1/\beta},    
		\end{cases}
	\end{equation*}
	where the constants $c=\frac{\beta}{d+2-\beta},\, \tilde{c}=\frac{\beta}{d+4-\beta}$. Simplifying the above estimates, we can derive the assertions of the lemma. Indeed, for $\delta^{1/(\beta+d+1)}\leq\vert x \vert\leq\delta^{-1/\beta}$, we have
	\begin{equation*}
		\begin{split}
			\vert \nabla(G_\beta-G_\beta^\delta)(x) \vert
			\lesssim \delta^2\vert x \vert ^{-(\beta+d+1)}+\delta^{\frac{d+2}{\beta}-1}\vert x \vert
			\lesssim \delta+\delta^{\frac{d+1}{\beta}-1}\lesssim\delta^{1/2};
		\end{split}
	\end{equation*}
	for $\delta^{1/\beta}\leq\vert x \vert\leq\delta^{1/(\beta+d+1)}$, we have
	\begin{equation*}
		\begin{split}
			\vert \nabla(G_\beta-G_\beta^\delta)(x) \vert
			\lesssim \delta^2\vert x \vert ^{-(\beta+d+1)}+\delta^{\frac{d+2}{\beta}-1}\vert x \vert
			\lesssim
			\vert x \vert ^{\beta-d-1}+\vert x\vert
			\lesssim \vert x \vert ^{\beta-d-1}.
		\end{split}
	\end{equation*}
	Thus we have shown \eqref{1Gd}
	\begin{align*}
		\vert \nabla(G_\beta-G_\beta^\delta)(x) \vert
		&\lesssim_{\beta,d}\delta^{1/2}
		{\bf1}_{\{\delta^{1/(\beta+d+1)}\leq \vert x \vert\leq \delta^{-1/\beta}\}}   \\
		&\quad+\vert x \vert^{\beta-d-1}{\bf1}_{\{\vert x \vert\leq\delta^{1/(\beta+d+1)}\text{ or } \vert x \vert\geq \delta^{-1/\beta}\}}.
	\end{align*}
	The estimate \eqref{2Gd} can be obtained by the same trick.
\end{proof}

\section{Estimating the remainders}

In this section, we give an estimate of the remainder terms appearing in Lemma \ref{A-bound}. For this purpose, we need the following lemma.
\begin{lem}
	Let $1<\beta<2$ be fixed, we have the following bounds:
	\begin{align*}
		\big|\, \vert x \vert^{\beta-4}\mathrm{Rem}_f(\vert x \vert)\,\big| &\lesssim \vert x \vert^{\beta-2},
		\\
		\big\vert \nabla\big[\,\vert x \vert^{\beta-4}\mathrm{Rem}_f(\vert x \vert)\big]\,  \big\vert &\lesssim \vert x \vert^{\beta-3},
		\\
		\big\vert D^2\big[\,\vert x \vert^{\beta-4}\mathrm{Rem}_f(\vert x \vert)\big]\, \big\vert &\lesssim \vert x \vert^{\beta-4}.
	\end{align*}
\end{lem}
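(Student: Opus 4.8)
The plan is to reduce the three inequalities to one–variable bounds on $\mathrm{Rem}_f$ and its first two derivatives, and then to obtain those derivative bounds from a Gaussian (subordination) representation of the covariance $Q$, in the same spirit as the fractional heat–kernel analysis of Appendix \ref{error-G}. First I would exploit that $h(x):=|x|^{\beta-4}\mathrm{Rem}_f(|x|)$ is radial: writing $h(x)=H(|x|)$ with $H(R)=R^{\beta-4}\mathrm{Rem}_f(R)$, one has $|\nabla h(x)|=|H'(R)|$ and $|D^2h(x)|\lesssim|H''(R)|+|H'(R)|/R$ at $R=|x|$. Expanding $H',H''$ by the Leibniz rule and collecting powers of $R$, the three claimed bounds follow at once from the scalar estimates
\[
|\mathrm{Rem}_f(R)|\lesssim R^2,\qquad |\mathrm{Rem}_f'(R)|\lesssim R,\qquad |\mathrm{Rem}_f''(R)|\lesssim 1,\qquad R>0.
\]
The first is exactly the content of Lemma \ref{Q-structure}, so the real work lies in the two derivative bounds.

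To produce them I would use the Bessel–potential identity $\langle\xi\rangle^{-(2+2\alpha)}=\frac1{\Gamma(1+\alpha)}\int_0^\infty t^{\alpha}e^{-t}e^{-t|\xi|^2}\,\mathrm dt$, which turns $\hat Q(\xi)=\langle\xi\rangle^{-(2+2\alpha)}\big(I_2-\xi\otimes\xi/|\xi|^2\big)$ into a superposition of Gaussians and yields
\[
Q(x)=\frac1{\Gamma(1+\alpha)}\int_0^\infty t^{\alpha}e^{-t}\,\mathcal G_t(x)\,\mathrm dt,\qquad \mathcal G_t(x)=t^{-1}\mathcal G_1\big(x/\sqrt t\big),
\]
where $\mathcal G_1=\mathcal F^{-1}\big[e^{-|\xi|^2}(I_2-\xi\otimes\xi/|\xi|^2)\big]$. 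Since the symbol is bounded with Gaussian decay and smooth away from the origin, $\mathcal G_1$ is an even $C^\infty$ matrix field with $|D^m\mathcal G_1(y)|\lesssim\langle y\rangle^{-2-m}$ and, by evenness, $\nabla\mathcal G_1(0)=0$. Writing $B_L(R)=Q_{11}(Re_1)$, $B_N(R)=Q_{22}(Re_1)$, the radial derivatives $B_f^{(j)}(R)$ are the $\partial_1^{\,j}$–derivatives of the corresponding component; differentiating under the integral (legitimate because $\mathcal G_1$ is smooth) and performing the parabolic rescaling $t=R^2 s$ gives, for $j=1,2$,
\[
B_f^{(j)}(R)=\frac{R^{2\alpha-j}}{\Gamma(1+\alpha)}\int_0^\infty s^{\alpha-1-j/2}e^{-R^2 s}\,\Phi_{f,j}(s)\,\mathrm ds,\qquad \Phi_{f,j}(s)=\big(\partial_1^{\,j}\mathcal G_1\big)_{ff}\big(e_1/\sqrt s\big).
\]
Letting $R\to0$ (dominated convergence, $e^{-R^2 s}\to1$) identifies the coefficient of the leading singularity $R^{2\alpha-j}$ with that of $\frac{d^j}{dR^j}(-\beta_f R^{2\alpha})$, so that in
\[
\mathrm{Rem}_f^{(j)}(R)=\tfrac{d^j}{dR^j}\big(-\beta_f R^{2\alpha}\big)-B_f^{(j)}(R)=\frac{R^{2\alpha-j}}{\Gamma(1+\alpha)}\int_0^\infty s^{\alpha-1-j/2}\big(1-e^{-R^2 s}\big)\Phi_{f,j}(s)\,\mathrm ds
\]
the singular power cancels exactly and only the convergence factor $1-e^{-R^2 s}$ survives. (This identification remains valid for $j=2$, where $\mathrm{Rem}_f''$ stays bounded: being $O(1)$ it is negligible against the diverging $R^{2\alpha-2}$.)

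The remaining step is to bound these integrals by splitting at the natural scale $s=1/R^2$, using $|1-e^{-R^2 s}|\le\min(1,R^2 s)$ together with the decay of $\Phi_{f,j}$ inherited from $\mathcal G_1$: namely $\Phi_{f,j}(s)=O(s^{1+j/2})$ as $s\to0$ (from $|D^{j}\mathcal G_1(y)|\lesssim\langle y\rangle^{-2-j}$) and $\Phi_{f,j}(s)=O(s^{-1/2})$ for odd $j$, $O(1)$ for even $j$, as $s\to\infty$ (from the Taylor expansion at the origin and $\nabla\mathcal G_1(0)=0$). Each of the two pieces then contributes $O(R^{2-2\alpha})$, so $|\mathrm{Rem}_f^{(j)}(R)|\lesssim R^{2\alpha-j}R^{2-2\alpha}=R^{2-j}$ for $R\le1$; for $R\ge1$ the estimates are immediate, since $B_f$ and its derivatives are bounded there and $R^{2\alpha-j}\le R^{2-j}$. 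I expect the main obstacle to be precisely this cancellation of the $R^{2\alpha}$ singularity at the level of derivatives: one cannot differentiate the defining integral of $B_f$ twice in a naive way, because the differentiated integrand fails to be absolutely convergent, and it is the rescaling $t=R^2 s$ together with the evenness and polynomial decay of $\mathcal G_1$ that simultaneously make the subtraction of the explicit singular term legitimate and render the resulting $s$–integrals convergent.
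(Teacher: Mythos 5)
Your route is genuinely different from the paper's. The paper outsources the proof to \cite{CogMau} (Lemma A.1 there), where $\mathrm{Rem}_f$ is written from the start as an absolutely convergent integral against the difference between the Kraichnan symbol $\langle\xi\rangle^{-2-2\alpha}$ and its pure-power counterpart $|\xi|^{-2-2\alpha}$; the singular term $-\beta_f R^{2\alpha}$ is thus cancelled at the level of the integrand, and the three bounds follow by differentiating under the integral sign with elementary estimates on $1-\cos$ and $\sin$. Your proposal instead represents $Q$ through the subordination identity $\langle\xi\rangle^{-2-2\alpha}=\frac{1}{\Gamma(1+\alpha)}\int_0^\infty t^\alpha e^{-t}e^{-t|\xi|^2}\,\mathrm{d}t$ and rescales. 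The reduction to the scalar bounds $|\mathrm{Rem}_f(R)|\lesssim R^2$, $|\mathrm{Rem}_f'(R)|\lesssim R$, $|\mathrm{Rem}_f''(R)|\lesssim 1$, the subordination formula, the rescaling $t=R^2s$, and the final splitting at $s=1/R^2$ are all correct, and for $R\ge 1$ the claims are indeed immediate.

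There is, however, a genuine gap at the crux of your argument: the coefficient identification. Your exact formula gives $B_f^{(j)}(R)=C_{f,j}R^{2\alpha-j}+E_j(R)$ with $C_{f,j}=\frac{1}{\Gamma(1+\alpha)}\int_0^\infty s^{\alpha-1-j/2}\Phi_{f,j}(s)\,\mathrm{d}s$ and $|E_j(R)|\lesssim R^{2-j}$; the lemma is then \emph{equivalent} to the identity $C_{f,j}=-\beta_f\,2\alpha(2\alpha-1)\cdots(2\alpha-j+1)$, because any mismatch leaves a term $\asymp R^{2\alpha-j}$ in $\mathrm{Rem}_f^{(j)}$, which is not $O(R^{2-j})$ since $2\alpha<2$. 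But your justification of this identity (that $\mathrm{Rem}_f^{(j)}$ is ``negligible against the diverging $R^{2\alpha-j}$'') presupposes exactly the bound being proven, so as written the step is circular. The gap is repairable within your framework: integrate the $j=1$ formula from $0$ to $R$ and compare with the expansion $B_f(R)=\frac{\pi}{2\alpha}-\beta_f R^{2\alpha}+O(R^2)$ already supplied by Lemma \ref{Q-structure}; dividing by $R^{2\alpha}$ and letting $R\to 0$ forces $C_{f,1}=-2\alpha\beta_f$, and integrating the $j=2$ formula from $\epsilon$ to $R$ and matching the $\epsilon^{2\alpha-1}$ and $R^{2\alpha-1}$ terms forces $C_{f,2}=(2\alpha-1)C_{f,1}$. (Equivalently, first subtract the limit value $\Phi_{f,0}(\infty)=(\mathcal G_1)_{ff}(0)=\pi/2$ at level $j=0$, identify $\beta_f$ once from the known expansion, and only then differentiate that representation in $R$.) A secondary weakness: the decay $|D^m\mathcal G_1(y)|\lesssim\langle y\rangle^{-2-m}$ does not follow from the symbol being ``bounded with Gaussian decay and smooth away from the origin'' (that class contains symbols whose inverse transforms have no polynomial decay at all); it relies on the specific fact that $I_2-\xi\otimes\xi/|\xi|^2$ is homogeneous of degree zero and smooth on the circle, so the singularity at $\xi=0$ is of conormal type. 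The claim is true for this symbol, but it must be stated and proved as such.
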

The proofs are almost identical to \cite[Lemma A.1]{CogMau}, so we omit the details here.

\begin{lem}\label{A.2}
	Let $1<\beta<2$ be fixed. Suppose $g:\mathbb{R}^2\rightarrow\mathbb{R}$ is a Borel measurable function, which has support in $\bar{B}_2(0)$ and is $C^2$ on $\mathbb{R}^2\backslash \{0\}$, and assume that for all $x\in \mathbb{R}^2\backslash \{0\}$,
	\begin{equation*}
		\vert g(x) \vert\lesssim\vert x \vert^{\beta-2},\quad
		\vert \nabla g(x)  \vert \lesssim \vert x \vert^{\beta-3},\quad
		\vert D^2g(x)  \vert \lesssim \vert x \vert^{\beta-4}.
	\end{equation*}
	Then for all $2-\beta<\epsilon<1$, we have
	\begin{equation*}
		\vert \hat{g}(\xi) \vert \lesssim_\epsilon \langle \xi \rangle^{-2+\epsilon},
		\quad  \text{for all } \xi \in \mathbb{R}^2.
	\end{equation*}
\end{lem}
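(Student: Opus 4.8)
The plan is to estimate $\hat g(\xi)$ directly from its integral representation, splitting the domain at the scale $r=1/|\xi|$ and integrating by parts twice on the outer region to exploit the oscillation of the phase. First I would record the easy facts. Since $1<\beta<2$, the weight $|x|^{\beta-2}$ is locally integrable in $\R^2$ (as $\beta-2>-2$), so the hypothesis $|g(x)|\lesssim|x|^{\beta-2}$ together with $\mathrm{supp}(g)\subset\bar B_2(0)$ gives $g\in L^1$ and $\|\hat g\|_{L^\infty}\le\|g\|_{L^1}\lesssim1$. This already yields the claim on the bounded range $|\xi|\le1$, where $\langle\xi\rangle^{-2+\epsilon}\asymp1$. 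It therefore remains to prove the decay $|\hat g(\xi)|\lesssim|\xi|^{-\beta}$ for $|\xi|\ge1$; this is in fact stronger than the asserted $|\xi|^{-2+\epsilon}$, precisely because the hypothesis $\epsilon>2-\beta$ forces $-\beta\le-2+\epsilon$, so the sharp bound implies the stated one.

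Fix $|\xi|\ge1$ and set $r=1/|\xi|$. For the inner part I would simply bound $\big|\int_{|x|\le r}g(x)e^{-2\pi i x\cdot\xi}\,\d x\big|\le\int_{|x|\le r}|g|\lesssim\int_0^r\rho^{\beta-1}\,\d\rho\lesssim r^{\beta}=|\xi|^{-\beta}$. For the outer part I would use the identity $e^{-2\pi i x\cdot\xi}=-\tfrac{1}{4\pi^2|\xi|^2}\Delta_x e^{-2\pi i x\cdot\xi}$ and Green's second identity on $\{|x|>r\}$. Because $g$ is $C^2$ away from the origin and supported in $\bar B_2$, the functions $g$ and $\nabla g$ are continuous up to $\{|x|=2\}$ and vanish there, so no outer boundary term survives and the only boundary contribution comes from the sphere $\{|x|=r\}$ (with inward normal). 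Moving the Laplacian onto $g$, the interior term is $\int_{|x|>r}|\Delta g|\lesssim\int_r^2\rho^{\beta-3}\,\d\rho\lesssim r^{\beta-2}$, where the exponent $\beta-3<-1$ makes the lower endpoint dominant; the two boundary integrals over $\{|x|=r\}$ are controlled, using $|g|\lesssim r^{\beta-2}$, $|\nabla g|\lesssim r^{\beta-3}$, $|\nabla_x e^{-2\pi i x\cdot\xi}|=2\pi|\xi|$ and surface measure $\asymp r$, by $|\xi|\,r^{\beta-1}$ and $r^{\beta-2}$ respectively. Carrying the prefactor $|\xi|^{-2}$, the outer part is thus $\lesssim|\xi|^{-2}r^{\beta-2}+|\xi|^{-1}r^{\beta-1}$, which at $r=|\xi|^{-1}$ equals $|\xi|^{-\beta}$. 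Adding the inner estimate gives $|\hat g(\xi)|\lesssim|\xi|^{-\beta}$, and hence $|\hat g(\xi)|\lesssim\langle\xi\rangle^{-2+\epsilon}$ for all $\xi\in\R^2$.

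The main obstacle, and the only point requiring care, is the justification of the double integration by parts on the exterior region: I must verify that $g,\nabla g,D^2g$ are genuinely continuous up to $\{|x|=2\}$ and vanish there (so that the outer boundary term drops), and that the inner sphere integrals are treated with the correct normal orientation. Everything else is routine bookkeeping of annular integrals once the scale $r=|\xi|^{-1}$ is fixed. I note that the upper restriction $\epsilon<1$ in the statement is not actually needed for this argument, since the sharp decay $|\xi|^{-\beta}$ dominates every $\langle\xi\rangle^{-2+\epsilon}$ with $\epsilon>2-\beta$; it is kept only to place the conclusion in the range used in Lemma \ref{A-bound}.
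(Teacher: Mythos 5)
Your proof is correct, but it follows a genuinely different route from the paper's. The paper argues through function spaces: it first shows $g\in W^{1+\gamma,p}$ for $0<\gamma<\beta-1$ and suitable $1<p<2$, by interpolating the pointwise bounds on $\nabla g$ and $D^2 g$ inside the Gagliardo seminorm; it then passes to the Bessel potential space $H^{1+\gamma,p}$, applies the Hausdorff--Young inequality to get $\langle\cdot\rangle^{1+\gamma}\hat g\in L^{p'}$, upgrades this to $\langle\cdot\rangle^{1+\gamma}\hat g\in W^{1,p'}$ using that $xg(x)$, $\Delta(xg(x))$ and $\nabla g$ lie in $L^p$, and concludes boundedness of $\langle\cdot\rangle^{1+\gamma}\hat g$ by the Sobolev embedding $W^{1,p'}(\R^2)\hookrightarrow L^\infty$, giving the claim with $\epsilon=1-\gamma$; the restriction $\epsilon<1$ is exactly the condition $\gamma>0$ that this method needs. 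You instead estimate the oscillatory integral directly: split at the scale $r=|\xi|^{-1}$, bound the inner piece by local integrability of $|x|^{\beta-2}$, and apply Green's second identity on the exterior annulus, where the vanishing of $g$ and $\nabla g$ on $\{|x|=2\}$ (which you correctly justify from the support condition plus $C^2$ regularity across the sphere) kills the outer boundary terms and the inner boundary terms are controlled by the pointwise hypotheses. Your bookkeeping is right: the bulk term gives $|\xi|^{-2}r^{\beta-2}$, the boundary terms give $|\xi|^{-1}r^{\beta-1}+|\xi|^{-2}r^{\beta-2}$, and at $r=|\xi|^{-1}$ everything collapses to $|\xi|^{-\beta}$. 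Your approach buys two things: it is elementary and self-contained (no fractional Sobolev spaces, no Hausdorff--Young, no embedding theorems), and it yields the sharper, $\epsilon$-uniform decay $|\hat g(\xi)|\lesssim\langle\xi\rangle^{-\beta}$ — which is optimal, as the model case $g(x)=|x|^{\beta-2}\varphi(x)$ shows — so that, as you note, the upper restriction $\epsilon<1$ in the statement is an artifact of the paper's method rather than a genuine necessity. What the paper's softer argument buys in exchange is that it avoids all boundary-term bookkeeping and stays within the functional-analytic framework used elsewhere in the paper.
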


\begin{proof}
	First we claim that the $W^{1+\gamma,p}$ norm of $g$ is finite for $0<\gamma<\beta-1$ and $1<p<\frac{2}{3-\beta+\gamma}<2$. Here the $W^{1+\gamma,p}$ norm is defined as
	\begin{equation*}
		\Vert g \Vert_{W^{1+\gamma,p}}^p= \Vert g \Vert_{L^p}^p
		+ \Vert \nabla g \Vert_{L^p}^p
		+\iint\limits_{\mathbb{R}^2\times \mathbb{R}^2}
		\dfrac{\vert\nabla g(x)-\nabla g(y)\vert^p}{\vert x-y \vert^{2+\gamma p}}\,\mathrm{d}x\mathrm{d}y.
	\end{equation*}
	Obviously the $L^p$ norms of $g$ and $\nabla g$ are finite. For the last term, on the one hand we exploit the bound on $\nabla g$ and get,
	\begin{equation*}
		\dfrac{\vert\nabla g(x)-\nabla g(y)\vert^p}{\vert x-y \vert^{2+\gamma p}}
		\lesssim\dfrac{1}{(\vert x \vert \wedge \vert y \vert)^{(3-\beta)p}
			\,\vert x-y \vert^{2+\gamma p}}=:I_1.
	\end{equation*}
	On the other hand, take a $C^1$-path $\eta: [0,1]\rightarrow \mathbb{R}^2$ with $\eta(0)=y,\, \eta(1)=x, \min\limits_{s\in [0,1]}\vert \eta(s) \vert \gtrsim \vert x \vert \wedge \vert y \vert$ and $\max\limits_{s\in [0,1]}\vert \eta'(s) \vert \lesssim \vert x-y \vert$; then exploiting the bound on $D^2g$, we get
	\begin{align*}
		\dfrac{\vert\nabla g(x)-\nabla g(y)\vert^p}{\vert x-y \vert^{2+\gamma p}}
		&\leq \bigg(\int_{0}^{1}
		\vert D^2g(\eta(s)) \vert\,
		\vert \eta'(s) \vert \,\mathrm{d}s\bigg)^p  \frac{1}{\vert x-y \vert^{2+\gamma p}} \\
		&\lesssim \dfrac{\max\limits_{s\in [0,1]}\vert \eta'(s) \vert^p}{\min\limits_{s\in [0,1]}\vert \eta(s) \vert^{(4-\beta)p}}
		\cdot \frac{1}{\vert x-y \vert^{2+\gamma p}} \\
		&\lesssim
		\dfrac{1}{(\vert x \vert \wedge \vert y \vert)^{(4-\beta)p}
			\,\vert x-y \vert^{2-(1-\gamma) p}}=:I_2.
	\end{align*}
	Interpolating between the above two estimates, we get, for all $0<\lambda<1$,
	\begin{align*}
		\dfrac{\vert\nabla g(x)-\nabla g(y)\vert^p}{\vert x-y \vert^{2+\gamma p}} \lesssim
		I_1^\lambda I_2^{1-\lambda}
		= \frac{1}{\vert x-y \vert^{2+(\gamma+\lambda-1) p}}
		\bigg( \dfrac{1}{\vert x \vert^{p(4-\beta-\lambda)}}
		+\dfrac{1}{\vert y \vert^{p(4-\beta-\lambda)}} \bigg).
	\end{align*}
	Since $1<p<\frac{2}{3-\beta+\gamma}<2$, we can choose $0<\lambda<1 -\gamma$ such that $2+(\gamma+\lambda-1)p<2$ and $p(4-\beta-\lambda)<2$, making  the right-hand side above integrable. Therefore, $g$ has finite  $W^{1+\gamma,p}$ norm for $0<\gamma<\beta-1$ and $1<p<\frac{2}{3-\beta+\gamma}<2$.
	Then we have (see \cite[Subsection 2.5]{Triebel1983})
	\begin{equation*}
		\Vert g \Vert_{H^{1+\gamma,p}}:=\Vert \mathcal{F}^{-1} ( \langle  \cdot \rangle^{1+\gamma}\hat{g} ) \Vert_{L^p}
		\lesssim
		\Vert g \Vert_{W^{1+\gamma,p}} <\infty.
	\end{equation*}
	Thus by Hausdorff-Young inequality, we have
	\begin{equation*}
		\langle  \cdot \rangle^{1+\gamma}\hat{g}\in L^{p'},\quad \text{where }  \frac{1}{p}+\frac{1}{p'}=1.
	\end{equation*}
	
	Now we will show $\langle \cdot \rangle^{1+\gamma}\hat{g}$ is actually in $W^{1,p'}$, then by Sobolev embedding, $\langle \cdot \rangle^{1+\gamma}\hat{g}$ is bounded, which completes the proof with $\epsilon=1-\gamma$.
	Note that $g$ has compact support, so $\hat{g}$ is smooth and
	\begin{equation*}
		\vert \nabla (\langle \xi \rangle^{1+\gamma}\hat{g}(\xi)) \vert
		\lesssim \langle \xi \rangle^{1+\gamma} \vert \nabla\hat{g}(\xi) \vert
		+ \langle \xi \rangle^{\gamma} \vert \hat{g}(\xi) \vert
		\lesssim \langle \xi \rangle^{2} \vert \nabla\hat{g}(\xi) \vert
		+ \langle \xi \rangle \vert \hat{g}(\xi) \vert.
	\end{equation*}
	For the first term, observe that $xg(x)$ and $\Delta(xg(x))$ are in $L^p$, hence
	\begin{equation*}
		\langle \xi \rangle^{2} \vert \nabla\hat{g}(\xi) \vert\asymp
		\big\vert   \mathcal{F}\big[(I-\Delta)(xg(x)) \big] (\xi)  \big\vert
		\in L^{p'}.
	\end{equation*}
	Similarly, we have
	\begin{equation*}
		\langle \xi \rangle \vert \hat{g}(\xi) \vert \leq \vert \hat{g}(\xi) \vert + \vert \xi\, \hat{g}(\xi) \vert = \vert \hat{g}(\xi) \vert+ \vert \mathcal{F}(\nabla g)(\xi) \vert\in L^{p'}.
	\end{equation*}
	Consequently, we have proved that for $0<\gamma<\beta-1$ and $1<p<\frac{2}{3-\beta+\gamma}<2$, $\langle \cdot \rangle^{1+\gamma}\hat{g}$ is in $W^{1,p'}$, so by the Sobolev embedding on $\mathbb{R}^2$, $\langle \cdot \rangle^{1+\gamma}\hat{g}$ is bounded.
\end{proof}

\bigskip
		
\noindent \textbf{Acknowledgements.}
The second author is grateful to the financial supports of the National Key R\&D Program of China (No. 2020YFA0712700), the National Natural Science Foundation of China (Nos. 11931004, 12090010, 12090014), and the Youth Innovation Promotion Association, CAS (Y2021002).

		\phantomsection

\begin{thebibliography}{99}  \setlength{\itemsep}{+1.5pt}
		
		\bibitem{ABCDGJK} D. Albritton, E. Bru\'e, M. Colombo, C. De Lellis, V. Giri, M. Janisch, H. Kwon. Instability and nonuniqueness for the $2d $ Euler equations in vorticity form, after M. Vishik. Volume 215, Annals of Mathematics Studies, Princeton University Press, 2024.
		
		\bibitem{AlBrCo} D. Albritton, E. Bru\'e, M. Colombo. Non-uniqueness of Leray solutions of the forced Navier--Stokes equations. \emph{Ann. of Math. (2)} \textbf{196} (2022), no. 1, 415--455.
		
\bibitem{BGM24} M. Bagnara, L. Galeati, M. Maurelli. Regularization by rough Kraichnan noise for the generalised SQG equations, arXiv:2405.12181v1.
		
		\bibitem{Bahouri2011FourierAA} H. Bahouri, J.-Y. Chemin, R. Danchin. \emph{Fourier analysis and nonlinear partial dfferential equations}. Springer Berlin, Heidelberg, 2011.
		
		\bibitem{2004Bertoinbook} J. Bertoin, F. Martinelli, Y. Peres. \emph{Subordinators: examples and applications}, pages 1--91. Springer Berlin Heidelberg, 2004.
		
		\bibitem{Bogdan2007EstimatesOH} K. Bogdan, T. Jakubowski. Estimates of heat kernel of fractional Laplacian perturbed by gradient operators. \emph{Commun. Math. Phys.} \textbf{271} (2007), 179--198.
		
		\bibitem{BreMur} A. Bressan, R. Murray. On self-similar solutions to the incompressible Euler equations. \emph{J. Differential Equations} \textbf{269} (2020), no. 6, 5142--5203.
		
		\bibitem{BreShe} A. Bressan, W. Shen. A posteriori error estimates for self-similar solutions to the Euler equations. \emph{Discrete Contin. Dyn. Syst.} \textbf{41} (2021), no. 1, 113--130.
		
		\bibitem{BSV19} T. Buckmaster, S. Shkoller, V. Vicol. Nonuniqueness of weak solutions to the SQG equation. \emph{Comm. Pure Appl. Math.} \textbf{72} (2019), no. 9, 1809--1874.
		
		\bibitem{ChCoWu} D. Chae, P. Constantin, J. Wu. Inviscid models generalizing the two-dimensional Euler and the surface quasi-geostrophic equations. \emph{Arch. Ration. Mech. Anal.} \textbf{202} (2011), no. 1, 35--62.
		
		\bibitem{CogMau} M. Coghi, M. Maurelli. Existence and uniqueness by Kraichnan noise for 2D Euler equations with unbounded vorticity. arXiv:2308.03216v2 (2023).
		
		\bibitem{CMT94} P. Constantin, A. Majda, E. G. Tabak. Formation of strong fronts in the 2-D quasigeostrophic thermal
		active scalar. \emph{Nonlinearity} \textbf{7} (1994), 1495--1533.
		
		\bibitem{CFMR05} D. C\'{o}rdoba, M. Fontelos, A. Mancho, J. Rodrigo. Evidence of singularities for a family of contour dynamics equations. \emph{Proc. Natl. Acad. Sci. USA} \textbf{102} (2005), no. 17, 5949--5952.
		
		\bibitem{DaPrato1992} G. Da Prato, J. Zabczyk. Stochastic equations in infinite dimensions. Cambridge University Press, 1992.
		
		\bibitem{DEBUSSCHE20111123} A. Debussche, N. Glatt-Holtz, R. Temam. Local martingale and pathwise solutions for an abstract fluids model. \emph{Physica D: Nonlinear Phenomena}, \textbf{240} (2011), no. 14, 1123--1144.
		
		\bibitem{FlaBook} F. Flandoli. Random perturbation of PDEs and fluid dynamic models. \emph{\'{E}cole d'\'{E}t\'{e} de Probabilit\'{e}s de Saint-Flour XL-2010}, Vol. 2015, Springer Science \& Business Media (2011).
		
		\bibitem{Fla15} F. Flandoli. An open problem in the theory of regularization by noise for nonlinear PDEs. Contained in: Stochastic Geometric Mechanics. Springer, \emph{Cham}, (2015), 13--29.
		
		\bibitem{FGP11} F. Flandoli, M. Gubinelli, E. Priola. Full well-posedness of point vortex dynamics corresponding to stochastic 2D Euler equations. \emph{Stochastic Process. Appl.} \textbf{121} (2011), 1445--1463.
		
		\bibitem{FlLu20} F. Flandoli, D. Luo. Convergence of transport noise to Ornstein-Uhlenbeck for 2D Euler equations under the enstrophy measure. \emph{Ann. Probab.} \textbf{48} (2020), no. 1, 264--295.
		
		\bibitem{FS19} F. Flandoli, M. Saal. MSQG equations in distributional spaces and point vortex approximation. \emph{J. Evol. Equ.} \textbf{19} (2019), no. 4, 1071--1090.

\bibitem{FP22} F. Flandoli, U. Pappalettera. From additive to transport noise in 2D fluid dynamics. \emph{Stoch. Partial Differ. Equ. Anal. Comput.} \textbf{10} (2022), no. 3, 964--1004.

		\bibitem{GalLuo23} L. Galeati, D. Luo. Weak well-posedness by transport noise for a class of 2D fluid dynamics equations. arXiv:2305.08761 (2023).

\bibitem{Gan08} F. Gancedo. Existence for the $\alpha$-patch model and the QG sharp front in Sobolev spaces. \emph{Adv. Math.} \textbf{217} (2008), no. 6, 2569--2598.

\bibitem{GelRom20} C. Geldhauser, M. Romito. Point vortices for inviscid generalized surface quasi-geostrophic models. \emph{Discrete Contin. Dyn. Syst. Ser. B} \textbf{25} (2020), no. 7, 2583--2606.
		
		\bibitem{Gess} B. Gess. Regularization and well-posedness by noise for ordinary and partial differential equations, in \emph{Stochastic partial differential equations and related fields}, Vol. 229 (2018), pp. 43--67, Springer Proc. Math. Stat.
		
		\bibitem{HPGS95} I. M. Held, R. T. Pierrehumbert, S. T. Garner, K. L. Swanson. Surface quasi-geostrophic dynamics. \emph{J. Fluid Mech.} \textbf{282} (1995), 1--20.

\bibitem{Holm2015} D. D. Holm. Variational principles for stochastic fluid dynamics. \emph{Proc. A.} \textbf{471} (2015), no. 2176, 20140963, 19 pp.
		
		\bibitem{Kallenberg} O. Kallenberg, \emph{Foundations of modern probability}. Probability Theory and Stochastic Modelling, 99. \emph{Springer, Cham}, 2021.

\bibitem{KYZ17} A. Kiselev, Y. Yao, A. Zlato\v{s}. Local regularity for the modified SQG patch equation. \emph{Comm. Pure Appl. Math.} \textbf{70} (2017), no. 7, 1253--1315.

\bibitem{Kraichnan} R. H. Kraichnan. Small-scale structure of a scalar field convected by turbulence. \emph{Phys. Fluids} \textbf{11} (1968), no. 5, 945--953.

\bibitem{Kra2} R. H. Kraichnan. Anomalous scaling of a randomly advected passive scalar. \emph{Phys. Rev. Lett.} \textbf{72} (1994), no. 7, 1016--1019.
		
		\bibitem{Kwasnicki+2017+7+51} M. Kwa\'{s}nicki. Ten equivalent definitions of the fractional Laplace operator. \emph{Fractional Calculus and Applied Analysis}. \textbf{20} (2017), no.1, 7--51.
		
		\bibitem{LeGall2016} J. F. Le Gall. \emph{Brownian motion, martingales, and stochastic calculus}. Springer Cham, 2016.

\bibitem{LeJan-Raim02} Y. Le Jan, O. Raimond. Integration of Brownian vector fields. \emph{Ann. Probab.} \textbf{30} (2002), No. 2, 826--873.
		
		\bibitem{LS21} D. J. Luo, M. Saal. Regularization by noise for the point vortex model of mSQG equations. \emph{Acta Math. Sin. (Engl. Ser.)} \textbf{37} (2021), no. 3, 408--422.
		
		\bibitem{LZ21} D. Luo, R. Zhu. Stochastic mSQG equations with multiplicative transport noises: white noise solutions and scaling limit. \emph{Stochastic Process. Appl.} \textbf{140} (2021), 236--286.
		
		\bibitem{Mar08} F. Marchand. Existence and regularity of weak solutions to the quasi-geostrophic equations in the spaces $L^p$ or $\dot H^{-1/2}$. \emph{Commun. Math. Phys.} \textbf{277} (2008), 45--67.
		
		\bibitem{Ped87} J. Pedlosky. Geophysical Fluid Dynamics. Second Edition, Springer-Verlag, New York, 1987.
		
		\bibitem{Res95} S. Resnick. Dynamical Problems in Nonlinear Advective Partial Differential Equations. Ph. D. thesis University of Chicago, 1995.
		
		\bibitem{Stein1971} E. M. Stein. \emph{Singular integrals and differentiability properties of functions (PMS-30), Volume 30}. Princeton University Press, 1971.
		
		\bibitem{Triebel1983} H. Triebel. Function spaces on domains. \emph{Theory of Function Spaces}, pages 188-211, 1983.
		
		\bibitem{Vishik1} M. Vishik. Instability and non-uniqueness in the Cauchy problem for the Euler equations of an ideal incompressible fluid. Part I. arxiv:1805.09426 (2018).
		
		\bibitem{Vishik2} M. Vishik. Instability and non-uniqueness in the Cauchy problem for the Euler equations of an ideal incompressible fluid. Part II. arxiv:1805.09440 (2018).
		

		\bibitem{Yud63} V. I. Yudovich. Non-stationary flows of an ideal incompressible fluid (Russian). \emph{\v{Z}. Vy\v{c}isl. Mat. i Mat. Fiz.} \textbf{3} (1963), 1032--1066.
	\end{thebibliography}
	\end{sloppypar}

\end{document}